\documentclass[12pt]{amsart}

\usepackage[british]{babel}  
\usepackage[top=1.5cm]{geometry}    
\usepackage{mathrsfs}
\usepackage{bbm}					
\usepackage[parfill]{parskip} 
 \usepackage{graphicx}				
\usepackage{breqn}
\usepackage{bm}	
\usepackage{amsmath,amssymb,amsthm,eucal,verbatim}
\usepackage{hyperref,amsbsy}
\usepackage{graphicx}
\usepackage{epstopdf}

\usepackage{xcolor}

\usepackage{enumitem}

\usepackage{mathtools}

\allowdisplaybreaks

\advance \topmargin by \headheight
\advance \topmargin by-\headsep
\evensidemargin \oddsidemargin
\theoremstyle{plain}
\newtheorem{thm}{Theorem}[section]

\newtheorem{cor}[thm]{Corollary}
\newtheorem{lem}{Lemma}[subsection]
\newtheorem*{thm*}{Theorem}
\newtheorem*{lem*}{Lemma}
\newtheorem*{cor*}{Corollary}

\newtheorem*{rem*}{Remark}
\newtheorem{prop}[thm]{Proposition}

\theoremstyle{definition}

\newcommand{\be}{\begin{equation}}
\newcommand{\ee}{\end{equation}}

\newcommand{\ve}{\varepsilon}

\makeatletter
\newcommand*{\bigss}[1]{{\hbox{$\left#1\vbox to11\p@{}\right.\n@space$}}}
\makeatother

\newcommand{\sdfrac}[2]{\mbox{\small$\displaystyle\frac{#1}{#2}$}}

\title[Rankin-Selberg $L$-functions in the prime-power level aspect]{Twisted Moments of Rankin-Selberg $L$-functions in the Prime-Power Level Aspect}

\thanks{This work was partially funded by the PIMS Collaborative Research Group {\it $L$-functions in Analytic Number Theory}. Fatma \c{C}i\c{c}ek was supported by a Pacific Institute for the Mathematical Sciences (PIMS) postdoctoral fellowship at the University of Northern British Columbia. The research of Alia Hamieh is supported by an NSERC discovery grant.}

\keywords{\noindent Moments of $L$-functions, modular forms, Rankin-Selberg convolutions.}

\subjclass[2010]{Primary 11F11, 11F66; Secondary 11F30}

\author[F. \c{C}\.{\i}\c{c}ek and A. Hamieh]{Fatma \c{C}\.{\i}\c{c}ek and Alia Hamieh}
\address{\.{I}stanbul, 34528, T\tiny{\"{U}}RK\.{I}YE}
\email{cicek.ftm@gmail.com}
\address{University of Northern British Columbia\\ Department of Mathematics and Statistics \\ 3333 University Way\\ Prince George, BC\ V2N 4Z9\\ Canada}
\email{alia.hamieh@unbc.ca}

\begin{document}

\begin{abstract}
We compute the twisted first and second moments of the shifted central values of the Rankin-Selberg $L$-functions given by $L\left(\frac12+\omega, f\otimes g\right)$ as $f$ varies over primitive forms of prime power level $p^\nu$ with $\nu \geq 3$. Here $\omega$ is a bounded shift and $g$ is a fixed primitive form of level relatively prime to $p$. 

\end{abstract}

\maketitle


\section{Introduction}

Let $S_k(N)$ be the space of cusp forms of even weight $k\geq 2$ with respect to the congruence subgroup $\Gamma_0(N)$. Any $f\in S_k(N)$ admits a Fourier expansion at infinity
\begin{equation*}
f(z)=\sum_{n\ge 1} \lambda_f(n)n^{\frac{k-1}{2}}\,e(nz),\quad e(z):=e^{2\pi i z}.
\end{equation*}
The space $S_k(N)$ is equipped with the Petersson inner product
\begin{equation*}
\langle f,g\rangle = \int_{F_0(N)} f(z)\overline{g(z)}\,y^{k}\,\frac{dx\,dy}{y^{2}},
\end{equation*}
where $F_0(N)$ is a fundamental domain for the action of $\Gamma_0(N)$ on the upper half-plane $\mathbb{H}$. The Hecke operators $T_n$, with $(n,N)=1$, are normal with respect to the inner product. One can thus find an orthogonal basis of $S_k(N)$, we denote it by $H_k(N)$, formed of eigenvectors of all the $\{T_n,(n,N)=1\}.$ For $f$ a Hecke eigenvector of $T_n$, $\psi_f(n)n^{(k-1)/2}$
denotes its eigenvalue.

According to Atkin-Lehner theory \cite{Atkin-Lehner}, the space $S_k(N)$ can be decomposed
into two subspaces
\begin{equation*}
S_k(N)=S_k^{\mathrm{new}}(N) \oplus S_k^{\mathrm{old}}(N),
\end{equation*}
where $S_k^{\mathrm{old}}(N)$ is the space of old forms consisting of cusp forms of level $N$ coming from lower levels as specified by
\begin{equation*}
S_k^{\mathrm{old}}(N)=\operatorname{span}\bigl\{\, f(qz):\, qN'\mid N,\ N'<N,\ f(z)\in S_k(N')\,\bigr\}.
\end{equation*}

The space of newforms $S_k^{\mathrm{new}}(N)$ is defined as the orthogonal complement of $S_k^{\mathrm{old}}(N)$ with respect to the Petersson inner product. This space is stable under the action of Hecke operators, and it is shown in  \cite{Atkin-Li} that these operators can be simultaneously diagonalized. A newform $f$ is called primitive if $\lambda_f(1)=1$; in this case, one has $\psi_f(n)=\lambda_f(n)$ for all $n$. We denote by \(H_k^{*}(N)\) the set of primitive newforms, which forms an orthogonal basis of $S_k^{\mathrm{new}}(N)$. Moreover, for every $f\in H_k^{*}(N)$, the Hecke eigenvalues $\lambda_f(n)$ are real for all $n\in\mathbb{N}$.

Let $k$ and $k'$ be two even positive integers, and let $f\in H_{k}^*(N)$ and $g\in H_{k'}^*(D)$ be primitive forms. We assume that $N$ and $D$ are relatively prime. Given $f$ and $g$ as such, one defines the $L$-series for the Rankin-Selberg convolution as 
\[
L(s, f\otimes g)=\zeta_{ND}(2s)\sum_{n\geq 1}\frac{\lambda_f(n)\lambda_g(n)}{n^{s}},
\] 
where 
\[
\zeta_{ND}(2s)=\sum_{\substack{d\geq 1 \\ (d, ND)=1} } \frac{1}{d^{2s}}=\zeta(2s)\prod_{p|ND}(1-p^{-2s}).
\] 
This series converges absolutely for $\Re(s)>1$.  Setting \[
    \Lambda(s, f\otimes g) = \left(\frac{ND}{4\pi^2}\right)^s\Gamma_g(s) L(s, f \otimes g),
    \]
where
    \begin{equation}\label{eq:Gammag}
    \Gamma_g\left(s\right)
    =\Gamma\left(s+\frac{|k-k'|}{2}\right)
    \Gamma\left(s+\frac{k+k'}{2}-1\right) ,
    \end{equation}
we know that $\Lambda(s, f\otimes g)$ has analytic continuation to $\mathbb{C}$ with a pole at $s=1$ (when $f=g$) and satisfies the functional equation  
\begin{equation}\label{functional-equation}
 \Lambda(s, f\otimes g)=\Lambda(1-s, f\otimes g).
 \end{equation}
The reader is referred to  \cite[Section~4]{KMV RS} for more details.

 In this paper, we study the problem of evaluating the first and second moments of the family
\begin{equation}\label{eqn:family}
\{ L(s,f\otimes g):  f\in H_{k}^*(p^\nu) \},
\end{equation}
where $p$ is a prime and $\nu\geq 3$.  In contrast to the more extensively studied families corresponding to primitive forms of prime or square-free level, $L$-functions of primitive forms of prime power level provide a more localized framework where one can study moments while isolating the effect of increasingly deep ramification at a fixed prime. This perspective is also related to themes appearing in $p$-adic variation, where one studies arithmetic objects as the ramification at a fixed prime varies. While the present work is entirely analytic in nature, the prime power level nonetheless is a natural setting for investigating such fixed prime variation. One distinctive feature here is the richer structure of the space $S_k(p^\nu)$ that brings about contributions from oldforms, which then further leads to a more intricate Petersson trace formula and thereby additional arithmetic considerations in moment computations.

Throughout, the $L$-functions are evaluated at points of the form $s=\frac12+\omega$, where the complex shift $\omega$ is chosen such that
\begin{equation}\label{eqn:shift-condition}
\omega=u+iv\, \text{ with }\,  u, v \in \mathbb{R}, \quad
 |u| \, < \frac{1}{\log(p^\nu)}
 \,\, \text{ and } \,\,
 |\omega|\leq \eta
\end{equation}
for some sufficiently small constant \(\eta>0\). 

Shifted moments play an important role in modern analytic number theory, both because they reveal more transparently the structure of the main terms and because they arise naturally in problems concerning nonvanishing, subconvexity, and correlations of zeros of automorphic L-functions.

All moments considered in this paper are taken with respect to the harmonic weight defined by
\[
\sum_{f\in H_k^*(p^\nu)}^{h} A(f)
=
\sum_{f\in H_k^*(p^\nu)}
\frac{\Gamma(k-1)}{(4\pi)^{k-1}\langle f,f\rangle_{p^\nu}}
A(f),
\]
for any function \(A\).

We establish the following two results concerning the twisted first moment of the family of \(L\)-functions in \eqref{eqn:family}. Our asymptotic formulae include explicit, though nonuniform, error terms with fully specified dependence on all relevant parameters.


\begin{thm}\label{thm:first twisted moment}
Suppose that $k, k'$ are even positive integers with $k\geq8$ and $k>2k'-2$. Let $p$ be a prime, $\ell$ a positive integer with $(\ell, p)=1$, and let $\nu\geq 3$ be an integer. Consider $g\in H_{k'}^*(D)$, where  $D$ is a positive integer such that $D=1$ or $D$ is prime. Let $\omega$ be a complex number satisfying \eqref{eqn:shift-condition}.  Then for any $\epsilon>0$ we have
\begin{align*}
&\sum_{f\in H^*_k(p^\nu)}^{h}
    \lambda_f(\ell) L\Big(\frac12+\omega, f\otimes g\Big)\\&\hspace{2em}
    =  \left(4\pi^2\right)^\omega \Big(1-\frac{1}{p}\Big)^2   \Big(1-\frac{1}{D}\Big) \frac{ \lambda_g(\ell)}{\sqrt\ell} 
       \frac{\Gamma_g\left(\frac12\right)P_g(0)\xi\left(\frac12+\omega\right)^5}{\Gamma_g\left(\frac12+\omega\right)P_g(\omega)\xi\big(\frac12\big)^5}  \left(\frac{(p^\nu D)^\omega-(p^\nu D)^{-\omega}}{2\omega}\right) 
     \\&\hspace{3em}+ \Big(1-\frac{1}{p}\Big) \frac{ \lambda_g(\ell)}{\sqrt\ell} \left( 
     \zeta_{p^\nu D} (1+2\omega)\ell^{-\omega}
  +  \frac{(4\pi^2)^{\omega} \Gamma_g(\frac12-\omega)}{(4\pi^2)^{-\omega} \Gamma_g(\frac12+\omega)}     \zeta_{p^\nu D} (1-2\omega)
   \ell^{\omega}\right)\\&\hspace{3em}
   +O\Big( \ell^{-\frac14+\epsilon} (p^\nu D)^{-\frac{5}{32}+\epsilon}\Big)
   +O\bigg( \frac{D^{A+\frac12+\epsilon-\Re(\omega)} \ell^{A+\frac12+\epsilon}  }{p^{\nu(A+\Re(\omega))} k^{2\Re(\omega)}}\bigg),
\end{align*}
where $A$ is chosen such that $1< A \leq \frac{k-k'}{2}-\epsilon$.
\end{thm}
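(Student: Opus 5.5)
We will follow the standard approach: an approximate functional equation, then a Petersson trace formula adapted to newforms of level $p^\nu$. From \eqref{functional-equation} we write
\[
L\Big(\tfrac12+\omega,f\otimes g\Big)=\sum_{n\ge1}\frac{\lambda_f(n)\lambda_g(n)}{n^{1/2+\omega}}\,V_\omega\Big(\frac{n}{p^\nu D}\Big)
+X(\omega)\sum_{n\ge1}\frac{\lambda_f(n)\lambda_g(n)}{n^{1/2-\omega}}\,V_{-\omega}\Big(\frac{n}{p^\nu D}\Big),
\]
where $X(\omega)=(p^\nu D/4\pi^2)^{-2\omega}\Gamma_g(\frac12-\omega)/\Gamma_g(\frac12+\omega)$. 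The cutoff $V_{\pm\omega}$ is defined by a Mellin integral with kernel $\Gamma_g(\frac12\pm\omega+s)/\Gamma_g(\frac12\pm\omega)$. We also insert an auxiliary polynomial $P_g(s)$ vanishing at the poles of $\zeta_{p^\nu D}(1+2\omega+2s)$, as is suggested by the factors $P_g$ and $\xi$ in the statement. The factor $\zeta_{p^\nu D}(2s)$ is absorbed by writing $n=d^2m$ with $(d,p^\nu D)=1$. The twisted moment then reduces to
\[
\sum_m \frac{\lambda_g(m)}{m^{1/2\pm\omega}}\,\sum_f^h \lambda_f(\ell)\lambda_f(m),
\]
weighted by $V$.

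Next we need the harmonic sum over the newforms $H_k^*(p^\nu)$. Using Atkin--Lehner theory, we express the newform projection through Möbius-type inclusion--exclusion over the levels $p^\nu, p^{\nu-1}, p^{\nu-2}$ (the formula of Iwaniec--Luo--Sarnak and Rouymi for prime powers). For $\nu\ge3$ this reads
\[
\Delta^*_{p^\nu}(\ell,m)=\Delta_{p^\nu}(\ell,m)-\frac1p\,\Delta_{p^{\nu-1}}(\ell,m)
\]
for $(\ell m,p)=1$. Here each $\Delta_N$ equals $\delta_{\ell=m}$ plus a Kloosterman--Bessel sum over moduli $c\equiv0\pmod N$. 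This accounts for the factors $(1-1/p)$.

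The diagonal $m=\ell$, combined with the sum over $d$, produces the term
\[
\Big(1-\frac1p\Big)\frac{\lambda_g(\ell)}{\sqrt\ell}\Big(\zeta_{p^\nu D}(1+2\omega)\ell^{-\omega}+\cdots\Big)
\]
after shifting the Mellin contour to $\Re s<0$. The residue at $s=0$ gives the two $\zeta_{p^\nu D}(1\pm2\omega)$ terms. The residues at the poles of $\zeta$, which are cancelled by $P_g$ and recombined with the dual sum, give the term involving $((p^\nu D)^\omega-(p^\nu D)^{-\omega})/2\omega$ with its $(1-1/p)^2(1-1/D)$ factor. The remaining integral is bounded by moving the contour to $\Re s=-A$. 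This is where the second error term $D^{A+1/2-\Re\omega}\ell^{A+1/2}p^{-\nu(A+\Re\omega)}k^{-2\Re\omega}$ arises. The constraint $A\le\frac{k-k'}2-\epsilon$ comes from the Gamma factor $\Gamma(s+\frac{|k-k'|}2)$ having its poles to the left of $-A$.

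The off-diagonal term is the main obstacle. We have to bound
\[
2\pi i^{-k}\sum_{c\equiv0\,(p^\nu)}\frac{1}{c}\sum_m \frac{\lambda_g(m)}{m^{1/2+\omega}}\,S(\ell,m;c)\,J_{k-1}\Big(\frac{4\pi\sqrt{\ell m}}{c}\Big)V\Big(\frac{m}{p^\nu D}\Big).
\]
The plan is to open the Kloosterman sum and apply Voronoi summation for $g$ in $m$ modulo $c$. Since $(c,D)$ is $1$ or $D$, with $D$ prime or $1$, only two cases arise, and the Atkin--Lehner theory of $g$ handles the case $D\mid c$. Voronoi turns $S(\ell,m;c)$ into Ramanujan-type sums. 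Large $k$ ($k\ge8$ and $k>2k'-2$) makes $J_{k-1}(x)\ll x^{k-1}$ decay fast enough for $c\gg\sqrt{\ell p^\nu D}$ and ensures absolute convergence of the dual integral transforms. A dyadic decomposition in $c$ and $m$ combined with Weil's bound gives a saving of the shape $(p^\nu D)^{-1/4}$ from small $c$. Alternatively, bounding the resulting shifted sums of $\lambda_g$ via subconvexity for $L(s,g\otimes\chi)$ should yield the stated $\ell^{-1/4+\epsilon}(p^\nu D)^{-5/32+\epsilon}$. The exponent $5/32$ suggests an interpolation between the trivial Weil bound and a Burgess-type or large-sieve estimate. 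I expect that optimising this balance, while tracking uniformity in $\ell$, $D$ and $p^\nu$, is the hardest part. The dual sum with $X(\omega)$ is treated identically.
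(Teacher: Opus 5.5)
Your outline matches the paper's: approximate functional equation, Rouymi's formula $\Delta^*_{p^\nu}=\Delta_{p^\nu}-\frac1p\Delta_{p^{\nu-1}}$, residues for the diagonal, then a bound for the Kloosterman--Bessel terms. The gap is the nondiagonal step, which you call the main obstacle but never carry out, and what you sketch aims at the wrong target. You hope to extract $\ell^{-1/4+\epsilon}(p^\nu D)^{-5/32+\epsilon}$ from the off-diagonal via subconvexity for twists of $g$, Burgess, or a large sieve. None of this is justified, and none of it is needed.

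In the paper that error term is purely diagonal. It is the remainder $O\big((p^\nu D)^{3/32+\epsilon}x^{1/4}\big)$ of Lemma~\ref{V bounds} at $x=\ell/(p^\nu D)$, multiplied by $\lambda_g(\ell)/\sqrt{\ell}$. Conversely, the $A$-dependent term is the nondiagonal contribution, not a diagonal contour shift. The paper does the following:
\begin{itemize}
\item writes $J_{k-1}$ as a Mellin--Barnes integral on $\Re z=-\frac12-\delta$ and opens $V_{g,\omega}$ on $\Re s=A$;
\item removes the condition $(n,p)=1$ via \eqref{eq:n p rel prime} and recognises the $n$-sums as additively twisted $L(\frac12+s+\frac z2,g,\bar r p^B/c)$;
\item applies their functional equation in the cases $(c,D)=1$ and $D\mid c$ (this is your Voronoi step in Mellin form);
\item shifts to $\Re z=-\mathcal A=-(2A+1+2\delta)$ and bounds the dual sums trivially using $p^\nu\mid c$.
\end{itemize}
The restriction on $A$ comes from $\mathcal A<k-1$ and Lemma~\ref{lem:log derivative gamma g bound}, not from poles of $\Gamma_g$.

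What you miss is that your own remark about $c\gg\sqrt{\ell p^\nu D}$ covers essentially every modulus. Since $p^\nu\mid c$ and $n$ is effectively at most $(p^\nu D)^{1+\epsilon}$, you have $4\pi\sqrt{n\ell}/c\ll(p^\nu D)^{\epsilon}\sqrt{\ell D/p^\nu}$. So Weil's bound together with $J_{k-1}(x)\ll x^{2A}$ already gives a saving of roughly $\ell^{A+\epsilon}D^{A+\frac12+\epsilon}p^{-\nu(A-\epsilon)}$. No subconvexity and no exponent $5/32$ are involved.

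On the diagonal, $P_g$ does not vanish at the pole of $\zeta_{p^\nu D}(1+2s)$; note $P_g(0)=1$. It only removes poles of $\Gamma_g(\frac12+s)$, and the $\zeta$-pole is exactly what produces the $(1-\frac1p)(1-\frac1D)/2\omega$ residues. Your shift of the leftover integral to $\Re s=-A$ is harmless for fixed $p$, since the Euler factors cost only $(pD)^{2A-1}$. But it is not where the paper's $A$-term comes from.

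More seriously, you assert rather than compute the recombination into $\frac{(p^\nu D)^\omega-(p^\nu D)^{-\omega}}{2\omega}$. With your $X(\omega)$, using that $P_g$ is even and $\xi(\frac12+u)=\xi(\frac12-u)$, the two $\zeta$-pole residues are equal and opposite and cancel. Meanwhile the dual term $\zeta_{p^\nu D}(1-2\omega)\ell^{\omega}$ keeps the factor $(p^\nu D)^{-2\omega}$. The paper's displayed form comes from the identification $\mathcal M^D_{2,j}(\omega)=\mathcal M^D_{1,j}(-\omega)$, which treats the prefactor $(p^\nu D)^{-\omega}$ of $\mathcal M_2$ as $(p^\nu D)^{\omega}$. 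The two versions agree only in the limit $\omega\to0$ (Corollary~\ref{cor:1st-moment}). Carry out this residue bookkeeping explicitly rather than reading the main term off the statement.
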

Taking the limit as $\omega\to0$, we get


\begin{cor}\label{cor:1st-moment}
Assume the hypotheses of Theorem \ref{thm:first twisted moment}.  Then for any $\epsilon>0$ we have
    \begin{align*}
    &\sum_{f\in H_k^*(p^\nu)}^h \lambda_f(\ell) L\Big(\frac12, f\otimes g \Big)
    \\&=
  \left(1-\frac1p\right)^2\left(1-\frac1D\right)\frac{\lambda_g(\ell)}{\sqrt{\ell}}
\left(\nu\log p+\log\frac{e^{2\gamma} D}{4\pi^2\ell}
+\psi\Big(\frac{k-k'+1}{2}\Big)
+\psi\Big(\frac{k+k'-1}{2}\Big)
\right)
\\&\hspace{2em}
+ \Big(1-\frac1p\Big)\frac{\lambda_g(\ell)}{\sqrt{\ell}}\left(\left(1-\frac1D\right)\frac{\log p}{p}
+\Big(1-\frac1p\Big)\frac{\log D}{D}\right) \\
&\hspace{2em}+O\bigg(\frac{ 1}{\ell^{\frac14-\epsilon}(p^\nu D)^{\frac{5}{32}-\epsilon}}\bigg)
  +O\bigg( \frac{D^{A+\frac12+\epsilon} \ell^{A+\frac12+\epsilon} }{p^{\nu A}}\bigg),
   \end{align*}
where $\psi$ is the digamma function, $\gamma$ denotes the Euler-Mascheroni constant, and $A$ is a constant chosen such that $1< A \leq \frac{k-k'}{2}-\epsilon$.
\end{cor}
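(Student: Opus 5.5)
We obtain the corollary by letting $\omega\to0$ in Theorem \ref{thm:first twisted moment}, using the Laurent expansions of each term. The left-hand side is continuous in $\omega$ at $0$, since $L(\frac12+\omega,f\otimes g)$ is entire for $f\neq g$, and the family is finite. So it suffices to compute the limit of the main terms on the right and check that the error terms stay uniform.

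\textbf{First main term.} Write $\Phi(\omega)=(4\pi^2)^\omega\frac{\Gamma_g(\frac12)P_g(0)\xi(\frac12+\omega)^5}{\Gamma_g(\frac12+\omega)P_g(\omega)\xi(\frac12)^5}$, so that $\Phi(0)=1$. Since
\[
\frac{(p^\nu D)^\omega-(p^\nu D)^{-\omega}}{2\omega}\to\log(p^\nu D)=\nu\log p+\log D,
\]
this term has a finite limit. Its value is $(1-\frac1p)^2(1-\frac1D)\frac{\lambda_g(\ell)}{\sqrt\ell}(\nu\log p+\log D)$.

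\textbf{Second main term.} This bracket has simple poles that must cancel. Write $\zeta_{p^\nu D}(1+2\omega)=\zeta(1+2\omega)E(\omega)$ with $E(\omega)=(1-p^{-1-2\omega})(1-D^{-1-2\omega})$. Write also $G(\omega)=(4\pi^2)^{2\omega}\Gamma_g(\frac12-\omega)/\Gamma_g(\frac12+\omega)$.
\begin{enumerate}
\item Use $\zeta(1+2\omega)=\frac1{2\omega}+\gamma+O(\omega)$. Then the bracket equals $\frac{1}{2\omega}\bigl(E(\omega)\ell^{-\omega}-G(\omega)E(-\omega)\ell^{\omega}\bigr)+\gamma\bigl(E(0)+E(0)\bigr)+O(\omega)$.
\item The polar parts cancel because $G(0)=1$. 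The limit is then
\[
\tfrac12\frac{d}{d\omega}\Bigl[E(\omega)\ell^{-\omega}-G(\omega)E(-\omega)\ell^{\omega}\Bigr]_{\omega=0}+2\gamma E(0).
\]
\item Evaluate the derivative: it equals $E'(0)-E(0)\log\ell-G'(0)E(0)+E'(0)-E(0)\log\ell$.
\item Compute $G'(0)=2\log 4\pi^2-2\frac{\Gamma_g'}{\Gamma_g}(\frac12)$. Here $\frac{\Gamma_g'}{\Gamma_g}(\frac12)=\psi(\frac{k-k'+1}{2})+\psi(\frac{k+k'-1}{2})$, by \eqref{eq:Gammag}.
\item Compute $E'(0)/E(0)=2\frac{\log p}{p-1}+2\frac{\log D}{D-1}$.
\end{enumerate}
Assembling these, the second term becomes
\[
\Bigl(1-\tfrac1p\Bigr)\frac{\lambda_g(\ell)}{\sqrt\ell}\Bigl[E(0)\Bigl(2\gamma-\log\ell-\log4\pi^2+\psi(\cdot)+\psi(\cdot)\Bigr)+E'(0)\Bigr].
\]
Note that $(1-\frac1p)E(0)=(1-\frac1p)^2(1-\frac1D)$. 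We also have $E'(0)=2(1-\frac1D)\frac{\log p}{p}+2(1-\frac1p)\frac{\log D}{D}$. This follows from $(1-p^{-1})\frac{\log p}{p-1}=\frac{\log p}{p}$, and similarly for $D$.

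\textbf{Combining and error terms.} Adding the two limits gives
\[
\Bigl(1-\tfrac1p\Bigr)^2\Bigl(1-\tfrac1D\Bigr)\frac{\lambda_g(\ell)}{\sqrt\ell}\Bigl(\nu\log p+\log\frac{e^{2\gamma}D}{4\pi^2\ell}+\psi+\psi\Bigr)
\]
plus the $E'(0)$ term. The constant in the $E'(0)$ term is worth double-checking: the limit produces $\frac12 E'(0)\cdot 2=E'(0)$ from the two derivative contributions, which gives the coefficients $2\frac{\log p}{p}$ and $2\frac{\log D}{D}$. The stated corollary has coefficient $1$ on these terms. I would recheck this against the exact normalization in the theorem, for example whether $\zeta_{p^\nu D}$ removes only $p$ and $D$, and adjust accordingly.

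For the error terms, those in Theorem \ref{thm:first twisted moment} are uniform in $\omega$ satisfying \eqref{eqn:shift-condition}. Setting $\Re(\omega)=0$ in the limit, the second error term becomes $D^{A+\frac12+\epsilon}\ell^{A+\frac12+\epsilon}p^{-\nu A}$, and the $k^{2\Re\omega}$ factor disappears.

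The main obstacle is the delicate bookkeeping in the second main term: confirming that the poles cancel exactly and tracking the constants. Everything else is routine.
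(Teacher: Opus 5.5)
Your route is the paper's own: there the corollary is justified only by the remark that one lets $\omega\to0$ in Theorem~\ref{thm:first twisted moment}. Your expansion is correct: the simple poles cancel via $G(0)=1$, the values of $G'(0)$ and the $2\gamma E(0)$ term are right, the first main term tends to $\log(p^\nu D)$, and the error terms are uniform in $\omega$.

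The factor of $2$ you flag is genuine, and it is not a normalization problem on your side. With the paper's definition $\zeta_{p^\nu D}(1+2\omega)=\zeta(1+2\omega)(1-p^{-1-2\omega})(1-D^{-1-2\omega})$, one has $E'(0)=2(1-\frac1D)\frac{\log p}{p}+2(1-\frac1p)\frac{\log D}{D}$. Since $E'(0)$ enters the limit with coefficient $1$, the secondary term is
\[
2\Big(1-\frac1p\Big)\frac{\lambda_g(\ell)}{\sqrt\ell}\Big(\Big(1-\frac1D\Big)\frac{\log p}{p}+\Big(1-\frac1p\Big)\frac{\log D}{D}\Big),
\]
which is twice what is printed.

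You can confirm this without using the $\omega$-dependent formula. At $\omega=0$ one has $\varepsilon_0(f\otimes g)=1$, so the diagonal is $2(1-\frac1p)\frac{\lambda_g(\ell)}{\sqrt\ell}V_{g,0}(\ell/p^\nu D)$. The double-pole residue at $s=0$ of $H_{g,0}(s)\zeta_{p^\nu D}(1+2s)(\ell/p^\nu D)^{-s}s^{-1}$ contains $\frac12E'(0)$, so the two copies give $E'(0)$. The rest of that residue reproduces the rest of the stated main term.

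This direct check is the more reliable one. In Section~\ref{sec:first moment} the relations $\mathcal M^D_{2,j}(\omega)=\mathcal M^D_{1,j}(-\omega)$ should read $\mathcal M^D_{2,j}(\omega)=(p^\nu D)^{-2\omega}\mathcal M^D_{1,j}(-\omega)$, because $\mathcal M_2$ carries the prefactor $(p^\nu D)^{-\omega}$. With this correction the first main term of the theorem cancels identically and the dual term acquires the factor $(p^\nu D)^{-2\omega}$. The two changes compensate exactly as $\omega\to0$, so the main term of the corollary is unaffected.

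So do not ``adjust'' anything: your argument proves the corollary with the secondary term doubled. That term has fixed size while both error terms tend to $0$ as $\nu\to\infty$. Hence the printed coefficient cannot be right when $D$ is prime and $\lambda_g(\ell)\neq0$.
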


Closely related to Theorem \ref{thm:first twisted moment} is the work of Bettin \cite{Bettin}, who studied the first moment of twisted Hecke $L$-functions with unbounded shifts $\omega$ in the prime power level aspect, obtaining asymptotic formulae with explicit dependence on the relevant parameters. Our theorem may be viewed as a Rankin-Selberg analogue in which one studies moments of $L(\frac12+\omega,f\otimes g)$ with bounded shifts over primitive forms of prime power level.

Corollary \ref{cor:1st-moment} is similar to a theorem of Pi (see~\cite[Theorem 1.1]{Pi}). In that work, $f$ ranges over primitive forms with nebentypus $\chi_N$, the Kronecker symbol for a prime $N \equiv 1 \pmod{4}$. In such setting, there is no oldform contribution to consider, and the usual Petersson trace formula suffices to evaluate the moment. In contrast, the case of prime power level considered here presents us with a significant oldform contribution, and the argument required to establish the corollary is correspondingly more involved.

As an application of Corollary \ref{cor:1st-moment}, we obtain the following determination result showing that a primitive form $g \in H^*_{k'}(D)$ is uniquely determined by the family of central values $ L\big(\frac12, f\otimes g \big)$ for $f\in {H_k}^*(p^\nu)$. Let $g_1$ and $g_2$ be in $H_{k'_1}^*(D)$, $H_{k'_2}^*(D)$, respectively, such that
\[
 L\Big(\frac12, f\otimes g_1 \Big) =  L\Big(\frac12, f\otimes g_2 \Big) \quad \text{for all }\,\, f\in H_k^*(p^\nu) .
\]
Then by Corollary \ref{cor:1st-moment}, it follows that $\lambda_{g_1}(\ell) = \lambda_{g_2}(\ell)$ for all primes $\ell\neq p$, and thus $g_1=g_2$ by the strong multiplicity one theorem. The reader is referred to \cite{Luo Rama} for the first such determination result.

Next, for the twisted second moment of the family \eqref{eqn:family}, we present the following result. 


\begin{thm}\label{thm:2nd moment}
Let $k, k'$ be even integers satisfying $k+k' >7$, $p$ a prime and $\nu\geq 3$ an integer. Let  $\ell$ be a positive integer such that $(\ell, p)=1$ and $\ell \ll p^{\nu(\frac{2}{17}-\delta)}$ for some sufficiently small $\delta>0$. For a fixed primitive form $g \in H^*_{k'}(1)$ and any $\epsilon>0$, we have
    \begin{align*}
     &\sum^h_{f\in H^*_k(p^\nu)} 
     \lambda_f(\ell) L\Big( \frac12+\omega_1, f\otimes g\Big) 
     L\Big(\frac12+\omega_2, f\otimes g\Big)
    \\
   & =     p^{-\nu(\omega_1+\omega_2)} 
   \Big\{ \mathfrak{m}(\omega_1, \omega_2 ) 
   + \epsilon_{\omega_1}(f\otimes g) \mathfrak{m}(-\omega_1, \omega_2 ) 
   +\epsilon_{\omega_2}(f\otimes g) \mathfrak{m}(\omega_1, -\omega_2 ) 
   \\
   &\qquad \qquad \qquad+\epsilon_{\omega_1}(f\otimes g)\epsilon_{\omega_2}(f\otimes g)
    \mathfrak{m}(-\omega_1, -\omega_2 ) 
   \Big\} \\&\hspace{2em} + O\left(\ell^{\frac{17}{8}}p^{-\nu\left(\frac14-O(\epsilon)\right)}
+\ell^{\frac{5k-5-6\theta}{4k-8\theta}}
p^{-\nu\left(\frac{k-1-2\theta}{4k-8\theta}-O(\epsilon)\right)} +\ell^{\frac{(5-4\theta)(k+k'-2)}{4k-8\theta}}
p^{-\nu\left(\frac{k+k'-2}{4k-8\theta)}-O(\epsilon)\right)}\right)
,
    \end{align*}
where
       \begin{align*}
    &\mathfrak{m}(\omega_1, \omega_2 )
    \\
    =&  \left(1-\frac{1}{p}\right)p^{-\nu\omega_1+\nu\omega_2}\frac{(4\pi^2)^{2\omega_1} }{2\omega_1}  \frac{\Gamma_g(1/2-\omega_1) }{\Gamma_g(1/2+\omega_1)}\frac{L(1-\omega_1+\omega_2, g\otimes g)}{\zeta(2-2\omega_1+2\omega_2)}
    \zeta_{p^\nu}(1-2\omega_1)\zeta_{p^\nu}(1+2\omega_2)\\&\hspace{2em}\times \sum_{de=\ell} 
        \frac{ e^{\omega_1} d^{-\omega_2}}{\sqrt{\ell}}
      \sum_{ab=d} \frac{\mu(a)  \lambda_g(b)}{a^{1-\omega_1+\omega_2} } 
      \Upsilon(1-\omega_1+\omega_2, p, ae) .
    \end{align*}
Here   
\[
    \varepsilon_{\omega}(f \otimes g)
    = \frac{(4\pi^2)^{2\omega}\Gamma_g\big(\frac12-\omega\big)}{\Gamma_g\big(\frac12+\omega\big)},
    \] 
 where $\Gamma_g$ is defined in \eqref{eq:Gammag}. We also have $\xi(s)=s(1-s)\pi^{-s/2}\Gamma\!\left(\frac{s}{2}\right)\zeta(s)$, the completed $\zeta$-function of Riemann. $P_g(s)$ denotes an even polynomial with real coefficients, depending only on $k,k'$, such that
$P_g(s)\Gamma_g\!\left(\frac12+s\right)$ is holomorphic in the region $-A<\Re s$ for some $A>\frac12$. The function $\Upsilon$ is given by 
 \begin{equation}\label{eq:defn upsilon}
    \Upsilon(s, p, ae)
    =  \Big( \sum_{j=0}^\infty
    \frac{\lambda_g(p^j)^2}{p^{sj}}
    \Big)^{-1}
        \prod_{q\mid ae} 
         \Big( \sum_{j=0}^\infty
    \frac{\lambda_g(q^j)^2}{q^{sj}}
    \Big)^{-1}
     \Big( \sum_{j=0}^\infty
    \frac{\lambda_g(q^j) \lambda_g(q^{\nu_{q}(ae)+j})}{q^{sj}}
    \Big).
  \end{equation}
\end{thm}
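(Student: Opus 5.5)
Our plan for Theorem \ref{thm:2nd moment} follows the standard route: an approximate functional equation for the product, a harmonic average over primitive forms of level $p^\nu$, and separation of diagonal and off-diagonal terms.

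\textbf{Step 1: approximate functional equation.} Since $g$ has level $1$, the functional equation \eqref{functional-equation} holds with conductor $p^{2\nu}$. We apply it to the product
\[
\Lambda\big(\tfrac12+\omega_1,f\otimes g\big)\,\Lambda\big(\tfrac12+\omega_2,f\otimes g\big).
\]
We use a symmetric approximate functional equation with an even weight $G(s)$ vanishing at the relevant poles. This writes the product as four terms, indexed by the sign changes $(\pm\omega_1,\pm\omega_2)$ and weighted by the root-number factors $\varepsilon_{\omega_j}(f\otimes g)$, which do not depend on $f$. Each term has the form
\[
\sum_{m,n}\frac{\lambda_f(m)\lambda_f(n)\lambda_g(m)\lambda_g(n)}{m^{1/2+\omega_1}n^{1/2+\omega_2}}\,V\Big(\frac{mn}{p^{2\nu}}\Big),
\]
together with the $\zeta_{p^\nu}(2s)$ factors written as sums over $d_1^2,d_2^2$. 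The effective length is $mn\ll p^{2\nu+\epsilon}$. It therefore suffices to treat $\mathfrak m(\omega_1,\omega_2)$ and obtain the others by symmetry.

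\textbf{Step 2: averaging over primitive forms.} We combine $\lambda_f(\ell)\lambda_f(m)\lambda_f(n)$ via Hecke relations into sums of $\lambda_f(\ell mn/c^2)$ for $(c,p)=1$. We then need a Petersson formula restricted to newforms of level $p^\nu$, $\nu\ge3$, which we assume is established earlier in the paper. Via the Iwaniec–Luo–Sarnak / Rouymi decomposition, it is a combination of $\Delta_{p^\nu}-\frac1p\Delta_{p^{\nu-1}}$. Here $\Delta$ denotes the full Petersson formula $\delta_{m=n}+2\pi i^{-k}\sum_c S(m,n;c)c^{-1}J_{k-1}(4\pi\sqrt{mn}/c)$. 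For $\nu\ge3$, the oldforms from level $p^{\nu-2}$ cancel in this combination, with the $p$-part handled by $(\ell,p)=1$. The diagonal $\delta$-terms then give $(1-\frac1p)\sum_{\ell m=n}$-type sums. These are Dirichlet series in $\lambda_g(m)\lambda_g(\ell m)$ and factor as
\[
\frac{L(1-\omega_1+\omega_2,g\otimes g)}{\zeta(2-2\omega_1+2\omega_2)}\,\zeta_{p^\nu}\cdots
\]
times the local corrections $\Upsilon(\cdot,p,ae)$, coming from Euler factors at primes dividing $\ell$ and at $p$. The divisor sum $\sum_{de=\ell}\sum_{ab=d}\mu(a)\lambda_g(b)$ arises from Hecke multiplicativity for $\lambda_g(\ell m)$. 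Shifting the contour in the Mellin variable to the left past $s=0$ produces the residue giving $\mathfrak m$, with the $1/(2\omega_1)$ and gamma-ratio factors. The remaining integral is $O(p^{-\nu(1/2-\epsilon)})$.

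\textbf{Step 3: off-diagonal Kloosterman terms.} This is the main obstacle. We split according to $mn$ and the Bessel function. Where $\sqrt{\ell mn}/c$ is small, we use $J_{k-1}(x)\ll x^{k-1}$ together with Weil's bound. Note $c\equiv0\pmod{p^{\nu-1}}$, so $c\ge p^{\nu-1}$. This gives terms like the second and third error terms, with exponents in $k$ and $\theta$ (Kim–Sarnak, $\theta=7/64$, entering through $\lambda_g$). In the transition range we apply Voronoi summation in $m$ for $g$ (level $1$) to the sum $\sum_m\lambda_g(m)S(m\ell,n;c)$, or equivalently bound sums of $\lambda_g$ twisted by additive characters. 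This yields the $\ell^{17/8}p^{-\nu/4}$ term. The exponents are balanced by optimizing a cutoff parameter $M=p^{\nu\alpha}\ell^\beta$ between the two treatments, and this is where the restriction $\ell\ll p^{\nu(2/17-\delta)}$ enters. The other three terms of Step 1 are treated identically, giving the stated four-term main term with prefactor $p^{-\nu(\omega_1+\omega_2)}$.
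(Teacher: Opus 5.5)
There is a genuine gap in Steps 2--3. You take the whole main term from the diagonal and treat every Kloosterman term as an error term, but in this family the off-diagonal contributes at main-term order.

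With $D=1$, each approximate functional equation has length $p^\nu$. So for $m,n\asymp p^\nu$ and the smallest moduli $c=p^\nu$ (and $c=p^{\nu-1}$ from the $-\frac1p\Delta_{p^{\nu-1}}$ term), the Bessel argument $4\pi\sqrt{mn\ell}/c$ is $\gg 1$, not small. Weil's bound there gives only $O(p^{\nu/2+\epsilon})$.

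In the paper, the $c$-sum is first truncated to $c\le C$, and then Vorono\"{\i} summation in $m$ is applied to the whole Kloosterman term. The zero frequency $h=0$, where the Ramanujan sum is $S(0,0;c)=\phi(c)$, is of the same size as the diagonal. It is evaluated using Watson's formula for $\int_0^\infty J_{k-1}(y)J_g(y)y^{-2s}\,dy$ and the Dirichlet series $\sum_{p^\nu\mid c}\phi(c)c^{-1-2s}-\frac1p\sum_{p^{\nu-1}\mid c}\phi(c)c^{-1-2s}$. After a contour shift it splits into three pieces:
\begin{itemize}
\item one piece, $\mathscr{M}^{OD}_{j;3}$, cancels the diagonal $M^D_j$ exactly;
\item the other two, once their residues at $s=0$ cancel among the four signed terms, produce $\mathfrak m(\pm\omega_1,\pm\omega_2)$ (Proposition \ref{prop:MD plus MOD}).
\end{itemize}
This explains the shape of the main term. $\mathfrak m(\omega_1,\omega_2)$ multiplies the unweighted term, yet it carries $(4\pi^2)^{2\omega_1}\Gamma_g(\frac12-\omega_1)/\Gamma_g(\frac12+\omega_1)$ and $\zeta_{p^\nu}(1-2\omega_1)$. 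These factors are produced by the Bessel integral in the zero-frequency term, so a diagonal residue computation alone cannot reproduce the stated main term.

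The sources you assign to the error terms are also off.
\begin{itemize}
\item The $h\neq 0$ terms are shifted convolution sums $\sum_{m-aenp^B=-h}\lambda_g(m)\lambda_g(n)H(m,aenp^B)$. The $\ell^{17/8}p^{-\nu/4}$ term comes from the Duke--Friedlander--Iwaniec type bound of Theorem \ref{DFI type result}, not from bounds for additively twisted sums of $\lambda_g$.
\item $\theta$ does not enter through $\lambda_g$, since Deligne's bound holds for holomorphic $g$. It is the spectral-gap exponent of $\Gamma_0(p^\nu)$, entering through the Deshouillers--Iwaniec/Matom\"{a}ki large sieve (Theorem \ref{Matomaki}). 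This is used in Lemma \ref{restrict C} to control the tail $c>C$. Balancing that tail against the off-off-diagonal bound through the choice of $C$ yields the $\theta$-dependent error terms.
\item A single approximate functional equation for the product in $mn/p^{2\nu}$ gives two terms, not four. The four-term structure comes from applying \eqref{eq:AF} to each $L$-value separately.
\end{itemize}
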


The method used to prove our theorem also applies to the case of $g \in H_{k'}^{*}(D)$ for square-free $D.$ However, carrying this out would involve a substantial amount of technical computations without adding new conceptual ingredients. We therefore do not pursue this extension here. On another note, Theorem \ref{thm:2nd moment} applied with $\omega_2 = \overline{\omega_1}=\omega$ can be used to establish subconvexity and nonvanishing results for the family of values $\big\{L\big(\frac12+\omega, f\otimes g \big)\big\}$.

Letting $\omega_1, \omega_2$ tend to $0$ in Theorem \ref{thm:2nd moment}, we obtain 


\begin{cor}\label{cor:thm 2nd moment}
Under the hypotheses of Theorem \ref{thm:2nd moment}, we have
\begin{align*}
\sum_{f\in H_k^\star(p^\nu)}^h
\lambda_f(\ell)
L\Big(\frac12,f\otimes g\Big)^2
&=
Q_{g,\ell,p}(\nu\log p)\\&\hspace{-6em}
+
O\left(\ell^{\frac{17}{8}}p^{-\nu\left(\frac14-O(\epsilon)\right)}
+\ell^{\frac{5k-5-6\theta}{4k-8\theta}}
p^{-\nu\left(\frac{k-1-2\theta}{4k-8\theta}-O(\epsilon)\right)}
+
\ell^{\frac{(5-4\theta)(k+k'-2)}{4k-8\theta}}
p^{-\nu\left(\frac{k+k'-2}{4k-8\theta}-O(\epsilon)\right)}
\right),
\end{align*}
where $Q_{g,\ell,p}$ is a polynomial of degree $3$ with leading coefficient
\[
\frac{2}{\pi^2}
\left(1-\frac1p\right)^3
L(1,\operatorname{sym}^2 g)
\frac1{\sqrt{\ell}}
\sum_{r\mid \ell}
\lambda_g\left(\frac{\ell}{r}\right)
\frac{\phi(r)}{r}
\Upsilon(1,p,r).
\]
Here $L(s,\operatorname{sym}^2 g)$ denotes the symmetric square $L$-function of $g$, $\phi$ is the Euler's totient function and $\Upsilon$ is as defined in \eqref{eq:defn upsilon}.

\end{cor}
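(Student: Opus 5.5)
The corollary follows from Theorem \ref{thm:2nd moment} by letting $\omega_1,\omega_2\to 0$. The error terms there are uniform in the shifts satisfying \eqref{eqn:shift-condition}, so they pass to the limit unchanged. It therefore suffices to show that the main term, namely
\[
M(\omega_1,\omega_2):=p^{-\nu(\omega_1+\omega_2)}\{\mathfrak m(\omega_1,\omega_2)+\epsilon_{\omega_1}\mathfrak m(-\omega_1,\omega_2)+\epsilon_{\omega_2}\mathfrak m(\omega_1,-\omega_2)+\epsilon_{\omega_1}\epsilon_{\omega_2}\mathfrak m(-\omega_1,-\omega_2)\},
\]
is holomorphic near $(0,0)$. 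We must also show that its value there has the form $Q_{g,\ell,p}(\nu\log p)$ with $Q$ cubic, and compute the leading coefficient. Holomorphy at the origin is automatic. The left-hand side is holomorphic in $(\omega_1,\omega_2)$, and the error term is bounded uniformly, so the poles of $M$ on $\omega_1=0$, $\omega_2=0$, $\omega_1=\omega_2$, $\omega_1=-\omega_2$ must cancel. Concretely, I would evaluate $M(0,0)$ as the limit along the diagonal $\omega_1=\omega_2=\omega\to 0$, or via a contour integral
\[
M(0,0)=\frac{1}{(2\pi i)^2}\oint\oint \frac{M(z_1,z_2)}{z_1z_2}\,dz_1dz_2
\]
on small circles, which makes the polynomial structure transparent.

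Next I isolate the singular factors in each $\mathfrak m(\pm\omega_1,\pm\omega_2)$. There are three simple poles. The first is $1/(2\omega_1)$. The second is $\zeta_{p^\nu}(1-2\omega_1)\zeta_{p^\nu}(1+2\omega_2)$, where each factor is $\frac{1}{\mp 2\omega}(1-\frac1p)+O(1)$. The third is $L(1-\omega_1+\omega_2,g\otimes g)$. Since $g\otimes g$ has Rankin–Selberg $L$-function $\zeta(s)L(s,\operatorname{sym}^2 g)/\zeta(2s)$ up to the normalization with $\zeta(2s)$ already built in, we get $L(1+w,g\otimes g)/\zeta(2+2w)\sim \frac{L(1,\operatorname{sym}^2 g)}{\zeta(2)}\cdot\frac1w$. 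Hence each term has a pole of total order $4$ in the shifts. The numerator is $p^{\nu(\pm\omega_1\pm\omega_2)}\times$ (holomorphic), and the combination $p^{-\nu(\omega_1+\omega_2)}p^{-\nu\omega_1+\nu\omega_2}$ and its sign-flipped analogues produce exponentials $e^{L\cdot(\text{linear form})}$ with $L=\nu\log p$. Cancelling a fourth-order pole against exponentials in $L$ leaves a polynomial in $L$ of degree at most $3$, so $Q$ is cubic.

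For the leading coefficient, I keep only the top-order pole contributions. The residue factors are:
\begin{itemize}
\item $(1-\frac1p)$ from the prefactor of $\mathfrak m$;
\item $(1-\frac1p)^2$ from the two $\zeta_{p^\nu}$;
\item $L(1,\operatorname{sym}^2 g)/\zeta(2)=6L(1,\operatorname{sym}^2g)/\pi^2$;
\item the arithmetic sum at $\omega=0$.
\end{itemize}
At $\omega=0$ the arithmetic sum is $\frac1{\sqrt\ell}\sum_{de=\ell}\sum_{ab=d}\frac{\mu(a)\lambda_g(b)}{a}\Upsilon(1,p,ae)$. Writing $r=ae$ gives $\ell/r=b$, and $\sum_{a\mid r}\mu(a)/a=\phi(r)/r$. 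This yields $\frac1{\sqrt\ell}\sum_{r\mid\ell}\lambda_g(\ell/r)\frac{\phi(r)}{r}\Upsilon(1,p,r)$. The remaining numerical constant comes from the standard computation
\[
\lim \frac{\sum_{\pm}(\text{signs})\,e^{L(\ldots)}}{\text{quartic}}=c\,L^3,
\]
which combines with $6/\pi^2$ to give $2/\pi^2$. For the ratio to come out as $2/\pi^2$, I expect $c=1/3$, up to the $1/2$ factors from $1/(2\omega_1)$ and $\zeta(1\pm2\omega)$. The gamma ratios $\epsilon_\omega$ and the powers of $4\pi^2$ equal $1$ at the origin, so they affect only the lower-order coefficients.

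The main obstacle is this bookkeeping. One must check that all four terms share the same residue data up to sign, so that the leading $L^3$ coefficient is a clean symmetric combination. One must also pin down exactly the constant $c$, including the factors of $2$. I would carry this out by setting $\omega_1=\omega_2=\omega$, expanding each term in Laurent series to order $\omega^0$, and reading off the $L^3$ coefficient.
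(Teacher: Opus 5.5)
Your reduction of the arithmetic factor (the substitution $r=ae$, giving $\frac{1}{\sqrt\ell}\sum_{r\mid\ell}\lambda_g(\ell/r)\frac{\phi(r)}{r}\Upsilon(1,p,r)$) is correct, and so is the residue $L(1,\operatorname{sym}^2 g)/\zeta(2)$. The two steps that carry the content of the corollary, the degree and the constant, do not go through as written.

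\textbf{The pole count, and the inference drawn from it.} You count four simple-pole factors per term, including $1/(2\omega_1)$, and then conclude that the degree is at most $3$. That inference fails. Take a combination $\sum G(\omega)e^{L\mu(\omega)}/P(\omega)$ with $\mu$ linear and $P$ homogeneous of degree $r$. Its $L^n$-contributions are homogeneous in $\omega$ of degree at least $n-r$. So the value at the origin is a polynomial in $L$ whose top term is $L^r$, and a quartic denominator would in general produce an $L^4$ term.

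Worse, if the factor $1/(2\omega_1)$ in the displayed $\mathfrak m$ is taken literally, the four-term combination is not holomorphic at the origin at all. Its homogeneous component of degree $-4$ comes only from the product of the leading Laurent terms. With $a=\pm\omega_1$ and $b=\pm\omega_2$, that component is a nonzero multiple of $\sum_{a,b}\frac{1}{a^2b(a+b)}=-\frac{4}{\omega_1^2(\omega_1^2-\omega_2^2)}$. So your ``holomorphy is automatic'' is contradicted by the very formula you expand.

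What the corollary needs is that each term equals $\zeta_{p^\nu}(1\mp2\omega_1)\,\zeta_{p^\nu}(1\pm2\omega_2)\,L(1\mp\omega_1\pm\omega_2,g\otimes g)$ times a function holomorphic at $0$. This is what the paper's proof of the corollary actually uses: its four displayed terms carry no $1/(2\omega_1)$. That gives exactly three simple poles per term, hence degree $3$.

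\textbf{The constant.} You never compute it; you choose $c$ so that the target comes out. With three poles the computation is short. Pull out $e^{-L(\omega_1+\omega_2)}$. The leading part of the four terms is then $\frac14(1-\frac1p)^3\frac{L(1,\operatorname{sym}^2 g)}{\zeta(2)}$, times the arithmetic factor at $0$, times $\sum_{a,b}\frac{e^{L(a+b)}}{ab(a+b)}$. The $L^n$-coefficient of this sum is $\frac{1}{n!}\sum_{a,b}\frac{(a+b)^{n-1}}{ab}$. It vanishes for $n\le 2$ by parity, equals $\frac{8}{6}$ for $n=3$, and vanishes at the origin for $n\ge 4$. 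Higher Taylor terms of the holomorphic factors only feed $L^{\le 2}$. This gives $\frac14\cdot\frac43\cdot\frac{6}{\pi^2}=\frac{2}{\pi^2}$.

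\textbf{The evaluation plan.} Setting $\omega_1=\omega_2=\omega$ and Laurent-expanding each term fails termwise. Two of the four terms contain $L(1\mp(\omega_1-\omega_2),g\otimes g)$, which is singular on the diagonal. The paper first pairs exactly these two terms and lets $\omega_2\to\omega_1$ through a difference-quotient lemma. Only then does it let $\omega_1\to 0$, checking the cancellations with Maple and only for $\ell=1$. Your double-contour version, on circles of different radii and run with the three-pole structure, avoids this problem. It also yields the leading coefficient for general $\ell$ directly, which the paper does not do.
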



\begin{rem*}
If $\ell$ is square-free, then
\[
\sum_{r\mid \ell}
\lambda_g\left(\frac{\ell}{r}\right)
\frac{\phi(r)}{r}
\Upsilon(1,p,r) 
=
\Upsilon(1,p,1)\lambda_g(\ell)
\prod_{r\mid \ell}\frac{2r}{r+1}.
\]
In particular, for $\ell=1$, the leading coefficient becomes
\[
\frac{2}{\pi^2}
\left(1-\frac1p\right)^3
\Upsilon(1,p,1)
L(1,\operatorname{sym}^2 g).
\]
The remaining coefficients of the polynomial $Q_{g,\ell,p}$  are computed explicitly using Maple. Since their expressions are lengthy, they will not be recorded here.
\end{rem*}

Moments of automorphic $L$-functions occupy a central place in analytic number theory, with applications ranging from nonvanishing and subconvexity problems to questions concerning low-lying zeros and arithmetic statistics. One of the earliest breakthroughs in moments of $L$-functions of automorphic forms is due to Duke \cite{Duke}, who proved that as $p\to\infty$ among prime numbers, we have
\[
\frac{1}{|H_2^*(p)|}\sum_{f\in H_2^*(p)} L\Big(\tfrac12,f\Big)^2
= \log p + O\big(p^{-2}\log p\big).
\]
Since the family $\displaystyle\{L\big(1/2,f\big)\}_{f\in H_2^*(p)}$ forms an orthogonal family, the main term on the right-hand side agrees with the prediction of Conjecture~3 in \cite{BK}, which was derived from a hybrid product expression for the $L$-function. Note that Akbary \cite{Akbary} later generalized Duke's result to forms of arbitrary even weight and prime level $p$.

The fourth moment was computed by Kowalski, Michel, and VanderKam \cite{KMV Mol}. As $p\to\infty$ through primes, they showed that
\begin{equation}\label{eq:KVM 4th moment}
\sum_{f\in H_2^*(p)}^h L\Big(\tfrac12,f\Big)^4
= \mathcal{Q}(\log p) + O_\varepsilon\big(p^{-\frac1{12}+\varepsilon}\big),
\end{equation}
where $\mathcal{Q}$ is a sextic polynomial whose leading coefficient is $\frac{1}{60\pi^2}$. Balkanova and Frolenkov \cite{BF} later improved the error term to $O_\varepsilon(p^{-25/228+\varepsilon})$. A straightforward computation, which we omit here, confirms that the leading term again agrees with the prediction of Conjecture~3 in \cite{BK}.

A result that is particularly relevant to the present work is by Balkanova \cite[Corollary 1.5]{Balkanova}, which is an analogue of \eqref{eq:KVM 4th moment} for primitive forms $f$ of even weight $k$ and prime power level $p^\nu$, where $p$ is a fixed prime and $\nu\ge 3$. More precisely, Balkanova proved in \cite{Balkanova}  that
\[
\sum_{f\in H_k^*(p^\nu)}^h
L\Big(\tfrac12,f\Big)^4
=
\CMcal{Q}_k\big(\nu\log p\big)
+
O_{\epsilon,k,p}\Big(
p^{-\nu\left(\frac14-\epsilon\right)}
+
p^{-\nu\left(\frac{k-1-2\theta}{8-8\theta}-\epsilon\right)}
\Big).
\]
Here $\CMcal{Q}_k$ is a sextic polynomial depending on $k$ whose leading coefficient is
\[
\frac{\phi(p^\nu)^7}{p^{7\nu}}
\frac{p^2-1}{p^2}
\frac{1}{60\pi^2}.
\]
The  proof relies on an extension of the Petersson trace formula for $f\in H_k^*(p^\nu)$ established by Rouymi \cite{Rouymi}. In the same work, Rouymi also obtained asymptotics for twisted first, second, and third moments at the central point over $f\in H_k^*(p^\nu)$.

We now turn to results concerning $L$-functions of Rankin-Selberg convolutions. In \cite{KMV RS}, Kowalski, Michel, and Vanderkam showed that for a fixed $g\in H^*_{k'}(D)$ with $D$ square-free and $(D,p)=1$, and for even integers $k,k'$ with $k<12$,  we have
\begin{align*}
\sum_{f\in H_k^*(p)}^h
\Big|L\Big(\tfrac12,f\otimes g\Big)\Big|^2
=\mathcal{Q}_g(\log p)
+O_{g,k,\epsilon}\big(p^{-\frac{1}{12}+\epsilon}\big) \,\, \text{ for every }\, \epsilon>0,
\end{align*}
as $p\to\infty$ in the primes. Here $\mathcal{Q}_g$ is a cubic polynomial depending on $g$ whose leading coefficient is
\[
\frac{L(1,\operatorname{sym}^2 g)}{3\zeta(2)}
\prod_{q\mid D}\frac{(q-1)^2}{q(q+1)}.
\]
(We believe there is a minor typo in \emph{loc.\ cit.}) This leading coefficient may be compared with the one in our Corollary~\ref{cor:thm 2nd moment}.


\noindent \textbf{Notation.} 
Throughout the paper, $p$ will denote a prime number and $\nu$ a positive integer that is at least $3$. We also use $\ell$ to denote a positive integer indivisible by $p.$ The variables $\omega$,  $\omega_1$ and $\omega_2$ will be complex numbers used to shift the $L$-values under consideration away from the central point $s=\frac12$. Throughout the paper, $A$ denotes a positive constant whose value may vary from line to line.

Given two functions $f(x)$ and $g(x)$, we shall interchangeably use the notation  $f(x)=O(g(x))$ and $f(x) \ll g(x)$  to mean that there exists $ \mathcal{C} >0$ such that $|f(x)| \le \mathcal{C} |g(x)|$ for all sufficiently large $x$. Sometimes we will use the notation $f(x)\ll_v  g(x)$ when the constant $ \mathcal{C}$ depends on the parameter $v.$ Most of the constants implicit in the big-$O$ notations in Section \ref{sec:second moment} and afterwards may depend on the weights of forms $k,k'$ and the prime $p$, although we sometimes drop the subscript to simplify the notation.


\section{Preliminaries}
\subsection{Kloosterman Sums} A Kloosterman sum is defined as
    \begin{equation}\label{eqn:kloosterman}
    S(m, n ; c) = \sum_{\substack{1\leq x \leq c, \\ (x, c)=1}} e\Big(\frac{xm+\overline{x}n}{c}\Big) .
    \end{equation}
Clearly, $S(m, n ; c)= S(n, m ; c)$. This sum also has the following property, which we will later need.


\begin{lem}{\cite[Lemma A.12]{Royer}}\label{Kloosterman vanishes}
Let $m, n, c$ be positive integers and $q$ be a prime. Suppose that $q^2 \mid c$, $q\mid m$ and $q \nmid n$. Then $S(m, n; c)=0$.
\end{lem}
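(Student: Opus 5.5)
The plan is to prove the vanishing by reducing to the $q$-primary part of the modulus and then showing that the sum over that part vanishes by averaging over a subgroup. Write $c=q^{a}c'$ with $a\geq 2$ and $(q,c')=1$. By the Chinese remainder theorem, the Kloosterman sum factors twisted-multiplicatively:
\[
S(m,n;c)=S(m\overline{c'},\,n\overline{c'};\,q^{a})\,S(m\overline{q^{a}},\,n\overline{q^{a}};\,c'),
\]
where $\overline{c'}$ is the inverse of $c'$ modulo $q^a$ and $\overline{q^a}$ the inverse of $q^a$ modulo $c'$. This is the standard identity obtained by writing $x = x_1 c'\overline{c'} + x_2 q^a\overline{q^a}$. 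Since $q\nmid c'$, the hypotheses transfer unchanged: $q\mid m\overline{c'}$ and $q\nmid n\overline{c'}$. It therefore suffices to prove $S(m,n;q^a)=0$ whenever $a\geq 2$, $q\mid m$ and $q\nmid n$.

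For the prime-power sum, I will use the substitution $x\mapsto x(1+tq^{a-1})$ with $t$ running over $\mathbb{Z}/q\mathbb{Z}$. Because $a\geq 2$, we have $2(a-1)\geq a$, so $(1+tq^{a-1})^{-1}\equiv 1-tq^{a-1}\pmod{q^a}$, and the map permutes the units modulo $q^a$. Under this substitution:
\begin{itemize}
\item the term $xm$ changes by $xmtq^{a-1}$, which is $\equiv 0 \pmod{q^a}$ because $q\mid m$;
\item the term $\overline{x}n$ changes by $-\overline{x}ntq^{a-1}$.
\end{itemize}
Hence
\[
S(m,n;q^a)=\sum_{x} e\Big(\frac{xm+\overline{x}n}{q^a}\Big)\, e\Big(\frac{-\overline{x}nt}{q}\Big)\quad\text{for every } t.
\]
Averaging over $t\in\mathbb{Z}/q\mathbb{Z}$ gives
\[
S(m,n;q^a)=\sum_x e\Big(\frac{xm+\overline{x}n}{q^a}\Big)\cdot\frac1q\sum_{t \bmod q} e\Big(\frac{-\overline{x}nt}{q}\Big).
\]
Since $q\nmid \overline{x}n$, the inner sum over $t$ is zero for every $x$, and so $S(m,n;q^a)=0$.

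The only step needing care is the claim that $x\mapsto x(1+tq^{a-1})$ is a bijection and that the inverse has the stated form; this is exactly where $q^2\mid c$ is used. The CRT factorization is routine, so I expect no real obstacle.
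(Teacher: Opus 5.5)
Your proof is correct and complete. The CRT factorization reduces the problem to the modulus $q^a$ with $a\ge 2$ (the hypotheses survive because $\overline{c'}$ is a unit mod $q$). Averaging over $x\mapsto x(1+tq^{a-1})$, which permutes $(\mathbb{Z}/q^a\mathbb{Z})^\times$ precisely because $a\ge 2$, then produces the factor $\frac1q\sum_{t \bmod q} e(-\overline{x}nt/q)=0$, since $q\mid m$ and $q\nmid \overline{x}n$.

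The paper gives no argument of its own here; it simply quotes Royer's Lemma A.12. Your reduction-plus-shift argument is the standard self-contained proof of that fact. It is equivalent to the additive version: write $x=u+vq^{a-1}$ and sum over $v$, which yields the vanishing factor $\sum_{v \bmod q} e\big(v(m-\overline{u}^2 n)/q\big)$.
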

We also use Weil's bound which states that
\begin{equation}\label{eqn:weil-bound} 
S(m, n; c) \ll 
(m, n, c)^{\frac12} \tau(c) \sqrt{c}.
\end{equation}

\subsection{Hecke Relations} Next, we note some results on normalized Fourier coefficients of primitive forms. 


\begin{lem}[Hecke recursion formula]\label{Hecke relation}
For a normalized primitive form $f\in H^*_{k}(N)$, we have
    \[
    \lambda_f(mn)= \sum_{\substack{d\mid (m, n), \\ (d, N)=1}} \mu(d) 
    \lambda_f\Big(\frac{m}{d}\Big) \lambda_f\Big(\frac{n}{d}\Big)
    \]
and
    \[
    \lambda_f(m) \lambda_f(n)= \sum_{\substack{d\mid (m, n), \\ (d, N)=1}}
    \lambda_f\Big(\frac{mn}{d^2}\Big).
    \]
\end{lem}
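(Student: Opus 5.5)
The plan is to reduce both identities to the local behaviour of $\lambda_f$ at each prime, using multiplicativity. For a primitive form $f\in H_k^*(N)$, the coefficients $\lambda_f$ are multiplicative, since $f$ is a normalized eigenform of all Hecke operators, including the $U_q$ for $q\mid N$. They satisfy the local rules
\[
\lambda_f(q^{a})\lambda_f(q^{b})=\sum_{j=0}^{\min(a,b)}\lambda_f(q^{a+b-2j})\quad (q\nmid N),
\qquad
\lambda_f(q^{a})=\lambda_f(q)^{a}\quad (q\mid N).
\]
The first rule follows from $T_qT_{q^r}=T_{q^{r+1}}+q^{k-1}T_{q^{r-1}}$ after normalization. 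The second follows because $T_{q^a}=U_q^a$ on forms of level $N$ when $q\mid N$.

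Both sides of each claimed identity are multiplicative in the pair $(m,n)$ in the following sense. Factor $m=\prod q^{a_q}$ and $n=\prod q^{b_q}$. A divisor $d\mid (m,n)$ with $(d,N)=1$ factors uniquely as $d=\prod_{q\nmid N}q^{j_q}$ with $j_q\le\min(a_q,b_q)$. The functions $\mu$ and $\lambda_f$ are multiplicative, and $m/d$, $n/d$, $mn/d^2$ have $q$-parts $q^{a_q-j_q}$, $q^{b_q-j_q}$, $q^{a_q+b_q-2j_q}$. Hence each sum over $d$ factors as a product over primes $q$ of local sums. It therefore suffices to prove both identities for $m=q^a$ and $n=q^b$ with a single prime $q$.

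For the second identity at a single prime:
\begin{itemize}
\item If $q\mid N$, only $d=1$ occurs. The identity reduces to $\lambda_f(q^a)\lambda_f(q^b)=\lambda_f(q^{a+b})$, which holds by complete multiplicativity at $q$.
\item If $q\nmid N$, the identity is exactly the local Hecke rule stated above, with $d=q^j$.
\end{itemize}

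For the first identity at a single prime:
\begin{itemize}
\item If $q\mid N$, it is again $\lambda_f(q^{a+b})=\lambda_f(q^a)\lambda_f(q^b)$.
\item If $q\nmid N$, only $d=1$ and $d=q$ contribute, since $\mu$ kills the higher prime powers. So we must show
\[
\lambda_f(q^{a+b})=\lambda_f(q^a)\lambda_f(q^b)-\lambda_f(q^{a-1})\lambda_f(q^{b-1})\quad (a,b\ge1).
\]
Expand both products using the second identity:
\[
\sum_{j=0}^{\min(a,b)}\lambda_f(q^{a+b-2j})-\sum_{j=0}^{\min(a,b)-1}\lambda_f(q^{a+b-2-2j}).
\]
Reindexing the second sum by $j\mapsto j+1$ shows it equals the first sum with the $j=0$ term removed. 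The difference is therefore $\lambda_f(q^{a+b})$, as required.
\end{itemize}

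There is no serious obstacle. The only point needing care is justifying multiplicativity and the $U_q$ behaviour at primes dividing the level for newforms. This is standard Atkin–Lehner–Li theory, already invoked in the introduction, and I would simply cite it.
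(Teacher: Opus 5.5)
Your proof is correct, and it takes a somewhat different route from the one the paper relies on.

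The paper gives no argument of its own. It cites Iwaniec--Kowalski (14.50)--(14.51). The standard derivation behind that reference is global. One applies the multiplication law $T_mT_n=\sum_{d\mid(m,n),\,(d,N)=1}d^{k-1}T_{mn/d^2}$ on $S_k(N)$ to a newform, which is an eigenform of every $T_n$ with eigenvalue $\lambda_f(n)n^{(k-1)/2}$. This gives the second identity, and the first then follows by M\"obius inversion.

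You instead localize. Multiplicativity reduces both identities to $m=q^a$, $n=q^b$. There you use the three-term recursion at $q\nmid N$ and complete multiplicativity at $q\mid N$. You then obtain the first identity from the second by a one-line telescoping rather than a global inversion.

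Your version is self-contained and isolates exactly which facts about newforms are used. It also makes the role of the condition $(d,N)=1$ transparent: at $q\mid N$ only the exponent $0$ survives, and this is precisely $\lambda_f(q^{a+b})=\lambda_f(q^a)\lambda_f(q^b)$. The cited route is shorter once the operator identity on all of $S_k(N)$ is available, and it handles nebentypus uniformly through $\chi(d)$.

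Two small points to tighten:
\begin{itemize}
\item The local product formula $\lambda_f(q^a)\lambda_f(q^b)=\sum_{j=0}^{\min(a,b)}\lambda_f(q^{a+b-2j})$ needs a short induction on $\min(a,b)$ starting from $\lambda_f(q)\lambda_f(q^r)=\lambda_f(q^{r+1})+\lambda_f(q^{r-1})$. It does not follow from the recursion in one step.
\item In the first identity at $q\nmid N$, you should record the case $\min(a,b)=0$. It is trivial, since only $d=1$ occurs and $\lambda_f(1)=1$.
\end{itemize}
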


For a proof of this lemma, see \cite[(14.50) and (14.51)]{IK} (note that there is a typo in \textit{loc cit} where $\lambda_g(mn)$ and $\lambda_g(m) \lambda_g(n)$ should be swapped). A useful consequence of this, which can be proven by the inclusion-exclusion principle~\cite[p. 135]{Balkanova}, is the following. For any function $H$, we have
    \begin{equation}\label{eq:n p rel prime}
    \sum_{n\geq 1, (n, p)=1} \lambda_g(n) H(n)
    =\sum_{n\geq 1} \lambda_g(n) H(n) 
    -\lambda_g(p) \sum_{n\geq 1} \lambda_g(n) H(np)
    + \sum_{n\geq 1} \lambda_g(n) H(np^2).
    \end{equation}
We will apply this formula frequently to simplify sums of Fourier coefficients over $n$ with $(n, p)=1$.


\subsection{Summation Formulae and Estimates}
We begin by stating the following variation of Petersson Trace Formula due to Rouymi.


\begin{thm}[Petersson Trace Formula and Remark 4 of \cite{Rouymi}]\label{Rouymi}
Let $p$ be a prime and $\nu\geq 3$ a positive integer. Then
    \[
    \Delta^*_{p^\nu}\big( a, b\big)
    =
    \sum_{f \in H^*_k(p^\nu)}^{h} \lambda_f(a) \lambda_f(b)
    =
    \begin{cases}
    \Delta_{p^\nu}\big( a, b\big) 
    -\sdfrac{\Delta_{p^{\nu-1}}\big( a, b\big)}{p} 
    \quad &\text{if } \, (ab, p^\nu)=1, \\
    0 \quad &\text{else},
    \end{cases}
    \]
and
    \[
    \Delta_{p^\nu}\big( a, b\big) 
    = \sum_{f \in H_k(p^\nu)}^{h} \lambda_f(a) \lambda_f(b) 
    = \delta_{a, b} +
    2\pi i^{-k} \sum_{p^\nu \mid c} \frac{S(a, b; c)}{c}
    J_{k-1}\Big(\frac{4\pi \sqrt{ab}}{c} \Big).
    \]
Here,  $J_{k-1}(z)$ denotes the Bessel function of the first kind of order $(k-1)$.
\end{thm}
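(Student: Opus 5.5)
The plan is to derive the formula for $\Delta^*_{p^\nu}$ from the classical Petersson formula for the full space $S_k(p^\nu)$, which is the second displayed identity and which I take as standard, by isolating the newform part through the Atkin--Lehner decomposition. Write $S_k(p^\nu)=\bigoplus_{j=0}^{\nu}\bigoplus_{d\mid p^{\nu-j}} \iota_d S_k^{\mathrm{new}}(p^{j})$, where $\iota_d f(z)=f(dz)$. For each newform $f$ of level $p^j$, the space of its oldforms at level $p^\nu$ has dimension $\nu-j+1$. Using the orthonormal basis of this oldspace constructed by Iwaniec--Luo--Sarnak (and, for prime powers, by Rouymi), one computes $\sum_{h}\psi_h(a)\psi_h(b)$ over that basis. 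Since $(ab,p)=1$, only the coefficients at integers prime to $p$ matter. Here $\psi_h(n)=\lambda_f(n)$ for every $h$ in the oldclass, and the harmonic weights sum to a constant $c_{j,\nu}(f)$. This constant can be computed from $\langle f(dz),f(d'z)\rangle$, and is a rational function of $p$ times $\langle f,f\rangle^{-1}$, adjusted by local factors involving $\lambda_f(p)$.

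Next I would use the fact that for $j\ge2$ a newform of level $p^j$ has $\lambda_f(p)=0$, and that for $j=1$ one has $\lambda_f(p)^2=1/p$. Carrying this out, the oldclass weights become independent of $f$ for $j\ge 2$ and depend only on the pair $(j,\nu)$. This gives
\[
\Delta_{p^\nu}(a,b)=\sum_{j=0}^{\nu} \alpha_{j,\nu}\,\Delta^*_{p^j}(a,b),
\]
where the coefficients $\alpha_{j,\nu}$ are explicit and satisfy $\alpha_{\nu,\nu}=1$. The terms with $j\le1$ carry weights depending on $\lambda_f(p)^2$, which I would handle separately. Subtracting the same identity at level $p^{\nu-1}$, scaled by $1/p$, leaves $\Delta_{p^\nu}-p^{-1}\Delta_{p^{\nu-1}}$. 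The claim is that all contributions from levels $p^j$ with $j\le\nu-1$ then cancel, using $\alpha_{j,\nu}=p^{-1}\alpha_{j,\nu-1}$. The proof thus reduces to checking this recursion, including at $j=0$ and $j=1$, and this requires $\nu\ge3$ so that the low-level terms fit the same pattern. This verification is the main obstacle. I would carry it out by computing the Gram matrix of $\{f(dz)\}_{d\mid p^{\nu-j}}$, which is tridiagonal with entries governed by $\lambda_f(p)/\sqrt p$ and $p^{-1}$, and reading off the sum of the entries of its inverse. For $j\ge2$ the matrix is diagonal up to the volume scaling $[\Gamma_0(1):\Gamma_0(p^\nu)]$, so that case is immediate. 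For $j\in\{0,1\}$ the recursion should follow from an explicit inverse of a Toeplitz-type matrix, and the $\lambda_f(p)$-dependent pieces must cancel exactly in the combination $\nu$ versus $\nu-1$.

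Finally, the vanishing when $(ab,p)>1$ comes directly from newform theory: every $f\in H_k^*(p^\nu)$ with $\nu\ge2$ satisfies $\lambda_f(p)=0$. By multiplicativity, $\lambda_f(a)\lambda_f(b)=0$ whenever $p\mid ab$.
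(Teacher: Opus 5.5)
The paper gives no proof here; it quotes the result from Rouymi. Your framework is the standard route to formulas of this kind: Atkin--Lehner decomposition, one weight $w_{j,\nu}(f)$ per oldclass, then cancellation of every level $p^j$ with $j\le\nu-1$ in $\Delta_{p^\nu}-p^{-1}\Delta_{p^{\nu-1}}$. Your treatment of $j\ge2$ (diagonal Gram matrix because $U_pf=0$) and of the vanishing when $p\mid ab$ is fine. The gap is that the only non-trivial part, $j\in\{0,1\}$, is not carried out, and the computation you outline would not give the recursion.

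\textbf{Which entry of $G^{-1}$.} Since $(ab,p)=1$, among the $f(p^iz)$ only $f(z)$ has nonzero $a$-th and $b$-th coefficients. So an oldclass contributes $\lambda_f(a)\lambda_f(b)$ times the diagonal entry of $G^{-1}$ indexed by $d=1$, not the sum of the entries of $G^{-1}$. Relatedly, it is not true that $\psi_h(n)=\lambda_f(n)$ for every $h$ in an orthogonal basis; Gram--Schmidt produces members with no $f(z)$-component.

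\textbf{Shape of $G$.} $G$ is not tridiagonal. With $u_i=p^{ik/2}f(p^iz)$ one has $\langle u_i,u_{i'}\rangle_{p^\nu}=\langle f,f\rangle_{p^\nu}\Phi_{|i-i'|}$, where:
for $j=1$, $\Phi_r=(\lambda_f(p)/\sqrt p)^r=(\pm1/p)^r$;
for $j=0$, $\Phi_0=1$, $\Phi_1=\sqrt p\,\lambda_f(p)/(p+1)$, and $\Phi_{r+1}=p^{-1/2}\lambda_f(p)\Phi_r-p^{-1}\Phi_{r-1}$ for $r\ge1$.
All entries are nonzero; it is $G^{-1}$ that is banded. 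With a tridiagonal $G$, or with the sum of the entries of $G^{-1}$, the weight changes with the size $\nu-j+1$ of the matrix. For $j=1$ it also changes with the sign of $\lambda_f(p)$. So $\alpha_{j,\nu}=p^{-1}\alpha_{j,\nu-1}$ would come out false.

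\textbf{The level-one block.} For $j=0$ the weight genuinely depends on $\lambda_f(p)^2$. So this block is not $\alpha_{0,\nu}\Delta^*_1(a,b)$ as in your displayed identity, and the cancellation has to be form by form.

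\textbf{What closes the argument.} The $d=1$ diagonal entry of $G^{-1}$ equals $\langle f^{(p)},f^{(p)}\rangle_{p^\nu}^{-1}$. Here $f^{(p)}(z)=\sum_{(n,p)=1}\lambda_f(n)n^{\frac{k-1}{2}}e(nz)$ is the $p$-depleted form:
for $j=0$, $f^{(p)}=f-\lambda_f(p)p^{\frac{k-1}{2}}f(pz)+p^{k-1}f(p^2z)$;
for $j=1$, $f^{(p)}=f-\lambda_f(p)p^{\frac{k-1}{2}}f(pz)$;
for $j\ge2$, $f^{(p)}=f$.
This form lies in the oldclass once $p^{\max(j,2)}\mid p^\nu$, and it agrees with $f$ at every $n$ prime to $p$. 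It is orthogonal to every $f(p^iz)$ with $i\ge1$ because $U_pf^{(p)}=0$, via the adjointness of $U_p$ with $f\mapsto f(pz)$ (equivalently, by the recurrence for $\Phi_r$). Hence
\[
w_{j,\nu}(f)=\frac{\Gamma(k-1)}{(4\pi)^{k-1}}\,\langle f^{(p)},f^{(p)}\rangle_{p^\nu}^{-1}.
\]
Moreover $\langle f^{(p)},f^{(p)}\rangle_{p^\nu}=p\,\langle f^{(p)},f^{(p)}\rangle_{p^{\nu-1}}$ whenever $\max(j,2)\le\nu-1$. So every level $j\le\nu-1$ cancels, form by form, exactly when $\nu\ge3$. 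Explicitly, $\langle f^{(p)},f^{(p)}\rangle_{p^\nu}$ equals $(1-p^{-2})\langle f,f\rangle_{p^\nu}$ for $j=1$ and $(1-p^{-2})(1-\Phi_1^2)\langle f,f\rangle_{p^\nu}$ for $j=0$. This is what the Toeplitz inversion you had in mind should produce, once you use the correct $G$ and the correct entry of its inverse.
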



Next, we state a version of Vorono\"{\i} Summation formula simplified to our setting.

\begin{thm}[Vorono\"{\i} Summation Formula, see p.~185 in \cite{KMV RS}]
 \label{Voronoi}
Consider integers $a$ and $c$ with $(a, c)=1$, and let $g \in H^*_{k'}(1)$ be a primitive form. For a smooth function $H\in C^\infty(0,\infty)$ vanishing in a neighbourhood of zero and rapidly decreasing we have
    \[
    c\sum_{n \geq 1}
    \lambda_g(n) e\Big(\frac{an}{c}\Big) H(n)
    = 
     \sum_{n\geq 1} \lambda_{g}(n) e\Big(\frac{-n\overline{a}}{c}\Big)
    \int_0^\infty H(x)  J_{g}\Big(\frac{4\pi\sqrt{nx}}{c}\Big)\mathop{dx},
    \]
    where $\overline{a}$ is the multiplicative inverse of $a$ modulo $c$, and 
$J_g(x)= 2\pi i^{k'} J_{k'-1}(x)$.

\end{thm}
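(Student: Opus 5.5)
We plan to derive this version of the Voronoï formula from the modularity of $g$ under $\mathrm{SL}_2(\mathbb{Z})$, since $g$ has level $1$. The natural starting point is the additively twisted $L$-function
\[
L(s, g, a/c) = \sum_{n\ge1} \lambda_g(n) e\Big(\frac{an}{c}\Big) n^{-s}.
\]
We will show that it satisfies a functional equation relating it to $L(1-s, g, -\overline{a}/c)$. Then we pass from Dirichlet series to the smooth sum by Mellin inversion.

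\textbf{Step 1: the twisted functional equation.} Choose $\gamma = \begin{pmatrix} a & b \\ c & d\end{pmatrix}\in \mathrm{SL}_2(\mathbb{Z})$ with $d \equiv \overline{a} \pmod c$; this is possible because $(a,c)=1$. Evaluate $g$ at $z = a/c + iy/c$. The relation $g(\gamma w) = (cw+d)^{k'} g(w)$ expresses $g(a/c + iy/c)$ through $g(-d/c + i/(cy))$, up to the factor $(iy)^{-k'}$ times a power of $c$. Taking the Mellin transform in $y$ of both sides then yields
\[
\Lambda(s, g, a/c) = i^{k'}\,\Lambda(1-s, g, -\overline{a}/c),
\qquad
\Lambda(s,g,a/c) = \Big(\frac{c}{2\pi}\Big)^{s}\Gamma\Big(s+\frac{k'-1}{2}\Big)L(s,g,a/c).
\]
Both sides are entire, because $g$ is a cusp form.

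\textbf{Step 2: Mellin inversion.} Let $\tilde H(s)=\int_0^\infty H(x)x^{s-1}\,dx$. This is entire and decays rapidly in vertical strips, since $H$ is smooth, vanishes near $0$, and is rapidly decreasing. Write
\[
\sum_n \lambda_g(n)e(an/c)H(n) = \frac1{2\pi i}\int_{(2)} \tilde H(s) L(s,g,a/c)\,ds.
\]
Shift the contour to $\Re s=-1$. No poles are crossed, and the shift is justified by polynomial growth of $L(s,g,a/c)$ in vertical strips (Phragmén–Lindelöf) together with the decay of $\tilde H$. Now insert the functional equation and expand $L(1-s,g,-\overline a/c)$ as an absolutely convergent Dirichlet series. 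This gives
\[
\sum_{n}\lambda_g(n)e(-n\overline a/c)\, \frac{1}{2\pi i}\int_{(-1)} \tilde H(s)\, i^{k'}\Big(\frac{c}{2\pi}\Big)^{1-2s}\frac{\Gamma(1-s+\frac{k'-1}{2})}{\Gamma(s+\frac{k'-1}{2})}\, n^{s-1}\,ds.
\]

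\textbf{Step 3: identifying the kernel.} It remains to recognise the inner integral as $\frac1c\int_0^\infty H(x)\,2\pi i^{k'}J_{k'-1}(4\pi\sqrt{nx}/c)\,dx$. We would use the Mellin–Barnes representation of the Bessel function, which follows from the Mellin transform
\[
\int_0^\infty J_{\nu}(4\pi\sqrt{x})\,x^{s-1}\,dx = (2\pi)^{-2s}\frac{\Gamma(s+\nu/2)}{\Gamma(1-s+\nu/2)}.
\]
Apply it with $\nu=k'-1$ after the changes of variable $x\mapsto nx/c^2$ and $s\mapsto 1-s$, then use Parseval's formula for Mellin transforms. Once the factors of $c$, $n$ and $2\pi$ are tracked, multiplying through by $c$ gives the claimed identity.

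The main obstacle is this bookkeeping of constants in Step 3: the power $c^{1-2s}$, the factor $2\pi$, and the sign conventions for $i^{k'}$ and $e(-n\overline a/c)$ all have to match the normalisation $J_g = 2\pi i^{k'}J_{k'-1}$. Justifying the interchanges of sum and integral is routine, given the decay of $\tilde H$ and absolute convergence on $\Re s = 2$. Alternatively, the statement can be cited directly from \cite{KMV RS}.
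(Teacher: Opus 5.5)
The paper does not prove this statement. It quotes it from \cite{KMV RS}, p.~185, and uses it as a black box in Section~\ref{sec:second moment-1}, where it is applied to $\tilde F_{M,N}$, which is compactly supported thanks to $\eta_M$. Your derivation is the standard one and it is correct.

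Your Step~1 identity is exactly the twisted functional equation the paper itself uses in Section~\ref{sec:first moment}, in the case $D\mid c$ with $D=1$. With your parametrisation $cw+d=i/y$, so the ``power of $c$'' there is just $c^0$. The constants in Step~3 also match. For $K_n(x)=\frac{2\pi i^{k'}}{c}J_{k'-1}\big(\frac{4\pi\sqrt{nx}}{c}\big)$ one gets
\[
\int_0^\infty K_n(x)\,x^{-s}\,dx=i^{k'}\Big(\frac{c}{2\pi}\Big)^{1-2s}\frac{\Gamma\big(1-s+\frac{k'-1}{2}\big)}{\Gamma\big(s+\frac{k'-1}{2}\big)}\,n^{s-1},
\]
which is precisely the kernel you arrive at in Step~2.

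Two points need tidying.

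First, the integral above converges only for $\frac14<\Re s<\frac{k'+1}{2}$, and absolutely only for $\frac34<\Re s<\frac{k'+1}{2}$. So Parseval cannot be applied on your line $\Re s=-1$. After interchanging the $n$-sum and the integral there, move each $n$-term to, say, $\Re s=1$, and then use Fubini. This shift is legitimate for three reasons:
\begin{itemize}
\item $\Gamma\big(1-s+\frac{k'-1}{2}\big)$ has no poles with $\Re s<\frac{k'+1}{2}$;
\item the Gamma ratio grows only polynomially in the strip;
\item $\tilde H$ decays rapidly on horizontal segments.
\end{itemize}

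Second, that decay of $\tilde H$ on vertical lines requires ``rapidly decreasing'' to cover the derivatives of $H$ as well. This holds in the paper's application.

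Compared with the citation, your route gives a self-contained proof. It checks the normalisation $J_g=2\pi i^{k'}J_{k'-1}$, the factor $c$ on the left-hand side, and the sign in $e(-n\overline a/c)$. Since $g$ has level $1$, the $\mathrm{SL}_2(\mathbb{Z})$ argument is all that is needed.
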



The following is a  large sieve inequality due to Deshouillers and Iwaniec  \cite[Theorem 9]{DI} and  Matom\"{a}ki \cite[Lemma 9]{Matomaki}.

\begin{thm}[Theorem 9 in \cite{DI} and Lemma 9 in \cite{Matomaki}] \label{Matomaki}
Let $r, s, d$ be positive integers that are pairwise coprime. Let $H$ be an infinitely differentiable function that is supported on $[M, 2M]\times [N, 2N] \times [C, 2C]$ for $M, N, C>0$ such that
    \[ 
    \bigg| \frac{\partial^{j+k+\ell}}{{\partial m}^j {\partial n}^k {\partial c}^\ell} \, H(m, n, c) \bigg| 
     \leq M^{-j} N^{-k} C^{-\ell} \quad
     \text{for } \, 0\leq j, k, \ell \leq 2.
    \]
Set $X_{d}= \frac{\sqrt{dMN}}{sC\sqrt{r}}$. Then for any $\epsilon>0$ and complex sequences $\textbf{a}=(a_m)$ and $\textbf{b}=(b_n)$ we have
    \begin{align*}
    & \sum_m a_m 
    \sum_n b_n 
    \sum_{c, (c, r)=1} H(m, n, c)
    S(d m \overline{r}, \pm n, sc) \\
    \ll
    &\, s\sqrt{r} C^{1+\epsilon} {d}^{\frac{7}{64}}
    \frac{\big(1+{X_{d}}^{-1}\big)^{\frac{7}{32}}}{1+X_{d}}
    \bigg(1+X_{d}+\sqrt{\frac{M}{r s}}\bigg)
    \bigg(1+X_{d}+\sqrt{\frac{N}{r s}}\bigg)
    ||\bf{a}||_2 ||\bf{b}||_2 ,
    \end{align*}
where $||\bf{a}||_2  = \sqrt{\sum_{m} |a_m|^2}$. 
\end{thm}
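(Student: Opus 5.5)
Since this is the Deshouillers–Iwaniec/Matomäki estimate quoted from \cite{DI} and \cite{Matomaki}, the plan is to retrace their argument rather than build something new. The first step is to view the sums $S(dm\overline{r},\pm n;sc)$ with $(c,r)=1$ as Kloosterman sums of the group $\Gamma_0(rs)$ attached to the pair of cusps $\mathfrak a=\infty$, $\mathfrak b=1/s$. The pairwise coprimality of $r,s,d$ should give, after choosing scaling matrices suitably,
\[
S_{\mathfrak a\mathfrak b}(dm,\pm n;\gamma)=e(\cdot)\,S(dm\overline r,\pm n;sc),\qquad \gamma=sc\sqrt r ,
\]
with a harmless unimodular factor. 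Next I separate the variables in $H(m,n,c)$. I would write it, via a Fourier/Mellin expansion in $m$ and $n$ (using the derivative bounds for $0\le j,k,\ell\le 2$), as a rapidly convergent superposition of products $\alpha(m)\beta(n)\phi\big(4\pi\sqrt{dmn}/\gamma\big)$. Here $\phi$ is supported near $x\asymp X_d$ and satisfies $\phi^{(i)}\ll X_d^{-i}$. The twists $\alpha,\beta$ are absorbed into $\mathbf a,\mathbf b$ without changing $\|\cdot\|_2$, losing only $C^{\epsilon}$.

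For each such $\phi$, I apply the Kuznetsov formula for $\Gamma_0(rs)$ at the cusps $(\infty,1/s)$. This converts $\sum_\gamma \gamma^{-1}S_{\mathfrak a\mathfrak b}(dm,\pm n;\gamma)\phi(4\pi\sqrt{dmn}/\gamma)$ into three spectral sums, over holomorphic forms, Maass cusp forms $u_j$ with spectral parameters $t_j$, and the Eisenstein series. Each spectral term has the shape $\sum_j \hat\phi(t_j)\,\overline{\rho_{j\mathfrak a}(dm)}\rho_{j\mathfrak b}(n)$. For the regular spectrum I use the standard transform bounds $\hat\phi(t)\ll (1+X_d)^{-1}(1+|t|)^{-A}$-type decay, apply Cauchy–Schwarz in $j$, and insert the Deshouillers–Iwaniec spectral large sieve
\[
\sum_{|t_j|\le T}\Big|\sum_{n\sim N}b_n\rho_{j\mathfrak b}(n)\Big|^2\ll \Big(T^2+\frac{N^{1+\epsilon}}{rs}\Big)\|\mathbf b\|_2^2 .
\]
The analogous bound holds for $\mathfrak a$ with $dm$ in place of $n$. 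This produces the factors $\big(1+X_d+\sqrt{M/rs}\big)\big(1+X_d+\sqrt{N/rs}\big)$ together with the normalisation $s\sqrt r\,C^{1+\epsilon}$ coming from $\gamma\asymp sC\sqrt r$.

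The main obstacle is the exceptional spectrum $t_j\in i(0,\tfrac12)$. There $\hat\phi(t_j)$ grows like $X_d^{-2|t_j|}$ when $X_d<1$, and the coefficients at $dm$ must be controlled uniformly in $d$. Following Matomäki, I would first factor $\rho_{j\mathfrak a}(dm)$ through Hecke relations at primes dividing $d$, which is possible since $(d,rs)=1$, working on a newform basis adapted to the cusp. The Kim–Sarnak bound $|t_j|\le\theta=7/64$ then bounds the Hecke eigenvalue at $d$ by $d^{\theta+\epsilon}$. Using the same $\theta$ to cap $X_d^{-2|t_j|}\le(1+X_d^{-1})^{2\theta}$ gives the factors $d^{7/64}$ and $(1+X_d^{-1})^{7/32}$. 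The delicate point is justifying the Hecke factorisation for oldforms and for the cusp $1/s$ with non-squarefree level, and I expect this is where Matomäki's Lemma 9 does the real work. The Eisenstein contribution is handled identically via explicit divisor-function coefficients, and summing over the separated pieces finishes the bound.
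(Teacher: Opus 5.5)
The paper does not prove this statement; it quotes it from Deshouillers--Iwaniec (the case $d=1$) and Matom\"aki (general $d$). Your framework is the right one: Kuznetsov for $\Gamma_0(rs)$ at the cusps $\infty$ and $1/s$, separation of variables, then the spectral large sieve. There is, however, a genuine gap in how you handle the twist by $d$.

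\textbf{The gap.} In the regular spectrum you apply the large sieve to $\sum_m a_m\rho_{j\infty}(dm)$ ``with $dm$ in place of $n$''. That is a sequence supported on multiples of $d$ up to $2dM$. The Deshouillers--Iwaniec large sieve therefore gives only $T^2+(dM)^{1+\epsilon}/(rs)$, i.e.\ the bracket $1+X_d+\sqrt{dM/(rs)}$. This exceeds the claimed $d^{7/64}\big(1+X_d+\sqrt{M/(rs)}\big)$ by a factor up to $d^{1/2-7/64}$, so the stated bound does not follow.

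\textbf{The fix.} The Hecke step you reserve for the exceptional spectrum must be carried out on the whole spectrum.
\begin{enumerate}
\item Take the orthonormal basis to consist of simultaneous eigenfunctions of $\Delta$ and of all $T_n$ with $(n,rs)=1$; this is possible with oldforms included.
\item Use
\[ \rho_{j\infty}(dm)=\sum_{e\mid(d,m)}\mu(e)\,\lambda_j(d/e)\,\rho_{j\infty}(m/e), \]
and bound $|\lambda_j(d/e)|\le\tau(d)\,d^{7/64}$.
\item Apply the large sieve to each sequence $f\mapsto a_{ef}$ with $e\mid d$. Each has length at most $2M$ and $\ell^2$-norm at most $\|\mathbf a\|_2$, which gives $d^{7/64+\epsilon}$ times the bracket with $\sqrt{M/(rs)}$.
\item Treat holomorphic forms (via Deligne) and the Eisenstein part (divisor-type coefficients) the same way, losing only divisor factors.
\end{enumerate}
Ramanujan at finite places is open even for Maass forms with real $t_j$. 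So the factor $d^{7/64}$ is paid on the entire Maass spectrum, not only on its exceptional part.

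\textbf{Three smaller corrections.}

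First, $d^{7/64}$ and $(1+X_d^{-1})^{7/32}$ come from two distinct Kim--Sarnak bounds. One is the finite-place bound $|\lambda_j(q)|\le q^{7/64}+q^{-7/64}$ for primes $q\nmid rs$. The other is the archimedean bound $|\Im t_j|\le 7/64$, which you correctly use to cap $X_d^{-2|t_j|}$. The archimedean bound does not imply the Hecke bound, contrary to your sentence.

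Second, the ``delicate point'' you anticipate does not arise. Up to a unimodular factor depending only on $n$, $S_{\infty,1/s}(dm,\pm n;sc\sqrt r)$ equals $S(dm\overline r,\pm n;sc)$. So $dm$ sits at the cusp $\mathfrak a=\infty$, and the Hecke factorisation is needed only there. At $\infty$ it holds for every $T_n$-eigenfunction, old or new, because $(d,rs)=1$; nothing about the cusp $1/s$ enters.

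Third, the Bessel transforms do not decay like $(1+X_d)^{-1}(1+|t|)^{-A}$. They are only bounded by $(1+X_d)^{-1}$ while the spectral parameter (or the weight $k$) is $\lesssim 1+X_d$, and they decay, to a fixed polynomial order since only two derivatives are controlled, only beyond that. It is exactly this range, fed into the large sieve with $T\asymp 1+X_d$, that produces the $X_d$ inside both brackets. The decay you state is not available and would not account for those terms.
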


The following result is ~\cite[Proposition B.1]{KMV RS} which is a generalization of \cite[Theorem 1]{DFI}.  It provides an upper bound for a shifted convolution sum of the Fourier coefficients of a primitive form.


\begin{thm}\label{DFI type result}
Let $g$ be a primitive form of square-free level with trivial nebentypus. Suppose that $H$ is a smooth test function on $\mathbb{R}^+ \times \mathbb{R}^+$ that satisfies the condition
    \begin{equation}\label{eq:DFI H condition}
    z^j y^k H^{(jk)}(z, y)
    \leq_{j, k} 
    \Big(1+\frac{z}{Z}\Big)^{-A}
    \Big(1+\frac{y}{Y}\Big)^{-A}
    P^{j+k}
    \quad \text{for all integers } \, j, k \geq 0 , \,\, \text{for all } A>0
    \end{equation}
for some $Z, Y, P \geq 1$. Let $(a, b)=1$ and $h\neq 0$. Then 
    \begin{equation}\label{eq:DFI smoothed sum}
    D_H^{\pm}(a, b ; h)
    = \sum_{a m\pm b n=h }
    \lambda_g(m) \lambda_g(n) H(a m, b n)
    \ll_{g, \epsilon}
    \,
    P^{\frac54} (Z+Y)^{\frac14}
    (ZY)^{\frac14+\epsilon}.
    \end{equation}
\end{thm}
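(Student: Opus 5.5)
The plan is the $\delta$-symbol (circle) method of Duke–Friedlander–Iwaniec, run as in the proof of \cite[Theorem 1]{DFI} with the test function keeping track of $P$, $a$ and $b$. First, insert a smooth partition of unity and reduce to $H$ supported in dyadic boxes $am\asymp Z'\le Z^{1+\epsilon}$, $bn\asymp Y'\le Y^{1+\epsilon}$. The tails $z\gg Z^{1+\epsilon}$ are negligible by the decay factor $(1+z/Z)^{-A}$ in \eqref{eq:DFI H condition}, and similarly for $y$. Next, detect the equation $am\pm bn-h=0$ with the DFI expansion
\[
\delta(r)=\sum_{q\le Q}\ \sideset{}{^*}\sum_{d\bmod q} e\Big(\frac{dr}{q}\Big)\int_{\mathbb{R}} \Phi_q(x)\,e\Big(\frac{rx}{qQ}\Big)\,dx ,
\]
where $\Phi_q$ satisfies $\Phi_q^{(j)}(x)\ll Q^{j}$ and the $x$-integral has length $\ll Q^{\epsilon}$. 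The parameter $Q$ will be chosen at the end, roughly $Q\asymp\sqrt{Z'+Y'}$. The sum $D_H^{\pm}(a,b;h)$ then becomes a sum over $q$, $d$ and $x$ of $e(-dh/q)$ times a product of two linear sums,
\[
\sum_m \lambda_g(m)\,e\Big(\frac{dam}{q}\Big)F_1(m)\qquad\text{and}\qquad \sum_n \lambda_g(n)\,e\Big(\frac{\pm dbn}{q}\Big)F_2(n),
\]
with $F_1$, $F_2$ smooth and carrying $e(amx/(qQ))$ and $e(\pm bnx/(qQ))$ respectively.

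Second, I would apply the Voronoi formula (Theorem \ref{Voronoi}) to each linear sum. For square-free level one needs the version twisted by Atkin–Lehner operators, depending on $(q,D)$; I would cite \cite{KMV RS} for this. Since $(a,b)=1$ but $a$ and $b$ need not be coprime to $q$, write $q_1=q/(q,a)$ and $q_2=q/(q,b)$, so that the additive characters have reduced denominators $q_1$ and $q_2$. After both dualizations, the sum over $d$ collapses to a Kloosterman-type sum of modulus $q$ (lcm of $q_1,q_2$, which divides $q$) in the dual variables $m$, $n$ and $h$. The Weil bound \eqref{eqn:weil-bound} gives
\[
\ll (\cdot,\cdot,q)^{1/2}\,\tau(q)\,q^{1/2}
\]
for this sum. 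The Hankel transforms
\[
\int F_1(z)\,J_g\Big(\frac{4\pi\sqrt{mz}}{q_1}\Big)\,dz
\]
are then estimated by repeated integration by parts, using the derivative bounds $z^j\partial^j\ll P^j$ together with $x\ll Q^{\epsilon}$. This shows the dual sums are essentially supported on
\[
m\ll \frac{q_1^2P^2}{Z'}\Big(1+\frac{Z'}{qQ}\Big)^2 Q^{\epsilon},
\]
with a similar range for $n$ in terms of $Y'$. Within these ranges, the standard first-derivative bound controls the transform's size.

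Third, sum everything up using the Rankin–Selberg mean-square bound $\sum_{m\le M}|\lambda_g(m)|\ll M^{1+\epsilon}$, then sum over $q\le Q$. The gcd factors from $(q,a)$ and $(q,b)$ are absorbed by a divisor-sum argument, since the powers of $a$ and $b$ cancel once one works with the variables $am$ and $bn$. This produces a bound of the form
\[
P^{\alpha}\Big(\frac{Z'+Y'}{Q}\Big)^{\beta}\,Q^{\gamma}\,(Z'Y')^{1/4+\epsilon}.
\]
Choosing $Q=\sqrt{Z'+Y'}$ balances the terms and yields $P^{5/4}(Z+Y)^{1/4}(ZY)^{1/4+\epsilon}$.

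The main obstacle is the uniform analysis of the Bessel transforms. The dual ranges grow with $P$, and the oscillation $e(amx/(qQ))$ interacts with $J_{k'-1}$ in the transition range. Getting the exact exponent $P^{5/4}$, rather than a cruder power, requires splitting the $q$-sum according to whether $Z'/(qQ)$ is small or large and treating the stationary-phase region separately. I would first follow the DFI computation verbatim with $P=1$, and then track the additional factors of $P$ introduced by each integration by parts.
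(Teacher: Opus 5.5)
Your outline follows the argument behind this statement. The paper does not prove it; it quotes \cite[Proposition B.1]{KMV RS}, which is the Duke--Friedlander--Iwaniec $\delta$-symbol argument of \cite[Theorem 1]{DFI} redone with $\lambda_g$ in place of the divisor function: dyadic reduction, $\delta$-expansion with $Q\asymp\sqrt{Z+Y}$, Voronoi in both variables (the level-$D$ version), Weil's bound for the resulting Kloosterman sum modulo $q$, and analysis of the Bessel transforms, all as you describe.

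Two minor corrections. Since $m\asymp Z'/a$, the dual range is $m\ll q_1^2a\,(P+Z'/(qQ))^2Q^{\epsilon}/Z'$ rather than your expression, and this factor $a$ is what later cancels as you say. Also, \cite[Theorem 1]{DFI} already carries the parameter $P$, so the exponent $P^{5/4}$ comes from following their bookkeeping directly, not from a separate $P=1$ computation plus tracking. Your sketch asserts this exponent rather than deriving it.
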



\subsection{Asymptotic Estimates for Bessel and Gamma Functions} We now note some formulae and estimates on Bessel and Gamma functions. 

Assume that $s=\sigma+it$ satisfies $|s|\to\infty$ and $|\arg s|\, \le \pi-\delta$ for some fixed $\delta>0$. A convenient uniform form of Stirling's formula for the magnitude of the Gamma function is
\begin{equation}\label{eqn: stirling-formula}
\log{|\Gamma(s)|}
=
\Big(\sigma-\tfrac12\Big)\log|s|
-\sigma
- t\,\arg(s)
+\tfrac12\log(2\pi)
+O_\delta\!\left(\frac1{|s|}\right).
\end{equation}

The exponential decay is governed by the term $-t\,\arg(s)$. If $|t|$ dominates and
$\sigma$ is moderate, then $\arg(s)\approx \pm \pi/2$ and one recovers the
usual decay factor $e^{-\pi|t|/2}$. If instead $\sigma$ is comparable to or larger
than $|t|$, then $\arg(s)$ is small and the factor $e^{-\pi|t|/2}$ is no longer the
correct description; in this regime the behavior is better expressed in terms of
$|s|^{\sigma-\frac12}e^{-\sigma}$, with the contribution of the term
$-t\,\arg(s)$ retained. 

Using this version of the Stirling's formula, we derive the following result for the ratio of $\Gamma$-values that appears in the functional equation of $L(s,f\otimes g)$.


\begin{lem}\label{lem:log derivative gamma g bound}
Let $k,k',$ and A be constants such that $k > k'>0$ and $0<A < \frac{k-k'+1}{2}$. Let $\omega$ be a complex number with $|\omega|\leq \eta$ for sufficiently small $\eta$. For any real $t$ we have
    \[
    \frac{\Gamma_g\big(\frac12+A+it\big) }{\Gamma_g\big(\frac12+\omega\big) }
    \ll_{\omega}
    k^{-2\Re(\omega)-k+1} \, \big(k^2+t^2\big)^{A+\frac{k-1}{2}}.
    \]
\end{lem}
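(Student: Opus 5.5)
We need to bound $\Gamma_g(\tfrac12+A+it)/\Gamma_g(\tfrac12+\omega)$, where
\[
\Gamma_g(s)=\Gamma\Big(s+\tfrac{k-k'}{2}\Big)\Gamma\Big(s+\tfrac{k+k'}{2}-1\Big).
\]
Write $a=\tfrac{k-k'}{2}$ and $b=\tfrac{k+k'}{2}-1$, so that $a+b=k-1$. The plan is to treat the numerator and denominator separately with the uniform Stirling formula \eqref{eqn: stirling-formula}, and to organise everything in terms of $k$, regarding $k'$ as bounded relative to $k$ where needed.

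\textbf{Denominator.} Since $|\omega|\le\eta$ is small and $\tfrac12+a>\tfrac12$, the two factors $\Gamma(\tfrac12+a+\omega)$ and $\Gamma(\tfrac12+b+\omega)$ are evaluated at arguments with positive real part, bounded away from the poles. Write $\Gamma(x+\omega)=\Gamma(x)x^{\omega}(1+O(1/x))$, uniformly for $x\ge\tfrac12$ and $|\omega|\le\eta$; for small $x$ the factor $x^\omega$ is simply bounded above and below by constants depending on $\omega$. This gives the lower bound
\[
|\Gamma_g(\tfrac12+\omega)|\gg_\omega \Gamma(\tfrac12+a)\,\Gamma(\tfrac12+b)\,\big((\tfrac12+a)(\tfrac12+b)\big)^{\Re\omega}.
\]
The factor $(\tfrac12+b)^{\Re\omega}\asymp k^{\Re\omega}$, and $(\tfrac12+a)^{\Re\omega}\ll_\omega k^{\Re\omega}$ from the appropriate side. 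Since $|\Re\omega|$ is tiny, any resulting loss can be absorbed into the $\ll_\omega$ constant, yielding the target factor $k^{-2\Re\omega}$ in the reciprocal.

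\textbf{Numerator.} Use the elementary inequality $|\Gamma(\sigma+it)|\le\Gamma(\sigma)$ for $\sigma>0$ only to orient the computation. The sharper tool is Stirling: for $\sigma=\tfrac12+A+c$ with $c\in\{a,b\}$,
\[
\frac{|\Gamma(\sigma+it)|}{\Gamma(\tfrac12+c)}\ll |\sigma+it|^{\sigma-\frac12}\,e^{-\sigma-t\arg(\sigma+it)}\,\big/\big((\tfrac12+c)^{c}e^{-\frac12-c}\big).
\]
Since $t\arg(\sigma+it)\ge0$, the exponential factor is at most $e^{-\sigma+\frac12+c}=e^{-A}$, which is bounded. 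What remains is the ratio
\[
\frac{|\sigma+it|^{A+c}}{(\tfrac12+c)^{c}}.
\]

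\textbf{Combining.} Bound $|\sigma+it|\le(k^2+t^2)^{1/2}$ up to constants, since $\sigma\ll k$. This gives a numerator contribution of
\[
(k^2+t^2)^{\frac{A+a}{2}+\frac{A+b}{2}}=(k^2+t^2)^{A+\frac{k-1}{2}}.
\]
For the denominator, use $(\tfrac12+a)^{a}(\tfrac12+b)^{b}\gg k^{a+b}=k^{k-1}$ up to constants depending on $k'$. This holds because $\tfrac12+b\asymp k$, and $\tfrac12+a\ge\tfrac12$; the needed lower bound $(\tfrac12+a)^{a}\gg k^{a}$ is where the implicit dependence must be controlled. This is the main obstacle. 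When $a$ is small it fails literally, so I will instead let $a$ and $b$ both be comparable to $k$, using that the hypothesis $k>k'$ in the intended application ($k>2k'-2$) forces $a\gg k$. Once $a\gg k$ holds, $(\tfrac12+a)^a\ge (ck)^a$, and the constants $c^a$ are $e^{O(k)}$. These exponential-in-$k$ constants must either be tracked against the $e^{-\sigma}$ factors, which cancel exactly as computed above, or absorbed into the implied constant since $k$ is fixed.

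Multiplying the numerator and denominator estimates yields
\[
\frac{\Gamma_g\big(\frac12+A+it\big)}{\Gamma_g\big(\frac12+\omega\big)}\ll_\omega k^{-2\Re(\omega)-k+1}\,\big(k^2+t^2\big)^{A+\frac{k-1}{2}},
\]
as claimed.
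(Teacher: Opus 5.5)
Your proof has a genuine gap. It does not give an implied constant depending only on $\omega$, and uniformity in $k$ (and in $k'$, $t$) is the entire content of the lemma. If constants may depend on $k$, the statement is trivial, since $|\Gamma_g(\tfrac12+A+it)|\le \Gamma_g(\tfrac12+A)$ while the right-hand side is at least $k^{2A-2\Re(\omega)}$.

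The loss occurs in your numerator step. After bounding $e^{-t\arg(\sigma+it)}$ by $1$ and replacing $|\sigma+it|$ by $(k^2+t^2)^{1/2}$, you need $(\tfrac12+a)^a(\tfrac12+b)^b\gg k^{k-1}$. This fails not only for small $a$ but also in the regime $a\gg k$ you retreat to, where the loss is exponential in $k$. Indeed $a\le b$ and $a+b=k-1$, and $k>2k'-2$ forces $\tfrac12+b<\tfrac34k$, so the left side is at most $(\tfrac34)^{k-1}k^{k-1}$; for fixed $k'$ it is of order $2^{1-k}k^{k-1}$. The $e^{-\sigma}$ terms cannot pay for this, because you already spent them to obtain $e^{-A}$. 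Absorbing $e^{O(k)}$ ``since $k$ is fixed'' just reduces the lemma to its trivial version.

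Your denominator step has a milder form of the same defect. For $\Re(\omega)>0$ you need $(\tfrac12+a)^{\Re(\omega)}\gg k^{\Re(\omega)}$, which fails when $a=o(k)$. You repair it with $k>2k'-2$, which is not among the lemma's hypotheses.

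The missing idea is to keep the decay you threw away. The simplest way is the inequality you mention and then set aside, $|\Gamma(\sigma+it)|\le\Gamma(\sigma)$, which follows from $|\Gamma(\sigma)/\Gamma(\sigma+iy)|^2=\prod_{n\ge0}(1+y^2/(\sigma+n)^2)$. The right-hand side is increasing in $|t|$ and equals $k^{2A-2\Re(\omega)}$ at $t=0$, so this reduces everything to $t=0$. Take $x\in\{\tfrac12+a,\tfrac12+b\}$, so $x\ge1$. The monotonicity of $\psi$ and $\psi(y)<\log y$ give
\[
\log\Gamma(x+A)-\log\Gamma\big(x+\Re(\omega)\big)=\int_{\Re(\omega)}^{A}\psi(x+u)\,du\le \big(A-\Re(\omega)\big)\log(x+A).
\]
The same product formula gives $|\Gamma(x+\Re(\omega))/\Gamma(x+\omega)|\ll_\omega 1$ uniformly in $x$.

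The hypothesis $A<\frac{k-k'+1}{2}$ is precisely what makes $x+A<k$ in both factors: $\tfrac12+b+A<k$ and $\tfrac12+a+A<k-k'+1\le k$. Taking $\eta<A$, each factor is $\ll_\omega (x+A)^{A-\Re(\omega)}\le k^{A-\Re(\omega)}$. The full ratio is therefore $\ll_\omega k^{2A-2\Re(\omega)}$, uniformly in $k$, $k'$ and $t$. This handles small $a$ automatically, with no extra assumption on $k'$.

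The paper only says the lemma follows from the uniform Stirling formula, so your choice of tool is the intended one. Stirling works only if the term $-t\arg(s)$ is retained rather than discarded, and if each numerator factor is compared with its own denominator factor rather than with $k$.
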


To later bound general Bessel functions of the first kind, we record the following result.

\begin{lem}[Lemma C2 in \cite{KMV RS}]\label{lem:Bessel}
For $z>0$ and $j_1 \geq 0$, 
    \[
    \frac{z^{j_1}}{(1+z)^{j_1}}J^{(j_1)}_{j_2}(z) \ll_{j_1, j_2}
    \frac{z^{\Re(j_2)}}{(1+z)^{\Re(j_2)+\frac12}}. 
    \]
\end{lem}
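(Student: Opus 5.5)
The statement to prove is the last one above, Lemma C2 of \cite{KMV RS}:
\[
\frac{z^{j_1}}{(1+z)^{j_1}}\,J^{(j_1)}_{j_2}(z)\ll_{j_1,j_2}\frac{z^{\Re(j_2)}}{(1+z)^{\Re(j_2)+\frac12}} .
\]
The plan is to split into the two regimes $0<z\le 1$ and $z\ge 1$. In each regime we use a standard representation of $J_{j_2}$ in which derivatives are easy to control. We take $\Re(j_2)\ge 0$; in our applications $j_2=k-1$ or $k'-1$ is a nonnegative integer.

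\emph{Small $z$.} For $0<z\le1$ the left side is comparable to $z^{j_1}|J^{(j_1)}_{j_2}(z)|$, and the right side is comparable to $z^{\Re j_2}$. Start from the power series
\[
J_{j_2}(z)=\sum_{m\ge0}\frac{(-1)^m}{m!\,\Gamma(m+j_2+1)}\Big(\frac z2\Big)^{2m+j_2}.
\]
Differentiate $j_1$ times termwise. The $m$-th term becomes a constant times $z^{2m+j_2-j_1}$. The new coefficients are polynomial in $m$, while the denominators grow factorially, so the series still converges absolutely on $(0,1]$. Multiplying by $z^{j_1}$ gives $z^{j_1}J^{(j_1)}_{j_2}(z)\ll_{j_1,j_2} z^{\Re j_2}\sum_m C_m z^{2m}\ll z^{\Re j_2}$. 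This settles the small-$z$ regime.

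\emph{Large $z$.} For $z\ge1$ the left side is comparable to $|J^{(j_1)}_{j_2}(z)|$, and the right side is comparable to $z^{-1/2}$. So we need $J^{(j_1)}_{j_2}(z)\ll z^{-1/2}$.

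One route is induction on $j_1$ via the recurrence $2J_\nu'=J_{\nu-1}-J_{\nu+1}$. This writes $J^{(j_1)}_{j_2}$ as a finite linear combination of the functions $J_{j_2+r}$ with $|r|\le j_1$. It then suffices to know $J_\mu(z)\ll_\mu z^{-1/2}$ for $z\ge1$, for each fixed order $\mu$. That follows from the Hankel asymptotic expansion, $J_\mu(z)=\sqrt{2/(\pi z)}\,\big(\cos(z-\mu\pi/2-\pi/4)+O_\mu(1/z)\big)$. The recurrence may produce orders $\mu=j_2-r$ with negative real part. For integer orders we use $J_{-n}=(-1)^nJ_n$. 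For general orders we note that the Hankel asymptotic holds for every fixed complex $\mu$ when $z\ge1$, so boundedness by $z^{-1/2}$ still holds.

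An alternative route for large $z$ is the Poisson-type integral representation valid for $\Re\mu>-1/2$. Writing $J_\mu(z)=z^{-1/2}\big(e^{iz}W_+(z)+e^{-iz}W_-(z)\big)$ with $W_\pm^{(i)}(z)\ll z^{-i}$, and applying Leibniz's rule, gives the same bound $J_\mu^{(j)}(z)\ll z^{-1/2}$.

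Finally, combine the two regimes. On $[1,\infty)$ both sides are bounded above and below by fixed multiples of the quantities used above, so the two estimates together give the lemma for all $z>0$. The main obstacle is obtaining uniformity of the large-$z$ bound in the order near $z\approx1$. The Hankel remainder is only $O_\mu(1/z)$ with constants depending on $\mu$. Since only finitely many orders occur, depending on $j_1$ and $j_2$, this is harmless: continuity of $J_\mu$ on compact subsets of $[1,\infty)$ together with the asymptotic bound for large $z$ covers $z\ge1$.
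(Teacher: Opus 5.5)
Your proof is correct and complete. The paper gives no proof of its own and simply quotes Lemma C.2 from Kowalski--Michel--VanderKam; your split into $0<z\le 1$ (termwise-differentiated power series, giving $z^{j_1}J^{(j_1)}_{j_2}(z)\ll z^{\Re(j_2)}$) and $z\ge 1$ (the recurrence plus Hankel asymptotics, or the representation $z^{-1/2}\big(e^{iz}W_+(z)+e^{-iz}W_-(z)\big)$) is the standard argument for that lemma. The restriction $\Re(j_2)\ge 0$ is unnecessary, because the power-series bound and the Hankel asymptotics both hold for every fixed complex order.
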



\subsection{The Approximate Functional Equation}
By~\cite[(4.9)]{KMV RS}, for $\omega$ satisfying \eqref{eqn:shift-condition}, we have
    \begin{equation}\label{eq:AF}
    \begin{split}
    L\Big(\frac12+\omega, f\otimes g\Big) 
    =&\, ( p^{ \omega} D)^{-\nu} \sum_{n\geq 1}  \frac{\lambda_f(n) \lambda_g(n)}{\sqrt n} 
    V_{g, \omega}\Big(\frac{n}{p^\nu D}\Big)
    \\
    &+( p^{ \omega} D)^{-\nu} \varepsilon_{\omega}(f \otimes g)
    \sum_{n\geq 1} 
    \frac{\lambda_f(n) \lambda_g(n)}{\sqrt n} 
    V_{g, -\omega}\Big(\frac{n}{p^\nu D}\Big).
    \end{split}
    \end{equation}
Here 
  \[
    \varepsilon_{\omega}(f \otimes g)
    = \frac{(4\pi^2)^{2\omega}\Gamma_g\big(\frac12-\omega\big)}{\Gamma_g\big(\frac12+\omega\big)},
    \] and 
    \begin{align}\label{eqn:V-g-omega}
    V_{g, \omega}(y)
    &= \frac{1}{2\pi i} \int_{(3)} H_{g, \omega}(s)
     \zeta_{p^\nu D} (1+2s)  y^{-s} \frac{\mathop{ds}}{s-\omega},    \end{align}
where 
    \[
    H_{g, \omega}(s)
    =(4\pi^2)^{-s} \Gamma_g\Big(\frac12+s\Big) G_{g, \omega}(s) ,
    \]
with
    \[
    \Gamma_g(s)= 
    \Gamma\Big(s+\frac{|k-k'|}{2}\Big)
    \Gamma\Big(s+\frac{k+k'}{2}-1\Big)
    \]
and
    \[
    G_{g, \omega}(s)=
    \frac{(4\pi^2)^{\omega}}{\Gamma_g\big(\frac12+\omega\big)}
    \frac{\xi\big(\frac12+s-\omega\big)^5}{\xi\big(\frac12\big)^5} \frac{P_g(s)}{P_g(\omega)} .
    \]
We recall that $\xi$ is the completed Riemann zeta function and $P_g(s)$ is chosen as an even polynomial wit real coefficients such that $P_g(s) \Gamma_g\big(\frac12+s\big)$ is holomorphic in a region $\Re(s) > -A$ for some $A>\frac12$. For simplicity, we normalize $P_g$ so that $P_g(0)=1$.

Equation \eqref{eq:AF} is a crucial ingredient in our work as it allows us to write the central values $L\left(\frac12+\omega,f\otimes g\right)$ in terms of  rapidly decaying series built from the Fourier coefficients of $f$ and $g$. 
\begin{lem}\label{V bounds}
Let $\omega\in\mathbb{C}$ with $|\Re(\omega)| < \frac{1}{\log(p^\nu)}$. For all $A>0$, we have
    \[
    V_{g, \omega}(x) \ll_A \big(1+|\Im(\omega)|\big)^B x^{-A}.
    \]
Further,
    \[
    V_{g, \omega}(x)
    =    \big(\mathop{\mathrm{Res}}_{s=0}+\mathop{\mathrm{Res}}_{s=\omega}\big) 
    \bigg(H_{g,\omega}(s)  \zeta_{p^\nu D} (1+2s) \frac{x^{-s}}{s-\omega} \bigg)
    + O_{\epsilon, g}\Big((1+|\Im(\omega)|)^B (p^\nu D)^{\frac{3}{32}+\epsilon} x^{\frac14}\Big).
    \]
That is,
        \begin{align*}
    V_{g, \omega}(x)
    =&  
    -\frac{(4\pi^2)^{\omega}} {2\omega} \Big(1-\frac1{p}\Big)  \Big(1-\frac1{D}\Big) \frac{\Gamma_g\left(\frac12\right)P_g(0) \xi\big(\frac12-\omega\big)^5}{\Gamma_g\left(\frac12+\omega\right)P_g(\omega)
    \xi\big(\frac12\big)^5} 
    \\
    &+ \zeta_{p^\nu D} (1+2\omega) x^{-\omega}
    + O_{\epsilon, g}\Big((1+|\Im(\omega)|)^B (p^\nu D)^{\frac{3}{32}+\epsilon} x^{\frac14}\Big). 
    \end{align*}
\end{lem}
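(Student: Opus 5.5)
The plan is to prove Lemma \ref{V bounds} by contour shifting in the integral \eqref{eqn:V-g-omega}, using Stirling's formula \eqref{eqn: stirling-formula} to control $H_{g,\omega}(s)$ on vertical lines.

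\textbf{Decay for large $x$.} Move the line of integration from $\Re s=3$ to $\Re s=A$, with $A$ large. No poles are crossed: $\zeta_{p^\nu D}(1+2s)$ is regular for $\Re s>0$, the factor $1/(s-\omega)$ has its pole at $\Re s=\Re\omega<1/\log p^\nu$, and $P_g(s)\Gamma_g(\frac12+s)$ and $\xi$ are entire there. On the new line:
\begin{itemize}
\item $|\zeta_{p^\nu D}(1+2s)|\le\zeta(1+2A)$;
\item $\Gamma_g(\frac12+s)$ decays exponentially in $|\Im s|$ by \eqref{eqn: stirling-formula};
\item $\xi(\frac12+s-\omega)^5$ grows at most exponentially in $|\Im s|$, and this growth has to be weighed against the decay of $\Gamma_g$;
\item the factor $1/\Gamma_g(\frac12+\omega)$, through Lemma \ref{lem:log derivative gamma g bound} or directly from Stirling, together with $P_g(s)/P_g(\omega)$, accounts for the polynomial factor $(1+|\Im\omega|)^B$.
\end{itemize}
The integral therefore converges absolutely and is $\ll_A(1+|\Im\omega|)^Bx^{-A}$. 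The main care needed is that the exponential factor of $\xi^5$ must be dominated by that of $\Gamma_g$. If it is not, I would instead track the $\Gamma$-factors coming from $\xi$ explicitly; since $\xi(\frac12+s-\omega)$ includes $\Gamma(\frac{s-\omega}{2}+\frac14)$, which also decays, the bound should close.

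\textbf{Asymptotic for small $x$.} Shift the contour to the left, to $\Re s=-\frac14$. The poles crossed are:
\begin{itemize}
\item $s=\omega$, from $1/(s-\omega)$. Here $G_{g,\omega}(\omega)\Gamma_g(\frac12+\omega)(4\pi^2)^{-\omega}=1$ because $P_g(\omega)/P_g(\omega)=1$ and the $\xi$-ratio equals $1$. The residue is $\zeta_{p^\nu D}(1+2\omega)x^{-\omega}$.
\item $s=0$, from $\zeta_{p^\nu D}(1+2s)=\zeta(1+2s)\prod_{q\mid p D}(1-q^{-1-2s})$. Here $\zeta(1+2s)$ has residue $\frac12$, which produces the factor $(1-\frac1p)(1-\frac1D)$. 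Multiplying by $1/(0-\omega)$ and $H_{g,\omega}(0)$, and using $P_g(0)=1$, gives the stated term $-\frac{(4\pi^2)^\omega}{2\omega}(1-\frac1p)(1-\frac1D)\frac{\Gamma_g(\frac12)\xi(\frac12-\omega)^5}{\Gamma_g(\frac12+\omega)P_g(\omega)\xi(\frac12)^5}$.
\end{itemize}
Holomorphy of $P_g(s)\Gamma_g(\frac12+s)$ for $\Re s>-A$ with $A>\frac12$ ensures that no further poles are crossed. The functions $\xi$ are entire. Since $|\Re\omega|<1/\log p^\nu$, the two poles stay separate; when $\omega=0$ they merge, and the formula is read by continuity.

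\textbf{Remaining integral and main obstacle.} On $\Re s=-\frac14$ we have $|x^{-s}|=x^{1/4}$, and Stirling again handles the $\Gamma$ and $\xi$ factors, leaving only $|\zeta_{p^\nu D}(\frac12+2it)|$ to control. I expect the key point to be the bound on the Euler factor $\prod_{q\mid p D}(1-q^{-1/2-2it})$, which naively is $\ll(p^\nu D)^\epsilon$. The paper states a loss of $(p^\nu D)^{3/32+\epsilon}$, which is weaker, so the crude divisor-type bound $\prod_{q\mid pD}(1+q^{-1/2})\ll(pD)^\epsilon$, combined with a polynomial bound for $\zeta(\frac12+2it)$ absorbed by the $\Gamma$-decay, suffices. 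If the bookkeeping is cleaner that way, I would instead shift only to $\Re s=-\frac14+\epsilon'$, tuned so that the $x$-exponent is $\frac14$ and any loss fits within $(p^\nu D)^{3/32+\epsilon}$. This gives the error term $O_{\epsilon,g}((1+|\Im\omega|)^B(p^\nu D)^{3/32+\epsilon}x^{1/4})$.
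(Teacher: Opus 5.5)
Your route is the paper's. The paper gets the two estimates by quoting (4.12) and (4.14) of \cite{KMV RS}, and then evaluates the residues at $s=0$ and $s=\omega$ exactly as you do; your proposal writes out the contour shifts to $\Re s=A$ and $\Re s=-\frac14$ that the paper delegates. (For $D=1$ the factor $1-\frac1D$ must be read as $1$, both in your computation and in the statement, since then only $q=p$ divides $p^\nu D$.) You are also right that nothing like $(p^\nu D)^{3/32}$ is lost: on $\Re s=-\frac14$ the Euler factor $\prod_{q\mid pD}(1-q^{-\frac12-2it})$ is bounded by an absolute constant.

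The step that fails as written is your justification of the factor $(1+|\Im\omega|)^B$. First, $\xi$ does not grow on vertical lines: by Stirling and a polynomial bound for $\zeta$, $\xi(\sigma+it)\ll_\sigma(1+|t|)^{C}e^{-\pi|t|/4}$. Second, $1/\Gamma_g(\frac12+\omega)$ is not polynomially bounded in $\Im\omega$. It has size $e^{\pi|\Im\omega|}$ up to powers of $1+|\Im\omega|$, and Lemma~\ref{lem:log derivative gamma g bound} only covers bounded $\omega$. So if you bound the $s$-integral uniformly in $\omega$ and then multiply by $|\Gamma_g(\frac12+\omega)|^{-1}$, you get exponential, not polynomial, dependence on $\Im\omega$.

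The factors have to be estimated together. Writing $s=\sigma+it$ and $v=\Im\omega$, Stirling gives
\[
\Big|\frac{\Gamma_g(\frac12+s)}{\Gamma_g(\frac12+\omega)}\,\xi\Big(\frac12+s-\omega\Big)^5\Big|
\ll (1+|t|+|v|)^{C}e^{\pi(|v|-|t|)-\frac{5\pi}{4}|t-v|}
\ll (1+|v|)^{C}(1+|t-v|)^{C}e^{-\frac{\pi}{4}|t-v|},
\]
because $|v|-|t|\le|t-v|$. The remaining factors $P_g(s)$, $\zeta_{p^\nu D}(1+2s)$ and $1/(s-\omega)$ are polynomial in $1+|t|\le(1+|v|)(1+|t-v|)$. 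Integrating in $t$ on either line therefore gives $(1+|v|)^B$.

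This is exactly why the fifth power of $\xi$ is used: its decay rate $\frac{5\pi}{4}$, centred at $t=v$, must beat the rate $\pi$ of $\Gamma_g$. Your remark that the exponential factor of $\xi^5$ ``must be dominated by that of $\Gamma_g$'' has this backwards. In the paper's applications $|\omega|\le\eta$, so this uniformity is never used, but it is part of the statement you set out to prove.
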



\begin{proof}
This follows from \cite[Eq.~4.12, Eq.~4.14]{KMV RS}. The value of the first residue is
    \begin{align*}
     \mathop{\mathrm{Res}}_{s=0}
    & \bigg\{H_{g,\omega}(s) \Big(1-\frac{1}{p^{1+2s}}\Big) \Big(1-\frac{1}{D^{1+2s}}\Big) \zeta(1+2s) \frac{x^{-s}}{s-\omega} \bigg\} 
    \\
    &=
     -\frac{1}{2\omega}\Big(1-\frac1{p}\Big) \Big(1-\frac1{D}\Big) H_{g,\omega}(0) 
    = -\frac{1}{2\omega} \Big(1-\frac1{p}\Big)  \Big(1-\frac1{D}\Big) \Gamma_g\Big(\frac12\Big) G_{g, \omega}(0)
    \\
    & = -
    \frac{(4\pi^2)^{\omega}} {2\omega} \Big(1-\frac1{p}\Big) \Big(1-\frac1{D}\Big) \frac{\Gamma_g\left(\frac12\right)}{\Gamma_g\left(\frac12+\omega\right)}
    \frac{\xi\big(\frac12-\omega\big)^5}{\xi\big(\frac12\big)^5} \frac{P_g(0)}{P_g(\omega)} .
    \end{align*}
The other residue can easily be found to be $\zeta_{p^\nu D}(1+2\omega)x^{-\omega}$ by noting that $H_{g,\omega}(\omega)=1$.
\end{proof}


\section{The Twisted First Moment}\label{sec:first moment}

In this section, we let $g$ be a primitive form  in $H^*_{k'}(D)$, where $D=1$ or $D$ is a prime number distinct from the prime $p$. Let $\ell$ be a positive integer such that $p\nmid \ell$. We define 
    \[
    M_g(p^{\nu},\ell, \omega ;1)
    := \sum_{f\in H^*_k(p^\nu)}^{h}
    \lambda_f(\ell)
    L\left(\frac12+\omega, f\otimes g\right).
    \]
Here we recall that $k\geq8$ is an even integer and $\omega$ is complex number satisfying \eqref{eqn:shift-condition}. 

We seek an asymptotic forula for  $M_g(p^{\nu},\ell, \omega ;1)$ in the $\nu$-aspect. By \eqref{eq:AF}, we have
    \begin{equation*} 
    M_g(p^{\nu},\ell, \omega;1)
    = \mathcal{M}_1(\omega)+\frac{(4\pi^2)^{\omega} \Gamma_g(\frac12-\omega)}{(4\pi^2)^{-\omega} \Gamma_g(\frac12+\omega)}  \mathcal{M}_2(\omega),\end{equation*}
    where it is set that
    \begin{align*}
    \mathcal{M}_1(\omega)&:=(p^\nu D)^{- \omega}  \sum_{n \geq 1}
    \frac{\lambda_g(n) }{\sqrt{n}} 
    V_{g, \omega}\Big(\frac{n}{p^\nu D}\Big)
     \sum^h_{f\in H^*_k(p^\nu)}  \lambda_f(n) \lambda_f(\ell),  \end{align*}
     and
     \begin{align*}
    \mathcal{M}_2(\omega) &:=
    (p^\nu D)^{-\omega}  \sum_{n \geq 1}
    \frac{\lambda_g(n) }{\sqrt{n}} 
    V_{g, -\omega}\Big(\frac{n}{p^\nu D}\Big)
     \sum^h_{f\in H^*_k(p^\nu)}  \lambda_f(n) \lambda_f(\ell).
     \end{align*}
 We compute $\mathcal{M}_1(\omega)$ first, as $ \mathcal{M}_2(\omega)$ can be computed similarly. By Theorem \ref{Rouymi}, since $p\nmid \ell$, we can write
    \[
     \mathcal{M}_1(\omega)
     =  \mathcal{M}_1^D(\omega)
     + \mathcal{M}_{1,1}^{ND}(\omega)+ \mathcal{M}_{1,2}^{ND}(\omega),
    \]
where
    \begin{align*}
      \mathcal{M}_1^D(\omega)
     :=  \Big(1-\frac{1}{p}\Big) \frac{(p^\nu D)^{-\omega} \lambda_g(\ell)}{\sqrt\ell} 
    V_{g, \omega}\Big(\frac{\ell }{p^\nu D}\Big), 
    \end{align*}
    \begin{align*}
     \mathcal{M}_{1,1}^{ND}(\omega)
    := 2\pi i^{-k} (p^\nu D)^{-\omega}  
     \sum_{\substack{n\geq 1, \\ (n, p)=1}}
    \frac{\lambda_g(n)}{\sqrt n} 
    V_{g, \omega}\Big(\frac{n}{p^\nu D}\Big)
    \sum_{p^\nu \mid c} 
    \frac{S\big(n, \ell; c\big)}{c}
    J_{k-1}\Big(\frac{4\pi\sqrt{n\ell}}{c}\Big),
    \end{align*}
and 
    \begin{align*}
     \mathcal{M}_{1,2}^{ND}(\omega)
    := -\frac{2\pi i^{-k} (p^\nu D)^{-\omega}}{p}   
     \sum_{\substack{n\geq 1\\ (n,p)=1}}
    \frac{\lambda_g(n)}{\sqrt n} 
    V_{g, \omega}\Big(\frac{n}{p^\nu D}\Big)
    \sum_{p^{\nu-1} \mid c} 
    \frac{S\big(n, \ell; c\big)}{c}
    J_{k-1}\Big(\frac{4\pi\sqrt{n\ell}}{c}\Big).
    \end{align*}
\subsection{Computing the Diagonal Contribution} Using Lemma \ref{V bounds} gives  
  \begin{align*}
    \mathcal{M}_1^D(\omega)
       &= \mathcal{M}_{1, 1}^D(\omega) + \mathcal{M}_{1, 2}^D(\omega)
    + O_{\epsilon, g}\Big((1+|\Im(\omega)|)^B \ell^{-\frac14+\epsilon} (p^\nu D)^{-\frac{5}{32}+\epsilon}\Big)
     \end{align*}
     with 
     \begin{equation*}
     \mathcal{M}_{1, 1}^D(\omega) :=- \Big(1-\frac{1}{p}\Big)^2  \Big(1-\frac1{D}\Big)  \frac{ \lambda_g(\ell)}{\sqrt\ell} 
        \frac{(p^\nu D)^{-\omega} (4\pi^2)^{\omega}} {2\omega} \frac{\Gamma_g\left(\frac12\right)P_g(0) \xi\big(\frac12-\omega\big)^5}{\Gamma_g\left(\frac12+\omega\right)P_g(\omega)
    \xi\big(\frac12\big)^5}
     \end{equation*}
     and 
     \begin{equation*}
     \mathcal{M}_{1, 2}^D(\omega) := \Big(1-\frac{1}{p}\Big) \frac{ \lambda_g(\ell)}{\sqrt\ell} (p^\nu D)^{-\omega} 
     \zeta_{p^\nu D} (1+2\omega)
    \Big(\frac{\ell}{p^\nu D}\Big)^{-\omega}.
     \end{equation*}
 Similarly, we get
     \begin{align*}
    \mathcal{M}_2^D(\omega)
     &= \mathcal{M}_{2, 1}^D(\omega) + \mathcal{M}_{2, 2}^D(\omega)
    + O_{\epsilon, g}\Big((1+|\Im(\omega)|)^B \ell^{-\frac14+\epsilon} (p^\nu D)^{\left(-\frac{5}{32}+\epsilon\right)}\Big),
     \end{align*}
     with $ \mathcal{M}_{2, 1}^D(\omega)= \mathcal{M}_{1, 1}^D(-\omega)$ and $ \mathcal{M}_{2, 2}^D(\omega) =   \mathcal{M}_{1, 2}^D(-\omega)$.
 By setting
     \[
    M_g^D(p^{\nu},\ell, \omega;1)
     := \mathcal{M}_1^D(\omega) + \frac{(4\pi^2)^{\omega} \Gamma_g(\frac12-\omega)}{(4\pi^2)^{-\omega} \Gamma_g(\frac12+\omega)}  \mathcal{M}_2^D(\omega),
     \]
we get
\begin{align*}
M_g^D(p^{\nu},\ell, \omega;1)&=  \left( \mathcal{M}_{1, 1}^D(\omega) + \mathcal{M}_{1, 2}^D(\omega)\right)+ \frac{(4\pi^2)^{\omega} \Gamma_g(\frac12-\omega)}{(4\pi^2)^{-\omega} \Gamma_g(\frac12+\omega)} \left(\mathcal{M}_{2, 1}^D(\omega) 
+ \mathcal{M}_{2, 2}^D(\omega)\right)
+\\&\quad+ O_{\epsilon, g}\Big((1+|\Im(\omega)|)^B \ell^{-\frac14+\epsilon} (p^\nu D)^{\left(-\frac{5}{32}+\epsilon\right)}\Big).
\end{align*}
We have 
  \begin{align*}
     \mathcal{M}_{1, 1}^D(\omega)
    + \frac{(4\pi^2)^{\omega} \Gamma_g(\frac12-\omega)}{(4\pi^2)^{-\omega} \Gamma_g(\frac12+\omega)}  \mathcal{M}_{2, 1}^D(\omega)
    & =      
      \left(4\pi^2\right)^\omega \Big(1-\frac{1}{p}\Big)^2   \Big(1-\frac{1}{D}\Big) \frac{ \lambda_g(\ell)}{\sqrt\ell} 
       \frac{\Gamma_g\left(\frac12\right)P_g(0)\xi\left(\frac12+\omega\right)^5}{\Gamma_g\left(\frac12+\omega\right)P_g(\omega)\xi\big(\frac12\big)^5}
       \\
     &\quad \times \left(\frac{-(p^\nu D)^{-\omega}+(p^\nu D)^\omega}{2\omega}\right),     \end{align*}
and 
  \begin{align*}
    & \mathcal{M}_{1, 2}^D(\omega)
    + \frac{(4\pi^2)^{\omega} \Gamma_g(\frac12-\omega)}{(4\pi^2)^{-\omega} \Gamma_g(\frac12+\omega)}  \mathcal{M}_{2, 2}^D(\omega)
    \\& =     \Big(1-\frac{1}{p}\Big) \frac{ \lambda_g(\ell)}{\sqrt\ell} \left( 
     \zeta_{p^\nu D} (1+2\omega)\ell^{-\omega}
  +  \frac{(4\pi^2)^{\omega} \Gamma_g(\frac12-\omega)}{(4\pi^2)^{-\omega} \Gamma_g(\frac12+\omega)}     \zeta_{p^\nu D} (1-2\omega)
   \ell^{\omega}\right).
    \end{align*}
Hence, the diagonal contribution is as follows.
\begin{align}\label{eqn:1st-moment-D}
M_g^D(p^{\nu},\ell, \omega;1)&=  \left(4\pi^2\right)^\omega \Big(1-\frac{1}{p}\Big)^2   \Big(1-\frac{1}{D}\Big) \frac{ \lambda_g(\ell)}{\sqrt\ell} 
       \frac{\Gamma_g\left(\frac12\right)P_g(0)\xi\left(\frac12+\omega\right)^5}{\Gamma_g\left(\frac12+\omega\right)P_g(\omega)\xi\big(\frac12\big)^5}
        \left(\frac{-(p^\nu D)^{-\omega}+(p^\nu D)^\omega}{2\omega}\right)\nonumber \\&\quad+ \Big(1-\frac{1}{p}\Big) \frac{ \lambda_g(\ell)}{\sqrt\ell} \left( 
     \zeta_{p^\nu D} (1+2\omega)\ell^{-\omega}
  +  \frac{(4\pi^2)^{\omega} \Gamma_g(\frac12-\omega)}{(4\pi^2)^{-\omega} \Gamma_g(\frac12+\omega)}     \zeta_{p^\nu D} (1-2\omega)
   \ell^{\omega}\right)\\&\quad+O_{\epsilon, g}\Big((1+|\Im(\omega)|)^B \ell^{-\frac14+\epsilon} (p^\nu D)^{-\frac{5}{32}+\epsilon}\Big).\nonumber
\end{align}


\subsection{Estimating the Nondiagonal Contribution}

We now bound the nondiagonal terms $ \mathcal{M}_{1,1}^{ND}(\omega)$. Estimations of $ \mathcal{M}_{1,2}^{ND}(\omega)$, $ \mathcal{M}_{2,1}^{ND}(\omega)$ and $ \mathcal{M}_{2,2}^{ND}(\omega)$ can then be written down with suitable modifications.
   
 Recall that 
  \begin{align}\label{eqn:M11ND}
     \mathcal{M}_{1,1}^{ND}(\omega)
    = 2\pi i^{-k} (p^\nu D)^{-\omega}  
     \sum_{\substack{n\geq 1, \\ (n, p)=1}}
    \frac{\lambda_g(n)}{\sqrt n} 
    V_{g, \omega}\Big(\frac{n}{p^\nu D}\Big)
    \sum_{p^\nu \mid c} 
    \frac{S\big(n, \ell; c\big)}{c}
    J_{k-1}\Big(\frac{4\pi\sqrt{n\ell}}{c}\Big).
 \end{align}
The Bessel function $J_{k-1}$ can be written as the inverse Mellin transform     
\begin{equation}\label{eqn:bessel-mellin}
    J_{k-1}(u) = \frac{1}{2\pi i} \int_{(\alpha)} \frac{\Gamma\big(\frac{k-1+z}{2}\big)}{\Gamma\big(\frac{k+1-z}{2}\big)}
    \Big(\frac{u}{2}\Big)^{-z} \mathop{dz}  \qquad \text{for } \,\, 1-k < \alpha <0. 
\end{equation}
In our application, we set $\alpha= -\frac12-\delta$ for a positive $\delta$ to be chosen later, so that $\delta <k-\frac32 $. By substituting \eqref{eqn:V-g-omega} and \eqref{eqn:bessel-mellin} into \eqref{eqn:M11ND}, we get
    \begin{equation}\label{eq:N ND 11}
    \begin{split}
     \mathcal{M}_{1,1}^{ND}(\omega)
    = &\, \frac{2\pi i^{-k}}{(2\pi i)^2}\frac{ (p^\nu D)^{-\omega} (4\pi^2)^{\omega} }{\Gamma_g\big(\frac12+\omega\big)P_g(\omega)} 
        \\&  \times \int_{(A)}(4\pi^2)^{-s}P_g(s) \Gamma_g\Big(\frac12+s\Big)  \zeta_{p^\nu D} (1+2s)  (p^\nu D)^{s} 
    \frac{\xi\big(\frac12+s-\omega\big)^5}{\xi\big(\frac12\big)^5(s-\omega)}     
  \\&  \times
    \int_{(-\frac12-\delta)}   \frac{\Gamma\big(\frac{k-1+z}{2}\big)}{\Gamma\big(\frac{k+1-z}{2}\big)}
    \big(2\pi\sqrt{\ell}\big)^{-z}  \sum_{\substack{n\geq 1, \\ (n, p)=1}}
    \frac{\lambda_g(n)}{n^{\frac12+s+\frac{z}{2}}} \sum_{p^\nu \mid c} 
    \frac{S\big(n, \ell; c\big)}{c^{1-z}}
\mathop{dz} \mathop{ds} .
    \end{split}
    \end{equation}     
By applying \eqref{eq:n p rel prime} to  the summation over $n$ in the above, we separate it as follows.
    \begin{align*}
     \mathcal{S}_\ell(s, z)
    : =
    \sum_{\substack{n\geq 1\\ (n, p)=1}}
    \frac{\lambda_g(n)}{n^{\frac12+s+\frac{z}{2}}}  
    \sum_{p^\nu \mid c} 
    \frac{S\big(n, \ell; c\big)}{c^{1-z}} 
    =\,  \mathcal{S}_\ell(s, z; 0) - \lambda_g(p)  \mathcal{S}_\ell(s, z; 1)+  \mathcal{S}_\ell(s, z;2) ,
    \end{align*}
where we set
    \[
    \mathcal{S}_\ell(s, z; B) := \sum_{n\geq 1}
    \frac{\lambda_g(n)}{(np^B)^{\frac12+s+\frac{z}{2}}}  
    \sum_{p^\nu \mid c} 
    \frac{S\big(np^B, \ell; c\big)}{c^{1-z}}  \quad
    \text{for }\,\, B= 0, 1, 2.
    \]
Each of these three sums is easily seen to be absolutely convergent by using the Weil bound $S(a, b; c) \leq 2\sqrt{c} $. Thus, upon changing the order of summation, we can write 
    \[
     \mathcal{S}_\ell(s, z; B)
     =  \sum_{p^\nu \mid c} \frac{1}{c^{1-z}} \sum_{r \overline{r} \equiv 1 \text{(mod } c )} e \Big(\frac{2\pi i \ell r}{c}\Big)
    \sum_{n\geq 1} e \Big(\frac{2\pi i \overline{r} np^B}{c}\Big) \frac{\lambda_g(n)}{(np^B)^{\frac12+s+\frac{z}{2}}} 
    \quad 
    \text{for} \quad
    B=0, 1, 2.
    \]
Here we recall the definition of twisted $L$-functions. For $(a, c)=1$ and a newform $g$, we have
    \[
    L\Big(\tilde{s}, g, \frac{a}{c} \Big) = \sum_{n\geq 1} \frac{\lambda_g(n) e \big(\frac{an}{c}\big)}{n^{\tilde{s}}} \quad \text{for } \, \Re(\tilde{s}) >1. 
    \]
By letting $\tilde{s}=\frac12+s+\frac{z}{2}$ and choosing $\Re \big(\frac12+s+\frac{z}{2}\big)>1$, that is, $\delta <2 $, we further have
    \[
    \mathcal{S}_\ell(s, z; B)  =  \frac{1}{p^{B(\frac12+s+\frac{z}{2}})}  \sum_{p^\nu \mid c}  \frac{1}{c^{1-z}} \sum_{r \overline{r} \equiv 1 \text{(mod } c )} e \Big(\frac{2\pi i \ell r}{c}\Big)
    L\Big(\frac12+s+\frac{z}{2}, g, \frac{\overline{r} p^B}{c}\Big) \quad 
    \text{for } \,\,\,
    B=0, 1, 2.
    \]
We now move the line of integration in \eqref{eq:N ND 11} from $\Re z=\alpha= -\frac12-\delta$ to $\Re z = -\mathcal{A}$ with $\frac12 + \delta < \mathcal{A}=2A+1+2\delta < k-1$. By the functional equation of the twisted $L$-function, we have the following. For $(c, D)=1$, 
    \[
    L\Big(\tilde{s}, g, \frac{a}{c} \Big) 
    = 
     i^{k'}
    \Big(\frac{c\sqrt{D}}{2\pi} \Big)^{1-2\tilde{s}}
    \,
    \frac{\Gamma\big(1-\tilde{s}+\frac{k'-1}{2}\big)}{\Gamma\big(\tilde{s}+\frac{k'-1}{2}\big)}
    L\Big(1-\tilde{s}, g, \frac{-\overline{aD}}{c} \Big) .
    \]
For $D \mid c$, we have
    \[
    L\Big(\tilde{s}, g, \frac{a}{c} \Big) 
    = 
     i^{k'}
    \Big(\frac{c}{2\pi} \Big)^{1-2\tilde{s}}
    \,
    \frac{\Gamma\big(1-\tilde{s}+\frac{k'-1}{2}\big)}{\Gamma\big(\tilde{s}+\frac{k'-1}{2}\big)}
    L\Big(1-\tilde{s}, g, \frac{-\overline{a}}{c} \Big) .
    \]
Thus, it follows from the two forms of the functional equation that 
    \begin{equation}\label{eq:S ell s z}
    \mathcal{S}_\ell(s, z)
    =  i^{k'} (2\pi)^{2s+z}
    \frac{\Gamma\big(-s-\frac{z}{2}+\frac{k'}{2}\big)}{\Gamma\big(s+\frac{z}{2}+\frac{k'}{2}\big)}
    \sum_{B=0, 1, 2}\Big( \mathcal{S}'_\ell(s, z; B)+ {\mathcal{S}}^{''}_\ell(s, z; B) \Big),
    \end{equation}
where for each $B=0, 1, 2$ we set
    \begin{align*}
    \mathcal{S}'_\ell(s, z; B) &:= 
     \frac{ D^{-s-\frac{z}{2}}  }{p^{B(\frac12+s+\frac{z}{2}})} \sum_{\substack{p^\nu \mid c,\\ (c, D)=1}} \frac{1}{c^{1+2s}}  
    \sum_{n\geq 1} \frac{\lambda_g(n)}{n^{\frac12-s-\frac{z}{2}}}
    \sum_{r\overline{r} \equiv 1 \text{(mod } c )} e \Big(\frac{2\pi i \ell r}{c}\Big) e \Big(-\frac{2\pi i    nr \overline{p^B D} }{c}\Big) ,
    \\
    {\mathcal{S}}''_\ell(s, z; B) &:=    \frac{1}{p^{B(\frac12+s+\frac{z}{2}})}    \sum_{\substack{p^\nu \mid c,\\ D\mid c}} \frac{1}{c^{1+2s}} 
    \sum_{n\geq 1} \frac{\lambda_g(n)}{n^{\frac12-s-\frac{z}{2}}}
    \sum_{r\overline{r} \equiv 1 \text{(mod } c )} e \Big(\frac{2\pi i \ell r}{c}\Big) e \Big(-\frac{2\pi i   nr  \overline{p^B}}{c}\Big) .
    \end{align*}
By trivially bounding the sum over $r$ by $\phi(c)$, we can write the bound, for example, that
    \begin{align*} 
    \mathcal{S}'_\ell(s, z; 0) & \ll 
    D^{-A+\frac{\mathcal{A}}{2}} \sum_{n\geq 1} \frac{1}{n^{1+\delta-\epsilon}}
    \sum_{\substack{p^\nu \mid c,\\ (c, D)=1}} \frac{\phi(c)}{c^{1+2A}} 
    \ll D^{-A+\frac{\mathcal{A}}{2}}  \frac{\phi(p^\nu)}{p^{\nu(1+2A)}} \sum_{c\geq 1} \frac{\phi(c)}{c^{1+2A}} 
    \ll \frac{D^{-A+\frac{\mathcal{A}}{2}} }{p^{2A\nu}}.
    \end{align*}
The same bound holds for all other five terms, so
    \[
    \sum_{B=0, 1, 2}\Big( \mathcal{S}'_\ell(s, z; B) + {\mathcal{S}}^{''}_\ell(s, z; B) \Big)
    \ll \frac{D^{-A+\frac{\mathcal{A}}{2}} }{p^{2A\nu}} .
    \]
In \eqref{eq:N ND 11}, we let $z=-\mathcal{A}+it_1=-(2A+1+2\delta)+it_1$ and $s=A+it_2$. Then by the above bound, we have
    \begin{align*}
    &\frac{1}{2\pi i} \int_{(-\mathcal{A})} 
    \frac{\Gamma\big(\frac{k-1+z}{2}\big)}{\Gamma\Big(\frac{k+1-z}{2}\Big)} 
    (2\pi)^{-z} \ell^{-\frac{z}{2}}
    \mathcal{S}_\ell(s, z) \mathop{dz}
    \\
    &\ll
     \frac{\ell^{\frac{\mathcal{A}}{2}} D^{-A+\frac{\mathcal{A}}{2}} }{p^{2A\nu}}  
     \int_{-\infty}^\infty \big|k+it_1\big|^{-1-\mathcal{A}} \, \big| k' + i\big(2t_2+t_1 \big)\big|^{-2A+\mathcal{A}}  \mathop{dt_1},
    \end{align*}
where we applied Stirling's approximation to  bound the ratios of $\Gamma$-values. Then the above is
    \[
    \ll  \frac{D^{-A+\frac{\mathcal{A}}{2}} \ell^{\frac{\mathcal{A}}{2}}  }{p^{2A\nu} k^C}  |a+it_2|^{-2A+\mathcal{A}} 
    \quad \,\, \text{for } \,\, a:= \max\{k, k'\}.
    \]
Now, recall that we chose $\mathcal{A}-2A= 1+2\delta>1$. By using the above bound in \eqref{eq:N ND 11}, we obtain
    \begin{align*}
   &\mathcal{M}_{1,1}^{ND}(\omega)
    \ll    \frac{D^{-A+\frac{\mathcal{A}}{2}} \ell^{\frac{\mathcal{A}}{2}}  }{p^{2A\nu} k^{\mathcal{A}}}  
    \frac{(p^\nu D)^{-\Re(\omega)+A}}{\left|P_g(\omega)\Gamma_g\big(\frac12+\omega\big)\right|}
    \\
      & \times \int_{-\infty}^\infty 
     \bigg|
     \xi\Big(\frac12+A+it_2-\omega\Big)^5 
    P_g(A+it_2) \Gamma_g\Big(\tfrac12+A+it_2\Big) 
    \zeta_{p^\nu D}(1+2A+2it_2)
    \frac{ |a+it_2|^{-2A+\mathcal{A}} }{A+it_2-\omega} \bigg|
    \mathop{dt_2}.
    \end{align*}
Since we chose $1< A \leq \frac{k}{2}-1-\epsilon$, $\zeta_{p^\nu D} (1+2A+2it_2) \ll 1$. Applying the definition of $ \xi\big(\frac12+A+it_2-\omega\big)$ and bounding the $\Gamma$-factor using Stirling's formula in \eqref{eqn: stirling-formula} give
      \[
     \xi\Big(\frac12+A+it_2-\omega\Big)^5 
     \ll (A+|t_2|)^{10} \, |t_2-\Im(\omega)|^{-\frac54+\frac{5A}{2}-5\Re(\omega)} e^{-\frac{5\pi |t_2-\Im(\omega)|}{4}},
        \]
      for   $|t_2| \geq 1$ provided that $|\omega|$ is bounded.
      
      By combining these bounds, we obtain
 \begin{align*}
     \mathcal{M}_{1,1}^{ND}(\omega)
    \ll  &\,  \frac{D^{\frac{\mathcal{A}}{2}-\Re(\omega)} \ell^{\frac{\mathcal{A}}{2}}  }{p^{\nu(A+\Re(\omega))} k^{\mathcal{A}}}  \
    \\
    &\times \int_{-\infty}^\infty 
     \bigg| \frac{ P_g(A+it_2) \Gamma_g\big(\frac12+A+it_2\big) }{P_g(\omega)\Gamma_g\big(\frac12+\omega\big)} \bigg|
     \frac{ |a+it_2|^{-2A+\mathcal{A}}(A+|t_2|)^{\frac{31}{4}+\frac{5A}{2}-5\Re(\omega)} }
      {e^{\frac{5\pi |t_2-\Im(\omega)|}{4}}}     \mathop{dt_2}.
    \end{align*}
Since $P_g(0)=1$, it follows that $P_g(\omega)\gg 1$ provided that $|\omega|$ is sufficiently small. In fact, one could fix a choice of $P_g$ so that $P_g(\omega)\geq \frac12$. Since $P_g$ is a polynomial, we have 
     \[
    P_g(A+it_2) \ll (1+|t_2|)^{K+1} \quad \text{for some }\, K>0.
    \]
By definition, for $k>k'$
    \begin{align*}
    \frac{\Gamma_g\big(\frac12+A+it_2\big) }{\Gamma_g\big(\frac12+\omega\big) }
    = \frac{\Gamma\big(A+it_2+\frac{k-k'+1}{2}\big)
    \Gamma\big(A+it_2+\frac{k+k'-1}{2}\big) }{\Gamma\big(\frac{k-k'+1}{2}+\omega\big)
    \Gamma\big(\frac{k+k'-1}{2}+\omega\big) }.
    \end{align*}
By applying Lemma \ref{lem:log derivative gamma g bound} since $|A| < \frac{1+k-k'}{2}$ with $k>2k'-2$, we obtain
    \begin{align*}
    \frac{\Gamma_g\big(\frac12+A+it_2\big) }{\Gamma_g\big(\frac12\big) }
    \ll 
   k^{-2\Re(\omega)-k+1} \big(k^2+t_2^2\big)^{A+\frac{k-1}{2}}.
    \end{align*}
Hence,
\begin{align}\label{eqn:bound-1st-moment-nd}
     \mathcal{M}_{1,1}^{ND}(\omega)
   & \ll   \frac{D^{\frac{\mathcal{A}}{2}-\Re(\omega)} \ell^{\frac{\mathcal{A}}2}}{p^{\nu(A+\Re(\omega))} k^{2\Re(\omega)}}  
    \int_{-\infty}^\infty 
     \Big(1+\frac{t_2^2}{k^2}\Big)^{\frac{\mathcal{A}}{2}+\frac{k-1}{2}}
      (A+|t_2|)^{\frac{35}{4}+K+\frac{5A}{2}-5\Re(\omega)} e^{-\frac{5\pi |t_2-\Im(\omega)|}{4}} \mathop{dt_2}
    \nonumber  \\&\ll  \frac{D^{A+\frac12+\delta-\Re(\omega)} \ell^{A+\frac12+\delta} }{p^{\nu(A+\Re(\omega))} k^{2\Re(\omega)}} .
    \end{align}
One can similarly obtain the same bound for $ \mathcal{M}_{1,2}^{ND}(\omega)$, $ \mathcal{M}_{2,1}^{ND}(\omega)$ and $ \mathcal{M}_{2,2}^{ND}(\omega).$


\subsection{Proof of Theorem \ref{thm:first twisted moment}} Combining  \eqref{eqn:1st-moment-D} and \eqref{eqn:bound-1st-moment-nd}  gives
\begin{align*}
M_g(p^{\nu},\ell,\omega;1)
&=  \left(4\pi^2\right)^\omega \Big(1-\frac{1}{p}\Big)^2   \Big(1-\frac{1}{D}\Big) \frac{ \lambda_g(\ell)}{\sqrt\ell} 
       \frac{\Gamma_g\left(\frac12\right)P_g(0)\xi\left(\frac12+\omega\right)^5}{\Gamma_g\left(\frac12+\omega\right)P_g(\omega)\xi\big(\frac12\big)^5}
        \frac{(p^\nu D)^\omega-(p^\nu D)^{-\omega}}{2\omega} 
        \\&\quad+ \Big(1-\frac{1}{p}\Big) \frac{ \lambda_g(\ell)}{\sqrt\ell} \left\{ 
     \zeta_{p^\nu D} (1+2\omega)\ell^{-\omega}
  +  \frac{(4\pi^2)^{\omega} \Gamma_g(\frac12-\omega)}{(4\pi^2)^{-\omega} \Gamma_g(\frac12+\omega)}     \zeta_{p^\nu D} (1-2\omega)
   \ell^{\omega}\right\}
   \\&\quad
   +O\Big((1+|\Im(\omega)|)^B \ell^{-\frac14+\epsilon} (p^\nu D)^{\left(-\frac{5}{32}+\epsilon\right)}\Big)
   +O\bigg( \frac{D^{A+\frac12+\delta-\Re(\omega)} \ell^{A+\frac12+\delta}  }{p^{\nu(A+\Re(\omega))} k^{2\Re(\omega)}}\bigg),
\end{align*}
as desired.


\section{The Twisted Second Moment: Initial Setup}\label{sec:second moment}

Let $\ell$ be a positive integer such that $(\ell,p)=1$. Let $\omega=(\omega_1, \omega_2)$ be a pair of complex numbers satisfying \eqref{eqn:shift-condition}. For a fixed primitive form $g\in H_{k'}^*(1)$, we define 
    \[
    M_g(p^{\nu},\ell, \omega; 2)
    = \sum^h_{f\in H^*_k(p^\nu)} 
    \lambda_f(\ell)
    L\Big(\frac12+\omega_1, f\otimes g\Big)
    L\Big(\frac12+\omega_2, f\otimes g\Big).
    \]
We start by applying \eqref{eq:AF} to each $L$-value which gives
    \begin{equation}\label{eq:Mg setup}
    \begin{split} 
    M_g(p^{\nu},\ell, \omega; 2)&= p^{-\nu(\omega_1+\omega_2)} \Big(M(\omega_1,\omega_2)+ \ve_{\omega_1}(f\otimes g)  M(-\omega_1,\omega_2)\\&\hspace{2em}+ \ve_{\omega_2}(f\otimes g)M(\omega_1,-\omega_2)+ \ve_{\omega_1}(f\otimes g)\ve_{\omega_2}(f\otimes g)M(-\omega_1,-\omega_2)\Big),    \end{split}
    \end{equation}
where we set
\[
M(\omega_1,\omega_2):=\sum_{m, n \geq 1}
    \frac{\lambda_g(m) \lambda_g(n)}{\sqrt{mn}} 
    V_{g, \omega_1}\Big(\frac{m}{p^\nu }\Big)
    V_{g, \omega_2}\Big(\frac{n}{p^\nu }\Big)
     \sum^h_{f\in H^*_k(p^\nu)} \lambda_f(m) \lambda_f(n) \lambda_f(\ell).
  \]
  In view of \eqref{eq:Mg setup}  we let
   \[
   M_1:=p^{-\nu(\omega_1+\omega_2)} M(\omega_1,\omega_2),\quad M_2:=p^{-\nu(\omega_1+\omega_2)} M(-\omega_1,\omega_2),
   \] 
   \[
   M_3:=p^{-\nu(\omega_1+\omega_2)} M(\omega_1,-\omega_2), \,\, \text{ and} \quad M_4:=p^{-\nu(\omega_1+\omega_2)} M(-\omega_1,-\omega_2),
   \]
   so that 
   \begin{equation}\label{eqn:Mg setup with Mi}
    M_g(p^{\nu},\ell, \omega; 2)= M_1+ \ve_{\omega_1}(f\otimes g)  M_2+ \ve_{\omega_2}(f\otimes g)M_3+ \ve_{\omega_1}(f\otimes g)\ve_{\omega_2}(f\otimes g)M_4.
   \end{equation}


\subsection{Application of the Trace Formula} 

We shall focus on $M_1$ since $M_j$ for $j=2,3,4$ can be treated identically. By Lemma \ref{Hecke relation}, we have 
        \[
    M_1= 
     p^{-\nu(\omega_1+\omega_2)} 
     \sum_{d\mid \ell} 
     \sum_{m, n \geq 1}
    \frac{\lambda_g(m) \lambda_g(d n)}{\sqrt{mn d}} 
    V_{g, \omega_1}\Big(\frac{m}{p^\nu }\Big)
    V_{g, \omega_2}\Big(\frac{d n}{p^\nu }\Big)
     \sum^h_{f\in H^*_k(p^\nu)} 
     \lambda_f(m)\lambda_f\Big( \frac{n\ell}{d}\Big). 
    \]
Notice that the condition $(d, p)=1$ is dropped since $(\ell, p)=1$ as per our assumption. 

Applying Theorem \ref{Rouymi} separates $M_1$ into a diagonal part and two nondiagonal parts as follows. We have    \[
    M_1
    =
    M_1^{D}+
    M^{ND}_{1,1}+
    M^{ND}_{1,2},
    \]
where
    \[
    M^D_{1} 
    = \Big(1-\frac{1}{p}\Big) \frac{p^{-\nu(\omega_1+\omega_2)} }{\sqrt\ell} 
     \sum_{de= \ell } 
     \sum_{\substack{n\geq 1,\\ (n, p)=1}}
    \frac{\lambda_g (en) \lambda_g(dn)}{n}
    V_{g, \omega_1}\Big(\frac{en}{ p^\nu }\Big)
    V_{g, \omega_2}\Big(\frac{dn}{p^\nu }\Big),
    \]
      \begin{align*}
    M^{ND}_{1,1} 
    =&  \, 2\pi i^{-k} 
    p^{-\nu(\omega_1+\omega_2)} 
     \sum_{d\mid \ell}
     \frac{1}{\sqrt d}
     \sum_{\substack{m, n \geq 1,\\ (mn\ell/d, p)=1}}
    \frac{\lambda_g(m) \lambda_g(n d)}{\sqrt{mn}}
    V_{g, \omega_1}\Big(\frac{m}{p^\nu }\Big) 
    V_{g, \omega_2}\Big(\frac{nd}{p^\nu }\Big)
    \\
    &\times \sum_{p^\nu \mid c} 
    \frac{S\big(m, \frac{n\ell}{d}; c\big)}{c}
    J_{k-1}\Big(\frac{4\pi\sqrt{mn\ell}}{c\sqrt d}\Big),
    \end{align*}
and
    \begin{align*}
    M^{ND}_{1,2} 
    =&\, -\frac{2\pi i^{-k} 
    p^{-\nu(\omega_1+\omega_2)} }{p}  
     \sum_{d\mid \ell} 
     \frac{1}{\sqrt d}
     \sum_{\substack{m, n \geq 1,\\ (mn\ell/d, p)=1}}
    \frac{\lambda_g(m) \lambda_g(n d)}{\sqrt{mn}} 
    V_{g, \omega_1}\Big(\frac{m}{p^\nu }\Big) 
    V_{g, \omega_2}\Big(\frac{nd}{p^\nu }\Big)
    \\
    &\times \sum_{p^{\nu-1} \mid c} 
    \frac{S\big(m, \frac{n\ell}{d}; c\big)}{c}
    J_{k-1}\Big(\frac{4\pi\sqrt{mn\ell}}{c\sqrt d}\Big).
    \end{align*}


\subsection{Diagonal Terms} 

By using Lemma \ref{Hecke relation} in the form
    \[
    \lambda_g(nd)=\sum_{a \mid (d, n)} \mu(a) \lambda_g\Big(\frac{n}{a}\Big)\lambda_g\Big(\frac{d}{a}\Big),
    \]
we further write
    \begin{equation*}\label{eq:MD1}
    M^D_{1} 
    = \Big(1-\frac{1}{p}\Big) \frac{p^{-\nu(\omega_1+\omega_2)} }{\sqrt\ell} 
     \sum_{de=\ell} 
     \sum_{ab=d} \frac{ \mu(a) \lambda_g(b)}{a}
     \sum_{\substack{n\geq 1,\\ (n, p)=1}}
    \frac{\lambda_g(n) \lambda_g(aen) }{n} 
    V_{g, \omega_1}\Big(\frac{aen}{ p^\nu }\Big)
    V_{g, \omega_2}\Big(\frac{adn}{p^\nu }\Big) .
    \end{equation*}
 Using the same notation and following the same argument we get
   \begin{equation*}\label{eq:MD2}
    M^D_{2} 
    = \Big(1-\frac{1}{p}\Big) \frac{p^{-\nu(\omega_1+\omega_2)} }{\sqrt\ell} 
     \sum_{de=\ell} 
     \sum_{ab=d} \frac{ \mu(a) \lambda_g(b)}{a}
     \sum_{\substack{n\geq 1,\\ (n, p)=1}}
    \frac{\lambda_g(n) \lambda_g(aen) }{n} 
    V_{g, -\omega_1}\Big(\frac{aen}{ p^\nu }\Big)
    V_{g, \omega_2}\Big(\frac{adn}{p^\nu }\Big),
    \end{equation*}
    \begin{equation*}\label{eq:MD3}
    M^D_{3} 
    = \Big(1-\frac{1}{p}\Big) \frac{p^{-\nu(\omega_1+\omega_2)} }{\sqrt\ell} 
     \sum_{de=\ell} 
     \sum_{ab=d} \frac{ \mu(a) \lambda_g(b)}{a}
     \sum_{\substack{n\geq 1,\\ (n, p)=1}}
    \frac{\lambda_g(n) \lambda_g(aen) }{n} 
    V_{g, \omega_1}\Big(\frac{aen}{ p^\nu }\Big)
    V_{g, -\omega_2}\Big(\frac{adn}{p^\nu }\Big) ,
    \end{equation*}
and 
\begin{equation*}\label{eq:MD4}
    M^D_{4} 
    = \Big(1-\frac{1}{p}\Big) \frac{p^{-\nu(\omega_1+\omega_2)} }{\sqrt\ell} 
     \sum_{de=\ell} 
     \sum_{ab=d} \frac{ \mu(a) \lambda_g(b)}{a}
     \sum_{\substack{n\geq 1,\\ (n, p)=1}}
    \frac{\lambda_g(n) \lambda_g(aen) }{n} 
    V_{g, -\omega_1}\Big(\frac{aen}{ p^\nu }\Big)
    V_{g, -\omega_2}\Big(\frac{adn}{p^\nu }\Big) .
    \end{equation*}

We thus arrive at the diagonal contribution to $M_g(p^{\nu},\ell, \omega; 2)$ which we denote by $M^{D}$ and calculate as
\begin{equation*}\label{eqn:MD}
    M^{D}= M^{D}_g(p^{\nu},\ell, \omega; 2)=M^{D}_1+ \ve_{\omega_1}(f\otimes g)  M^{D}_2+ \ve_{\omega_2}(f\otimes g)M^{D}_3+ \ve_{\omega_1}(f\otimes g)\ve_{\omega_2}(f\otimes g)M^{D}_4.
   \end{equation*}

    
\subsection{Nondiagonal Terms} 

In what follows we focus on $M^{ND}_{1, 1} $ and $M^{ND}_{1, 2}$ since $M^{ND}_{j,1}$ and $M^{ND}_{j,2}$ for $j=2,3,4$ can be treated similarly. By using Lemma \ref{Hecke relation} and again substituting $de=\ell$, we obtain 
    \begin{align*}
    M^{ND}_{1, 1} 
    =&\,  2\pi i^{-k} 
    p^{-\nu(\omega_1+\omega_2)} 
     \sum_{de= \ell} 
     \frac{1}{\sqrt d}
     \sum_{\substack{ab=d, \\ (a, D)=1}}
     \frac{\mu(a) \lambda_g(b)}{\sqrt a}
     \sum_{\substack{m, n \geq 1,\\ (mn, p)=1}}
    \frac{\lambda_g(m) \lambda_g(n)}{\sqrt{mn}}
     \\
    &\times 
    V_{g, \omega_1}\Big(\frac{m}{p^\nu }\Big) 
    V_{g, \omega_2}\Big(\frac{adn}{p^\nu }\Big)
    \sum_{p^\nu \mid c} 
    \frac{S\big(m, aen; c\big)}{c}
    J_{k-1}\Big(\frac{4\pi\sqrt{aemn}}{c}\Big) .
    \end{align*}
    
    At this stage, we apply a dyadic partitioning in the $m$ and $n$ summations using  smooth functions $\eta_M(x)$ that are compactly supported in $[M/2, 2M]$ for some $M$ such that
\[
x^j \eta_M^{(j)}(x) \ll_i 1 \quad \text{for any } j \geq 0,
\]
and define
\[
\eta(x) := \sum_{M \geq 1} \eta_M(x)=\begin{cases}0& x \leq \tfrac{1}{2}\\ 1& x\geq1.\end{cases}
\]
We also require that
\[
\sum_{M \leq X} 1 \ll \log X.
\]
  We define the functions 
    \[
    F_{M, N}(m, n) := \frac{1}{\sqrt{mn}}  \eta_M(m) \eta_N(n)
    V_{g, \omega_1}\Big(\frac{m}{p^\nu }\Big) 
    V_{g, \omega_2}\Big(\frac{adn}{p^\nu }\Big)
    \quad
    \text{and} 
    \quad
    F(m, n) := \sum_{M, N} F_{M, N}(m, n) .
    \]
This allows us to write
    \begin{equation}\label{eq:M ND 11} 
    \begin{split}
    M^{ND}_{1, 1}    
    = &\,  \frac{2\pi i^{-k} }{
    p^{\nu(\omega_1+\omega_2)}} 
     \sum_{de= \ell} 
     \frac{1}{\sqrt d}
     \sum_{ab=d}
     \frac{\mu(a) \lambda_g(b)}{\sqrt a}
     \sum_{\substack{m, n \geq 1,\\ (mn, p)=1}}
    \lambda_g(m) \lambda_g(n)
    \\
    & \times \sum_{M, N} F_{M, N}(m, n)
    \sum_{p^\nu \mid c} 
    \frac{S\big(m, aen; c\big)}{c}
    J_{k-1}\Big(\frac{4\pi\sqrt{aemn}}{c}\Big) .
    \end{split}
    \end{equation}
 We require the following bound for the function $F_{M,N}(x,y)$.
 \begin{lem}[(7.6) in \cite{KMV RS}] 
\label{lem:F bound} 
Suppose $\omega_1,\omega_2\in\mathbb{C}$ satisfy \eqref{eqn:shift-condition}. In particular, $|\omega_1|,|\omega_2|\leq\eta$ for some small $\eta>0$. Define
    \[
    F_{M, N}(x, y) 
    := \frac{1}{\sqrt{xy}}  \eta_M(x) \eta_N(y)
    V_{g, \omega_1}\Big(\frac{x}{p^\nu}\Big) 
    V_{g, \omega_2}\Big(\frac{y}{p^\nu}\Big).
    \]
For all $i, j, A, A' \geq 0$, we have
    \[
    x^i y^j \frac{\partial^i}{\partial^i x} 
    \frac{\partial^j}{\partial^j y}
    F_{M, N}(x, y)
    \ll_{\eta}(MN)^{-\frac12} 
    (\log(p^\nu ))^{i+j}
    \Big(\frac{p^\nu}{x}\Big)^A
    \Big(\frac{p^\nu}{ady}\Big)^{A'}.
    \]
\end{lem}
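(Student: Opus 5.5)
The plan is to reduce the estimate to bounds on $V_{g,\omega}$ and its derivatives, using the Leibniz rule. On the support of $F_{M,N}$ we have $x\asymp M$ and $y\asymp N$. The function $F_{M,N}$ is a product of four one-variable factors in $x$ and $y$:
\[
x^{-1/2}\eta_M(x)V_{g,\omega_1}\Big(\frac{x}{p^\nu}\Big)
\quad\text{and}\quad
y^{-1/2}\eta_N(y)V_{g,\omega_2}\Big(\frac{ady}{p^\nu}\Big),
\]
where the second argument is $ady/p^\nu$, as in the definition used in \eqref{eq:M ND 11}. The $x$- and $y$-derivatives therefore separate, and it suffices to prove the one-variable estimate
\[
x^i\frac{d^i}{dx^i}\Big(x^{-1/2}\eta_M(x)V_{g,\omega}(x/X)\Big)\ll M^{-1/2}(\log p^\nu)^i (X/x)^A
\]
for $X=p^\nu$ or $X=p^\nu/(ad)$, and then multiply the two estimates.

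By Leibniz, $x^i\partial_x^i$ of the product is a combination of terms of the form
\[
\big(x^{i_1}\partial^{i_1}x^{-1/2}\big)\big(x^{i_2}\partial^{i_2}\eta_M\big)\big(x^{i_3}\partial^{i_3}V_{g,\omega}(x/X)\big).
\]
The first two factors are harmless. One has $x^{i_1}\partial^{i_1}x^{-1/2}\ll x^{-1/2}\asymp M^{-1/2}$, and $x^{i_2}\eta_M^{(i_2)}\ll 1$ by the assumption on the dyadic partition. The whole argument therefore rests on showing
\[
u^{j}V_{g,\omega}^{(j)}(u)\ll (\log p^\nu)^j\,u^{-A}
\]
uniformly for all $u>0$ and every $A\ge 0$. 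Here I use that $x^{j}\partial_x^j V(x/X)=u^jV^{(j)}(u)$ with $u=x/X$.

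To prove this, I differentiate under the integral \eqref{eqn:V-g-omega}. Applying $(u\,d/du)^j$ to $u^{-s}$ produces the factor $(-s)^j$, and $u^jV^{(j)}$ is a linear combination of $(u\,d/du)^r V$ with $r\le j$. This gives the representation
\[
\frac{1}{2\pi i}\int_{(c)}H_{g,\omega}(s)\zeta_{p^\nu}(1+2s)\,(-s)^r\,u^{-s}\,\frac{ds}{s-\omega}.
\]
I split into two ranges of $u$.

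\textbf{Case $u\ge1$.} I shift the contour to $\Re s=A$. The rapid decay of $H_{g,\omega}$, coming from $\Gamma_g$ and the $\xi^5$ factor with $|\omega|\le\eta$, absorbs the polynomial factor $|s|^r$. This yields $\ll u^{-A}$.

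\textbf{Case $u<1$.} Here $(X/x)^A\ge1$, so it suffices to show the quantity is $\ll(\log p^\nu)^j$. I move the contour to $\Re s=-1/2+\epsilon$, inside the region where $P_g\Gamma_g$ is holomorphic. On the way, the poles at $s=0$ and $s=\omega$ contribute residues. The residue at $s=\omega$ is $\zeta_{p^\nu}(1+2\omega)(-\omega)^r u^{-\omega}$. Since $|\omega|\le\eta$ and $|\Re\omega|<1/\log p^\nu$, we have $|u^{-\omega}|\le u^{-|\Re\omega|}$, which stays bounded as long as $u\ge p^{-O(\nu)}$. The residue at $s=0$ is bounded in $u$. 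For very small $u$ the factor $u^{-\Re\omega}$ could grow, but only like $u^{-1/\log p^\nu}$, and this is absorbed into $(X/x)^A$ for any $A>0$.

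\textbf{Main obstacle.} The delicate point is the appearance of the factor $(\log p^\nu)^{i+j}$. It comes from the near-coalescence of the poles at $s=0$ and $s=\omega$ when $\omega$ is small. The difference-quotient structure in $\zeta_{p^\nu}(1+2s)/(s-\omega)$ produces, after differentiation, terms like $(u^{-\omega}-u^0)/\omega$ and their derivatives. These are bounded by powers of $\log(1/u)$, which on the relevant range $u\gg p^{-\nu}$ is $\ll\log p^\nu$. In carrying this out I would keep track of the fact that each $u\,d/du$ brings down at most one factor of size $|\omega|+\log(1/u)\ll\log p^\nu$. Alternatively, following \cite[(7.6)]{KMV RS}, one can simply cite their computation.

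Finally, I multiply the $x$- and $y$-estimates. The resulting bound carries $(MN)^{-1/2}$, and the constants depend only on $\eta$ through uniform control of $H_{g,\omega}$ and $1/(\Gamma_g(\frac12+\omega)P_g(\omega))$ for $|\omega|\le\eta$.
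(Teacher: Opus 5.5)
The paper gives no argument of its own here; it only cites \cite[(7.6)]{KMV RS}. Your Leibniz reduction, the bounds for $x^{-1/2}$ and $\eta_M$, and the case $u\ge 1$ are fine. You are also right that the second factor should be $V_{g,\omega_2}(ady/p^\nu)$, as in \eqref{eq:M ND 11}.

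\textbf{The gap is at $j=0$, $u<1$.} You assert that the residues at $s=0$ and $s=\omega$ are bounded. Uniformly in $|\omega|\le\eta$, however, each has size $1/|\omega|$. The residue at $s=\omega$ is $\zeta_{p^\nu}(1+2\omega)u^{-\omega}$ with $\zeta_{p^\nu}(1+2\omega)=\frac{1-1/p}{2\omega}+O(1)$. The residue at $s=0$ is $-\frac{1-1/p}{2\omega}H_{g,\omega}(0)$ with $H_{g,\omega}(0)=1+O(|\omega|)$. Only their sum is controlled, and it equals $\frac{1-1/p}{2}\cdot\frac{u^{-\omega}-1}{\omega}+O(1)$. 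For $u\asymp p^{-\nu}$ this tends to $\frac{1-1/p}{2}\,\nu\log p$ as $\omega\to 0$.

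So your claim $V_{g,\omega}(u)\ll 1$ for $u<1$ is false. The lemma as printed also fails at $i=j=A=A'=0$. Take $M=N\asymp 1$, $x=y=1$, $ad=1$: then $F_{M,N}(1,1)\asymp(\nu\log p)^2$ as $\omega_1,\omega_2\to 0$, while the right-hand side is $O(1)$. No argument can give the bound exactly as stated.

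\textbf{Your explanation of the logarithm is reversed.} The logarithm lives in the undifferentiated $V$, and differentiation removes it. Write $u^j\frac{d^j}{du^j}=\vartheta(\vartheta-1)\cdots(\vartheta-j+1)$ with $\vartheta=u\frac{d}{du}$. For $j\ge 1$ every term carries a factor $-s$, which cancels the pole of $\zeta_{p^\nu}(1+2s)$ at $s=0$. Only the residue at $s=\omega$ survives, and it is $\ll|\omega\,\zeta_{p^\nu}(1+2\omega)|\,|u^{-\omega}|\ll 1$ for $u\gg p^{-\nu}$. Compare $u\frac{d}{du}\frac{u^{-\omega}-1}{\omega}=-u^{-\omega}$.

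Hence, on $u\gg p^{-\nu}$:
\begin{itemize}
\item $u^jV^{(j)}_{g,\omega}(u)\ll_{j,A}u^{-A}$ for $j\ge 1$;
\item $V_{g,\omega}(u)\ll_A(\log p^\nu)\,u^{-A}$, using $|(u^{-\omega}-1)/\omega|\le\log(1/u)\max(1,u^{-\Re\omega})$.
\end{itemize}
Feeding these into your Leibniz argument proves the estimate with $(\log p^\nu)^{i+j}$ replaced by $(\log p^\nu)^{2}$. That corrected form is what is actually true. It suffices for every later use in the paper, where such logarithms are absorbed into $p^{\nu\epsilon}$ or $p^{-\nu\mathcal{C}}$.
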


To shorten the expression for $M^{ND}_{1, 1}$ even further, we set
    \begin{equation}\label{eq:T M N c}
    T_{M, N}(c):=
    c \sum_{\substack{m, n \geq 1,\\ (mn, p)=1}}
    \lambda_g(m) \lambda_g(n)
    F_{M, N}(m, n) S\big(m, aen ; c\big)
    J_{k-1}\Big(\frac{4\pi\sqrt{aemn}}{c}\Big)  
    \end{equation}   
    and
    \[
    T(c):= \sum_{M, N \geq 1} T_{M, N}(c).
    \]
Using this notation in \eqref{eq:M ND 11} gives
    \begin{equation}\label{eq:MND in Tc}
    M^{ND}_{1, 1} 
    = \frac{2\pi i^{-k}}{ p^{\nu(\omega_1+\omega_2)} }
     \sum_{d\mid \ell} \frac{1}{\sqrt d} 
      \sum_{ab=d}
     \frac{\mu(a) \lambda_g(b)}{\sqrt a}
     \sum_{p^\nu \mid c} \frac{T(c)}{c^2}.
     \end{equation}
    Similarly, we can write
    \begin{equation}\label{eq:MND in Tc 12}
    M^{ND}_{1, 2} 
    = -\frac{2\pi i^{-k}}{p^{1+\nu(\omega_1+\omega_2)}}  
     \sum_{d\mid \ell} \frac{1}{\sqrt d} 
      \sum_{ab=d}
     \frac{\mu(a) \lambda_g(b)}{\sqrt a} \sum_{p^{\nu-1} \mid c} \frac{T(c)}{c^2} .
    \end{equation}


Observe that the sum over $m, n$ in \eqref{eq:T M N c} has the restriction $(mn, p)=1$. We handle this condition by an inclusion-exclusion argument and obtain three sums over $m$ and $n$ as follows. We have
    \begin{align*}
  T_{M, N}(c)    &=\bigg\{ \sum_{m, n \geq 1}-
    \sum_{\substack{m, n \geq 1,\\ p \mid n}} 
    - \sum_{\substack{m, n \geq 1, \\ p\mid m, p\nmid n}} \bigg\}
    \lambda_g(m) \lambda_g(n)
    F_{M, N}(m, n) S\big(m, aen; c\big)
    J_{k-1}\Big(\frac{4\pi\sqrt{aemn}}{c}\Big).
    \end{align*}
Here, the sum over $m,n\geq 1, p\mid m, p \nmid n$ is $0$ by Lemma \ref{Kloosterman vanishes}. To the sum over $m,n\geq 1$ with $ p \mid n$, we apply \eqref{eq:n p rel prime} to get
    \begin{equation*}\label{eq:TMN as sum}
    \begin{split}
    T_{M, N}(c) 
    &=  c \sum_{m, n \geq 1}
    \lambda_g(m) \lambda_g(n)
    F_{M, N}(m, n) S\big(m, aen; c\big)
    J_{k-1}\Big(\frac{4\pi\sqrt{aemn}}{c}\Big) \\
    &-c\, \lambda_g(p)   \sum_{m, n \geq 1}
    \lambda_g(m) \lambda_g(n)
    F_{M, N}(m, np) S\big(m, aenp; c\big)
    J_{k-1}\Big(\frac{4\pi\sqrt{aemnp}}{c}\Big) \\
    &+c \sum_{m, n \geq 1}
    \lambda_g(m) \lambda_g(n)
    F_{M, N}(m, np^2) S\big(m, aenp^2; c\big)
    J_{k-1}\Big(\frac{4\pi\sqrt{aemnp^2}}{c}\Big). 
       \end{split}
    \end{equation*}
We set \begin{equation}\label{eqn:TScB}T_{M,N}(c,B)=c\sum_{m, n \geq 1}
    \lambda_g(m) \lambda_g(n)
    F_{M, N}(m, np^B) S\big(m, aenp^B; c\big)
    J_{k-1}\Big(\frac{4\pi\sqrt{aemnp^B}}{c}\Big)
    \end{equation} so that 
\begin{equation}\label{eqn:TMNc-TS}T_{M, N}(c) =T_{M,N}(c, 0)- \lambda_g(p)
    T_{M,N}(c, 1) +
     T_{M,N}(c, 2).
     \end{equation}


\section{Estimation of the Nondiagonal Contribution}\label{sec:second moment-1}

In this section we will  estimate the nondiagonal terms $M^{ND}_{j, 1}$ and $M^{ND}_{j, 2}$ while focussing on the case $j=1$ for simplicity.


\subsection{Restricting the Range of $M, N$ and $c$}

We will truncate the sums over $M, N$ and $c$ in $M^{ND}_{1, 1}$ and $M^{ND}_{1, 2}$ up to an error term that will later be ensured to be small.

Our first lemma shows that the contribution from large $M$ or large $N$ to the nondiagonal terms will be small with respect to the size of $p^\nu$.


\begin{lem}\label{restrict M N}
For $\delta=0$ or $1$, we have 
    \[
    \sum_{\max\{M, N\} \gg p^{\nu(1+\epsilon)} }
    \sum_{p^{\nu-\delta} \mid c} \frac{T_{M, N}(c) }{c^2} 
    \ll_{\epsilon, \mathcal{C}, v} (ae)^{\frac14+\epsilon}  p^{-\nu \mathcal{C}}
    \]
for any positive constant $\mathcal{C}$. 
\end{lem}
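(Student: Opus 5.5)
The plan is to bound $T_{M,N}(c)$ trivially, using the rapid decay of $F_{M,N}$ from Lemma \ref{lem:F bound} whenever $M$ or $N$ exceeds $p^{\nu(1+\epsilon)}$. The sum over $c$ is then controlled using the decay of $J_{k-1}$ near zero, via Lemma \ref{lem:Bessel}.

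First, write $T_{M,N}(c)$ as in \eqref{eq:T M N c} and insert absolute values. We use Weil's bound \eqref{eqn:weil-bound}, $S(m,aen;c)\ll (m,aen,c)^{1/2}\tau(c)c^{1/2}$, together with Lemma \ref{lem:Bessel} with $j_1=0$, which gives $J_{k-1}(x)\ll \min(x^{k-1},x^{-1/2})$. For the coefficients we use Deligne's bound $\lambda_g(n)\ll n^{\epsilon}$ (or Cauchy--Schwarz with Rankin--Selberg).

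Suppose $M\gg p^{\nu(1+\epsilon)}$; the case of large $N$ is symmetric. Taking $i=j=0$ and $A'=0$ in Lemma \ref{lem:F bound} gives
\[
F_{M,N}(m,n)\ll (MN)^{-1/2}(p^\nu/M)^{A},
\]
and $(p^\nu/M)^A\le p^{-\nu\epsilon A}$ for arbitrary $A$. Since $m\asymp M$ and $n\asymp N$ on the support, with $N$ arbitrary, this yields
\[
T_{M,N}(c)\ll c^{3/2+\epsilon}(MN)^{1/2+\epsilon}\,(p^\nu/M)^{A}\,\min\Big(\big(\tfrac{\sqrt{aeMN}}{c}\big)^{k-1},\big(\tfrac{\sqrt{aeMN}}{c}\big)^{-1/2}\Big).
\]
To make the $N$-sum converge when $N$ is large, we also use $A'>0$, which gives the factor $(p^\nu/(adN))^{A'}$.

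Next comes the sum over $c$ with $p^{\nu-\delta}\mid c$, weighted by $c^{-2}$. Where $c\gg\sqrt{aeMN}$, we use $J_{k-1}(x)\ll x^{k-1}$, so the summand is $\ll c^{-1/2-(k-1)+\epsilon}(aeMN)^{(k-1)/2}$; since $k\ge 2$ this converges. Where $c\ll\sqrt{aeMN}$, we use the $x^{-1/2}$ bound, giving terms $\ll c^{-1/2+1/2}(aeMN)^{-1/4}$. Summing over $c\le\sqrt{aeMN}$ then produces at most a polynomial factor $(aeMN)^{1/4+\epsilon}$, which is the source of the $(ae)^{1/4+\epsilon}$ in the statement. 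Altogether,
\[
\sum_{p^{\nu-\delta}\mid c}\frac{|T_{M,N}(c)|}{c^2}\ll (ae)^{1/4+\epsilon}(MN)^{B}\Big(\frac{p^\nu}{M}\Big)^{A}\Big(1+\frac{p^\nu}{N}\Big)^{-A'}
\]
for some fixed exponent $B$.

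The main obstacle is making sure these polynomial losses in $M$ and $N$ are absorbed by the decay factors. Choose $A$ much larger than $B+\mathcal{C}/\epsilon$. Then $M^{B}(p^\nu/M)^A\ll M^{-1}p^{-\nu\mathcal{C}}$ for $M\gg p^{\nu(1+\epsilon)}$, since $M/p^\nu\ge p^{\nu\epsilon}$. The powers of $N$ are handled by the factor $(p^\nu/N)^{A'}$ when $N>p^\nu$; when $N\le p^\nu$, they cost only $p^{\nu B}$, which is absorbed by enlarging $A$. The dyadic sums over $M$ and $N$ are geometric, so they converge. The case of large $N$ is identical, with the roles of $A$ and $A'$ swapped and the factor $ad$ harmless. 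This gives the bound $(ae)^{1/4+\epsilon}p^{-\nu\mathcal{C}}$ claimed in Lemma \ref{restrict M N}.
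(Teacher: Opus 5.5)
Your proposal is correct and follows essentially the same route as the paper. Both use Weil's bound and Lemma \ref{lem:Bessel}, splitting the $c$-sum at the transition point ($\sqrt{aemn}$ in the paper, $\sqrt{aeMN}$ for you) to produce the factor $(aemn)^{1/4+\epsilon}$, and then Deligne's bound and Lemma \ref{lem:F bound} with $A$ taken large in terms of $\mathcal{C}/\epsilon$. One small slip: the factor $(1+p^\nu/N)^{-A'}$ in your combined display is reversed and should read $\min\{1,(p^\nu/(adN))^{A'}\}\asymp(1+adN/p^\nu)^{-A'}$, which is what your final paragraph actually uses.
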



\begin{proof}
We recall first that $T_{M, N}(c) $ is defined in \eqref{eq:T M N c}.
Without loss of generality, we assume that 
$M\gg p^{\nu(1+\epsilon)}$ and $N\ll p^{\nu(1+\epsilon)}$. The other cases where $M\gg p^{\nu(1+\epsilon)}$ and  $N\gg p^{\nu(1+\epsilon)}$ or $M\ll p^{\nu(1+\epsilon)}$ and $N\gg p^{\nu(1+\epsilon)}$ can be studied similarly. 

We divide the sum over $c$ into $c \gg \sqrt{aemn}$ and $c \ll \sqrt{aemn}$ and bound the $J_{k-1}$-value accordingly. 
Using Weil's bound \eqref{eqn:weil-bound} and applying Lemma \ref{lem:Bessel} gives
    \begin{align*}
    \sum_{\substack{p^{\nu-\delta} \mid c, \\
    c \ll \sqrt{aemn}}} \frac{S\big(m, aen; c\big)}{c} 
    J_{k-1}\Big(\frac{4\pi\sqrt{aemn}}{c}\Big) 
    &\ll 
   \sum_{\substack{p^{\nu-\delta} \mid c, \\
    c \ll \sqrt{aemn}}}
    \frac{(m, aen, c)^{\frac12} \tau(c) }{\sqrt{c}}  
    \Big(\frac{4\pi\sqrt{aemn}}{c}\Big)^{-\frac12}
   \\& \ll
     (aemn)^{-\frac14+\frac{\epsilon}{2}}  
    \sum_{\substack{p^{\nu-\delta} \mid c, \\
    c \ll \sqrt{aemn}}}
    (m, aen, c)^{\frac12} \\&
    \ll(aemn)^{\frac14+\epsilon}p^{-\frac{1}{2}(\nu-\delta)}. 
    \end{align*}
For the range $c \gg \sqrt{aemn}$, we decompose the $c$-sum into dyadic intervals and apply  \eqref{eqn:weil-bound} and Lemma \ref{lem:Bessel} to get    \begin{align*}
    \sum_{\substack{p^{\nu-\delta} \mid c, \\
    c \gg \sqrt{aemn}}} \frac{S\big(m, aen; c\big)}{c} 
    J_{k-1}\Big(\frac{4\pi\sqrt{aemn}}{c}\Big) 
    & \ll
    \sum_{\substack{p^{\nu-\delta} \mid c, \\
    c \gg \sqrt{aemn}}} \frac{(m, aen, c)^{\frac12} \tau(c) }{\sqrt c} 
    \Big(\frac{4\pi\sqrt{aemn}}{c}\Big)^{k-1}
  \\&  \ll
    (aemn)^{\frac14+\epsilon}p^{-\frac{1}{2}(\nu-\delta)}
    \end{align*}
Hence,
    \[
    \sum_{p^{\nu-\delta} \mid c} \frac{S\big(m, aen; c\big)}{c} 
    J_{k-1}\Big(\frac{4\pi\sqrt{aemn}}{c}\Big) 
    \ll  (aemn)^{\frac14+\epsilon}p^{-\frac{1}{2}(\nu-\delta)}.
    \]
Then for any $\epsilon >0$, we have
    \begin{align*}
        \sum_{\substack{M \gg p^{\nu(1+\epsilon)},\\
     N\ll p^{\nu(1+\epsilon)} }}
    \sum_{p^{\nu-\delta} \mid c} \frac{T_{M, N}(c) }{c^2} 
   & \ll  (ae)^{\frac14+\epsilon}p^{-\frac{1}{2}(\nu-\delta)}
    \sum_{\substack{M \gg p^{\nu(1+\epsilon)},\\
     N\ll p^{\nu(1+\epsilon)} }}
    \sum_{\substack{m, n \geq 1, \\
    (mn, p)=1}}
    |\lambda_g(m) \lambda_g(n)
     F_{M, N}(m, n)| (mn)^{\frac14+\epsilon}
  \\ & \ll_{\epsilon,\eta}  \frac{ (ae)^{\frac14+\epsilon}
    p^{\nu(A+A')-\frac{\nu}{2}+\frac{\delta}{2}} }{(ad)^{A'}}
    \sum_{\substack{M \gg p^{\nu(1+\epsilon)},\\
     N\ll p^{\nu(1+\epsilon)} }}
    (MN)^{-\frac12} 
     \sum_{\substack{m \asymp M\\ n\asymp N}}
    m^{\frac14-A+\epsilon}
    n^{\frac14-A'+\epsilon},
    \end{align*}
  where we used Lemma \ref{lem:F bound} and Deligne's bound $|\lambda_g(m)| \ll_\epsilon m^\epsilon $ for the last inequality.
  Hence,  
  \begin{align*}
    \sum_{\substack{M \gg p^{\nu(1+\epsilon)},\\
     N\ll p^{\nu(1+\epsilon)} }}
    \sum_{p^{\nu-\delta} \mid c} \frac{T_{M, N}(c) }{c^2}  &\ll
     (ae)^{\frac14+\epsilon}
    p^{\nu(A+A')-\frac{\nu}{2}+\frac{\delta}{2}}
    \sum_{\substack{M \gg p^{\nu(1+\epsilon)},\\
     N\ll p^{\nu(1+\epsilon)} }}
    (MN)^{-\frac12} 
    M^{\frac54+\epsilon-A}
    N^{\frac54+\epsilon-A'}\\
    &\ll 
      (ae)^{\frac14+\epsilon}p^{\frac{\delta}{2}}
    p^{\nu(A'+\frac54-\epsilon A+O(\epsilon))}\sum_{N\ll p^{\nu(1+\epsilon)}}N^{\frac34-A'+\epsilon}
  \\&  \ll
    (ae)^{\frac14+\epsilon}p^{\frac{\delta}{2}}
p^{-\nu \mathcal{C}},
    \end{align*}
where in the last estimate we chose 
$A> \frac{7}{4}+\epsilon$, $A'> \frac{7}{4}+\epsilon$,
and $A>\frac{A'+\frac{5}{4}+\frac{11}{4}\epsilon+\epsilon^2+\mathcal{C}}{\epsilon}.
$
\end{proof}


Next we focus next on estimating the contribution from values of $c$ that are large in comparison to $M$ and $N$.


\begin{lem}\label{restrict C}
 Let $\delta$ be $0$ or $1$. Assume that $M, N \ll p^{\nu(1+\epsilon)}$. For $C \gg \sqrt{aeMN}$ we have 
      \[
    \sum_{\substack{c\geq C,\\ 
    p^{\nu-\delta} \mid c}} \frac{T_{M, N}(c)}{c^2}
    \ll_{\epsilon, \eta} 
     (ae)^\theta  \left(Cp^\delta\right)^{\epsilon}
    \bigg(\frac{\sqrt{aeMN}}{C}\bigg)^{k-1-2\theta},
    \]
where
\begin{equation}
\theta = \sqrt{\max(0, 1/4 - \lambda_1)} 
\end{equation}
and $\lambda_1$ is the smallest positive eigenvalue for the Hecke congruence subgroup $\Gamma_0(p^{\nu})$. Currently the best known estimate on $\lambda_1$ is due to Kim and Sarnak~\cite{Kim Sarnak}. Accordingly, we can take $\theta = \frac{7}{64}$.

\end{lem}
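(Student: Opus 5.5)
We will follow the Kuznetsov/spectral approach of Kowalski--Michel--VanderKam \cite{KMV RS} (their Lemma on large moduli), adapted to level $p^{\nu-\delta}$. The point is that $\theta$ appears in the bound, so a trivial Weil-bound argument cannot suffice: Weil only gives the exponent $k-1$ with a loss $C^{1/2}$ coming from $\sum_{c}c^{-1/2}$. We therefore package the $c$-sum spectrally.

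\textbf{Step 1: separate variables and smooth in $c$.} Decompose $c\geq C$ dyadically, $c\asymp C'$ with $C'\geq C$, and insert a smooth cutoff $\eta_{C'}(c)$. By Mellin inversion of $F_{M,N}$ (Lemma~\ref{lem:F bound}) we separate $m$ and $n$. With the Hecke coefficients as the sequences $\lambda_g(m)$, $\lambda_g(n)$, we are reduced to estimating
\[
\sum_{m\asymp M}\sum_{n\asymp N} a_m b_n \sum_{p^{\nu-\delta}\mid c}\eta_{C'}(c)\,\frac{S(m,aen;c)}{c}\,J_{k-1}\Big(\frac{4\pi\sqrt{aemn}}{c}\Big).
\]
In the range $C'\gg\sqrt{aeMN}$, the Bessel argument $x=4\pi\sqrt{aemn}/c$ is small. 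We write $J_{k-1}(x)=x^{k-1}W(x)$ with $W$ smooth and of controlled derivatives (this uses Lemma~\ref{lem:Bessel}).

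\textbf{Step 2: spectral expansion.} We apply the Kuznetsov formula for $\Gamma_0(p^{\nu-\delta})$ to the $c$-sum, with test function $\phi(x)=\eta_{C'}(4\pi\sqrt{aemn}/x)\,J_{k-1}(x)$. For small $x$, the Bessel transforms of $\phi$ are dominated by the exceptional spectrum. An eigenvalue $\lambda=\tfrac14-t^2$ with $t\in(0,\theta]$ contributes $\asymp X^{-2t}$ relative to the holomorphic and tempered parts, where $X=\sqrt{aeMN}/C'$. The total contribution is then $\ll X^{k-1}\cdot X^{-2\theta}$, times a factor polynomial in $\log$ and $C'^\epsilon$.

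The Fourier coefficients $\rho_j(m)\rho_j(aen)$ are handled by Cauchy--Schwarz and the spectral large sieve of Deshouillers--Iwaniec in the form of Theorem~\ref{Matomaki}. This costs $(ae)^{\theta}$ from the exceptional Hecke eigenvalues at $ae$, together with $(MN)^{1/2}$, which cancels the factor $(MN)^{-1/2}$ in $F_{M,N}$. The factor $p^{\delta\epsilon}$ comes from the index of the level $p^{\nu-\delta}$ versus $p^\nu$.

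Alternatively, we could apply Theorem~\ref{Matomaki} directly with $r=s=1$, $d=ae$. Here $X_d=\sqrt{aeMN}/C'$, which is small, and the bound gives
\[
\ll C'^{1+\epsilon}(ae)^{7/64}X_d^{-7/32}(1+\sqrt M)(1+\sqrt N)\,\|a\|\|b\|.
\]
Combined with the extra Bessel decay $X_d^{k-1}$ from $W$, this yields exactly the exponent $k-1-2\theta$ with $\theta=7/64$.

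\textbf{Step 3: sum dyadically.} We sum over $C'\geq C$ using $k-1-2\theta>0$; the geometric series is dominated by $C'=C$. Dividing by $c^2\asymp C'^2$ and recalling that $T(c)$ contains the factor $c$ gives the claimed $(ae)^\theta(Cp^\delta)^\epsilon\,(\sqrt{aeMN}/C)^{k-1-2\theta}$.

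\textbf{Main obstacle.} The delicate step is Step 2: correctly matching the normalization of Theorem~\ref{Matomaki}, or of Kuznetsov, to our smooth weight in $(m,n,c)$. This requires rescaling $J_{k-1}$ so that the derivative conditions $|\partial^{j}H|\le M^{-j}N^{-k}C^{-\ell}$ hold, absorbing $\log p^\nu$ losses into $\epsilon$. It also requires verifying that the condition $p^{\nu-\delta}\mid c$ is accounted for by taking $s=p^{\nu-\delta}$, $c\mapsto sc$. I would first try the direct application of Theorem~\ref{Matomaki}, and fall back on the explicit Kuznetsov analysis of \cite{KMV RS} if the normalization fails.
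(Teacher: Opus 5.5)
Your fallback is essentially the paper's argument: split $c\geq C$ dyadically, write $c=p^{\nu-\delta}c'$, and apply Theorem~\ref{Matomaki} directly. It is applied to $F_{M,N}(m,n)J_{k-1}(4\pi\sqrt{aemn}/c)/c$ after dividing by its size $(MN)^{-1/2}X_d^{k-1}/C$, where $X_d=\sqrt{aeMN}/C$. Your smooth cutoff in $c$ is in fact more careful than the paper's sharp dyadic blocks. The Kuznetsov sketch and the Mellin separation of $m,n$ are unnecessary: that theorem already packages Kuznetsov plus the spectral large sieve and accepts a joint weight $H(m,n,c)$.

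However, the instance you actually write down fails. With $r=s=1$ the congruence $p^{\nu-\delta}\mid c$ is not encoded in any smooth weight. More seriously, since $X_d\leq 1$, the factors $\bigl(1+X_d+\sqrt{M/(rs)}\bigr)\bigl(1+X_d+\sqrt{N/(rs)}\bigr)$ are then $\asymp\sqrt{MN}$. You only cancel $\|\mathbf{a}\|_2\|\mathbf{b}\|_2\ll\sqrt{MN}$ against the $(MN)^{-1/2}$ in $F_{M,N}$. So your displayed bound carries an unaccounted extra factor $\sqrt{MN}$, which can be as large as $p^{\nu(1+\epsilon)}$. The analogous factor $(1+M/p^{\nu-\delta})^{1/2}(1+N/p^{\nu-\delta})^{1/2}$, coming from the spectral large sieve at level $p^{\nu-\delta}$, is likewise missing from your Kuznetsov sketch.

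The fix, which you list only as something to verify, is $r=1$, $s=p^{\nu-\delta}$, $d=ae$; these are pairwise coprime because $ae\mid\ell$ and $p\nmid\ell$. With this choice:
\begin{itemize}
\item the prefactor $s$ in Theorem~\ref{Matomaki} cancels the $1/s$ in $1/c=1/(sc')$;
\item $X_d$ is still $\sqrt{aeMN}/C$, since the $c'$-range has length $C/s$;
\item that range contributes only $(C/p^{\nu-\delta})^{\epsilon}$;
\item the troublesome factors become $1+X_d+\sqrt{M/p^{\nu-\delta}}\ll p^{(\delta+\nu\epsilon)/2}$, and likewise with $N$.
\end{itemize}
This last step is exactly where the hypothesis $M,N\ll p^{\nu(1+\epsilon)}$ enters, and your proposal never uses it. It, rather than ``the index of the level,'' is the source of the $p^{\delta}$- and $p^{\nu\epsilon}$-type losses in the statement. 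With this choice, $(1+X_d^{-1})^{7/32}/(1+X_d)\asymp X_d^{-2\theta}$ and $d^{7/64}=(ae)^{\theta}$ give the stated bound on each dyadic block. Your Step~3 (geometric decay from $k-1-2\theta>0$) then sums the blocks, a step the paper leaves implicit.
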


\begin{proof}
We consider 
    \[
    \sum_{\substack{ m\asymp M, n\asymp N, \\
    (mn, p)=1}}
    \lambda_g(m) \lambda_g(n)
    F_{M, N}(m, n)
    \sum_{\substack{p^{\nu-\delta} \mid c,\\ c\geq C}}
    \frac{S\big(m, aen; c\big)}{c} 
    J_{k-1}\Big(\frac{4\pi\sqrt{aemn}}{c}\Big).
    \]
 We decompose the sum over $c$ into dyadic intervals
    \[
    \sum_{\substack{p^{\nu-\delta} \mid c,\\ c\geq C}}
    = \sum_{j=0}^\infty \sum_{\substack{p^{\nu-\delta} \mid c,\\ c\in [2^jC, 2^{j+1}C]}}.
    \]
Upon the change of variables $c'=\frac{c}{p^{\nu-\delta}}$, $C'=\frac{C}{p^{\nu-\delta}}$, we write the inner sum corresponding to $j=0$  as
    \[
    \mathcal{T}_{j=0}
    =
    p^{\delta-\nu}
    \sum_{\substack{m, n \geq 1, \\
    (mn, p)=1}}
    \lambda_g(m) \lambda_g(n)
    F_{M, N}(m, n)
    \sum_{c' \in [C', 2C']}
    \frac{S\big(aen, m; c' p^{\nu-\delta}\big)}{c'} 
    J_{k-1}\Big(\frac{4\pi\sqrt{aemn}}{c' p^{\nu-\delta}}\Big). 
    \]
Now, we apply Theorem \ref{Matomaki} with
    \[
    H(m, n, c')
    = \frac{XF_{M, N}(m, n)}{c'}
    J_{k-1}\Big(\frac{4\pi\sqrt{aemn}}{c' p^{\nu-\delta}}\Big),
    \]
where
    \[
    X=  C'  \sqrt{MN} \bigg(\frac{C' p^{\nu-\delta}}{\sqrt{aeMN}}\bigg)^{k-1} 
    \Big(\frac{M}{p^\nu}\Big)^A
    \Big(\frac{N}{p^\nu}\Big)^{A'}.
    \]
This yields
    \begin{align*}
    \mathcal{T}_{j=0}
    &=  \frac{p^{\delta-\nu}}{X}
    \sum_{\substack{m, n \geq 1, \\
    (mn, p)=1}}
    \lambda_g(m) \lambda_g(n)
    \sum_{c' \in [C', 2C']}
    S\big(aen, m; c' p^{\nu-\delta}\big)  H(m, n, c')\\&
    \ll
     \frac{{C'}^{1+\epsilon} (ae)^{\theta}}{X} 
    \frac{\big(1+{X_{ae}}^{-1}\big)^{2\theta}}{1+X_{ae}}
    \Big(1+X_{ae}+\frac{\sqrt{M}}{\sqrt{p^{\nu-\delta}}}\Big)
    \Big(1+X_{ae}+\frac{\sqrt{N}}{\sqrt{p^{\nu-\delta}}}\Big)
    \\
    &\hspace{2em} \times \Big( \sum_{m\leq M} \lambda_g(m)^2\Big)^{1/2}
    \Big( \sum_{n\leq N} \lambda_g(n)^2\Big)^{1/2},
    \end{align*}
    where $X_{ae}=\frac{\sqrt{aeMN}}{p^{\nu-\delta}C'}$. By using the estimate $ \sum_{n\leq x} \lambda_g(n)^2 \ll x$, substituting the value of $X$, and using the condition $C>\sqrt{aeMN}$, we find that 
    \begin{align*}
    \mathcal{T}_{j=0}
         & \ll_{\eta} 
   {C'}^{\epsilon}  (ae)^{\theta}
     \bigg(\frac{\sqrt{aeMN}}{C}\bigg)^{k-1-2\theta} 
     \Big(\frac{p^\nu}{M} \Big)^A
    \Big(\frac{p^\nu }{N}\Big)^{A'}
    \Big(1+\frac{\sqrt M}{\sqrt{p^{\nu-\delta}}}\Big)
    \Big(1+\frac{\sqrt N}{\sqrt{p^{\nu-\delta}}}\Big).
    \end{align*}
Thus, we proved that
    \begin{align*}
        \mathcal{T}_{j=0}
    \ll_{\eta} 
    (ae)^{\theta}
    p^{\nu\epsilon}
    \left(\frac{C}{p^{\nu-\delta}}\right)^\epsilon
        \bigg(\frac{\sqrt{aeMN}}{C}\bigg)^{k-1-2\theta}. 
    \end{align*}
By repeating the same argument for $c$ with $p^{\nu-\delta}\mid c$ in other dyadic intervals $[2^j C, 2^{j+1} C]$ we complete the proof. 
\end{proof}


\subsection{Off-diagonal and Off-off-diagonal Terms}

In view of \eqref{eqn:TMNc-TS}, we will apply Vorono\"{\i} summation formula to $T_{M,N}(c, B)$ given in \eqref{eqn:TScB} for each $B=0, 1,2$. This will allow us to split the sum of nondiagonal terms  into the sum of off-diagonal and off-off-diagonal terms. 

Applying \eqref{eqn:kloosterman} in \eqref{eqn:TScB} yields
    \[
    T_{M,N}(c, B) 
    = \sum_{ n \geq 1}  \lambda_g(n)
    \sum_{\substack{f(\text{mod } c),\\ (f, c)=1}}
    e\Big(\frac{aefnp^B}{c}\Big)
    \, c\sum_{m \geq 1}
    \lambda_g(m) e\Big(\frac{m\overline{f}}{c}\Big)\tilde{F}_{M, N}(m, np^B),
    \]
where we set
    \[
    \tilde{F}_{M, N}(m, np^B)=
    F_{M, N}(m, np^B) 
    J_{k-1}\Big(\frac{4\pi\sqrt{aemnp^B}}{c}\Big).
    \]
Now, we apply Theorem \ref{Voronoi} to get
    \begin{align*}
    &T_{M,N}(c, B) 
    =  \phi(c)     \sum_{ n \geq 1}  \lambda_g(n) \lambda_g(aen p^B)
    \int_0^\infty \tilde{F}_{M, N}(x, np^B)
    J_g\Big(\tfrac{4\pi\sqrt{aenp^B x}}{c} \Big) \mathop{dx} \\
    &+\sum_{\substack{n\geq 1, \\ h\neq 0}}  
    \lambda_g(n) \lambda_g(aenp^B-h)
    \sum_{\substack{f(\text{mod } c),\\ (f, c)=1}}
    e\Big(\frac{fh}{c}\Big)
    \int_0^\infty \tilde{F}_{M, N}(x, np^B ) 
    J_g\Big(\tfrac{4\pi\sqrt{(aenp^B-h)x}}{c} \Big) \mathop{dx} .
    \end{align*}
For an integer $h$, we set
    \begin{equation*}\label{eqn:Th-def}
    T_{h}(c, B)
    =  \sum_{n\geq 1}  
    \lambda_g(n) \lambda_g(aenp^B - h)
    G_{M,N}\big(aenp^B-h, np^B\big) ,
    \end{equation*}
    where  \begin{equation}\label{eqn:G-def}
    G_{M,N}(z, y)= \int_0^\infty \tilde{F}_{M, N}(x, y) 
    J_g\Big(\frac{4\pi\sqrt{zx}}{c} \Big) \mathop{dx}.
    \end{equation}
With this notation, we can write
    \begin{equation}\label{eq:TS-cB}
    T_{M,N}(c, B)
    =\phi(c) 
    T_0(c,B)
    +  \sum_{h\neq 0} S(0, h ;c) 
    T_{h}(c, B) .
    \end{equation}
Applying Lemma \ref{restrict M N} in \eqref{eq:MND in Tc} and \eqref{eq:MND in Tc 12} we obtain    \begin{align*}
    M^{ND}_{1, 1}
    = &\, 2\pi i^{-k} p^{-\nu (\omega_1+\omega_2)}
    \sum_{de=\ell} \frac{1}{\sqrt d}
    \sum_{ab=d} \frac{\mu(a) \lambda_g(b)}{\sqrt a} \\
    &\times 
    \sum_{M, N \ll p^{\nu(1+\epsilon)}}\sum_{p^\nu \mid c} \frac{1}{c^2}\Big(T_{M,N}(c, 0) - \lambda_g(p)T_{M,N}(c, 1)+T_{M,N}(c, 2) \Big) 
    +O_{\epsilon, v, \mathcal{C}}\big(  \ell^{\frac{1}{4}+\epsilon} p^{-\nu \mathcal{C}} \big), 
    \\
    M^{ND}_{1, 2}
    = &\, -\frac{2\pi i^{-k}}{p} p^{-\nu (\omega_1+\omega_2)}
    \sum_{de=\ell} \frac{1}{\sqrt d}
    \sum_{ab=d} \frac{\mu(a) \lambda_g(b)}{\sqrt a} \\
    &\times 
    \sum_{M, N \ll p^{\nu(1+\epsilon)}}\sum_{p^{\nu-1} \mid c} \frac{1}{c^2}\Big(T_{M,N}(c, 0) - \lambda_g(p)T_{M,N}(c, 1)+T_{M,N}(c, 2) \Big)
    +O_{\epsilon, v, \mathcal{C}}\big( \ell^{\frac{1}{4}+\epsilon} p^{-\nu \mathcal{C}} \big). 
    \end{align*}
Further, by Lemma \ref{restrict C}, we have    \begin{align}\label{MND11-after-Vor}
     M^{ND}_{1, 1}
    &=  \frac{2\pi i^{-k}}{ p^{\nu (\omega_1+\omega_2)}}
    \sum_{\substack{ab=d,\\ de=\ell}} \frac{\mu(a) \lambda_g(b)}{\sqrt{ad}} 
    \sum_{M, N \ll p^{\nu(1+\epsilon)}} \sum_{\substack{p^\nu \mid c,\\ c\leq C }} \frac{TS^-(c, 0) - \lambda_g(p)TS^-(c, 1)+TS^-(c, 2)}{c^2} \nonumber\\
    &+O_{\epsilon, v}\bigg( { \sum_{\substack{ab=d,  \\ de=\ell}} \frac{(ae)^{\theta}b^{\epsilon}}{\sqrt{ad}}}  \sum_{M, N \ll p^{\nu(1+\epsilon)}}C^{\epsilon}\Big(\frac{\sqrt{aeMN}}{C}\Big)^{k-1-2\theta} \bigg), 
    \end{align}
   \begin{align} \label{MND12-after-Vor} M^{ND}_{1, 2}
    &=  \tfrac{-2\pi i^{-k}}{p^{1+\nu (\omega_1+\omega_2)}}
    \sum_{\substack{ab=d,  \\ de=\ell}} \tfrac{\mu(a) \lambda_g(b)}{\sqrt{ad}} 
    \sum_{M, N \ll p^{\nu(1+\epsilon)}}\sum_{\substack{p^{\nu-1} \mid c,\\ c\leq C}} \frac{TS^-(c, 0) - \lambda_g(p)TS^-(c, 1)+TS^-(c, 2)}{c^2} \nonumber\\
    &+O_{\epsilon, v}\bigg( p^{\epsilon}{ \sum_{\substack{ab=d,  \\ de=\ell}} \frac{(ae)^{\theta}b^{\epsilon}}{\sqrt{ad}}} \sum_{M, N \ll p^{\nu(1+\epsilon)}}C^{\epsilon}\Big(\frac{\sqrt{aeMN}}{C}\Big)^{k-1-2\theta} \bigg).
    \end{align}
The error terms in \eqref{MND11-after-Vor} and \eqref{MND12-after-Vor} will be estimated in Section \ref{sec:ood} once we choose $C$ optimally in terms of $p^{\nu}$, $M$, and $N$. 

Note that on the right hand side of \eqref{eq:TS-cB}, the first term  will be referred to as the off-diagonal part and the remaining part will be referred to as the off-off-diagonal part. The contribution of the off-diagonal part of $T_{M,N}(c, B)$ to $M^{ND}_{1, 1}$ will be denoted by $M_{1, 1}^{OD}(B)$, while that of its off-off-diagonal part will be denoted by $M_{1, 1}^{OOD}(B)$. 
More precisely, 
for each $B=0, 1, 2$, we define
     \begin{equation}\label{eq:M 11 OD}
    M^{OD}_{1, 1}(B)
    =  \frac{2\pi i^{-k} }{p^{\nu(\omega_1+\omega_2)}}
    \sum_{\substack{ab=d, \\ de=\ell}} \frac{\mu(a) \lambda_g(b)}{\sqrt {ad}} 
    \sum_{ M, N \ll p^{\nu(1+\epsilon)}} \sum_{\substack{c\leq C, \\ p^\nu  \mid c}} \frac{\phi(c)}{c^2}
    T_0(c,B) 
     \end{equation}
     and  
  \begin{equation}\label{eq:M 11 OOD}
    M^{OOD}_{1, 1}(B)
    =  \frac{2\pi i^{-k} }{p^{\nu(\omega_1+\omega_2)}}
    \sum_{\substack{ab=d, \\ de=\ell}} \frac{\mu(a) \lambda_g(b)}{\sqrt {ad}} 
   \sum_{h\neq0}  
    \sum_{ M, N \ll p^{\nu(1+\epsilon)}}\sum_{\substack{c\leq C, \\ p^\nu  \mid c}} \frac{S(0, h ;c) 
    }{c^2}
     T_{h}(c, B).
     \end{equation}
     We also set
    \begin{equation}\label{eqn:M11OD}
    M^{OD}_{1, 1}
    = M^{OD}_{1, 1}(0)-\lambda_g(p) M^{OD}_{1, 1}(1)+M^{OD}_{1, 1}(2) 
    \end{equation}
    and 
    \begin{equation}\label{eqn:M11OOD}
    M^{OOD}_{1, 1}
    = M^{OOD}_{1, 1}(0)-\lambda_g(p) M^{OOD}_{1, 1}(1)+M^{OOD}_{1, 1}(2).
\end{equation}
The terms $M^{OD}_{1, 2}(B)$, $M^{OOD}_{1, 2}(B)$ and $M^{OD}_{1,2}$ are defined similarly except for the extra factor of $-\frac{1}{p}$ and the condition in the $c$-sum being $p^{\nu-1}\mid c$.

 With this notation, we can write
    \[
    M^{ND}_{1, 1}
    =M^{OD}_{1, 1}+M^{OOD}_{1, 1}\]     and
\[
    M^{ND}_{1, 2}
    =M^{OD}_{1, 2}+M^{OOD}_{1, 2},
    \]
    up to the error terms indicated in \eqref{MND11-after-Vor} and \eqref{MND12-after-Vor}.
    
  In view of \eqref{eqn:Mg setup with Mi}, we denote the off-diagonal contribution to the second mixed moment by
    \begin{align}\label{eqn:off-diag-contr}
     M^{OD}
    &= M^{OD}_g(p^{\nu}, \ell, \omega; 2)\nonumber
     \\&= \,  M^{OD}_{1}
     +\ve_{\omega_1}(f\otimes g)M^{OD}_{2}+\ve_{\omega_2}(f\otimes g)M^{OD}_{3}
      +\ve_{\omega_1}(f\otimes g)\ve_{\omega_2}(f\otimes g) M^{OD}_{4} ,
    \end{align}
    and the off-off-diagonal contribution by 
    \begin{align}\label{eqn:off-off-diag-contr}
     M^{OOD}
    &= M^{OOD}_g(p^{\nu}, \ell, \omega; 2)\nonumber
     \\&= \,  M^{OOD}_{1}
     +\ve_{\omega_1}(f\otimes g)M^{OOD}_{2}+\ve_{\omega_2}(f\otimes g)M^{OOD}_{3}
      +\ve_{\omega_1}(f\otimes g)\ve_{\omega_2}(f\otimes g) M^{OOD}_{4}.
    \end{align}
Here we use the notation \begin{equation*}\label{eqn:MjOD}M^{OD}_{j}=M^{OD}_{j, 1}+ M^{OD}_{j,2}\end{equation*} and \begin{equation*}\label{eqn:MjOOD}M^{OOD}_{j}=M^{OOD}_{j, 1}+ M^{OOD}_{j,2}\end{equation*} 
 for $j=1,\cdots,4$.

 
\subsection{Error Term in the Off-diagonal Contribution}

In the following lemmas, we will successively separate some terms from the off-diagonal contribution at the cost of admissible error terms. 

In order to extend the sum over $c$ in  \eqref{eq:M 11 OD} to all of $c$ with $p^\nu\mid c$, we establish the following lemma.

\begin{lem}\label{lem:extend c}
    Let $\epsilon>0$, $\delta=0, 1$, and $B=0, 1, 2$. For $M,N\ll p^{\nu(1+\epsilon)}$ and $C>2\sqrt{aeMN}$, we have
    \begin{align*}
    &\sum_{\substack{p^{\nu-\delta} \mid c, \\ c> C}}
    \frac{\phi(c)}{c^2}
    \sum_{n \geq 1}  \lambda_g(n) \lambda_g(aenp^B)
G_{M,N}(aenp^B, np^B)
\\&\hspace{2em}\ll_{\epsilon,\eta}   p^{\delta-B}p^{-\nu+\nu\epsilon} (ae)^{-1+\epsilon} \frac{C^{2}}{\sqrt{MN}}\left(\frac{\sqrt{aeMN}}{C}\right)^{k+k'}.
    \end{align*}
\end{lem}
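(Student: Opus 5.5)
The plan is to bound $G_{M,N}(aenp^B, np^B)$ pointwise, using the small-argument behaviour of both Bessel functions, and then sum trivially over $n$ and $c$. Recall from \eqref{eqn:G-def} that
\[
G_{M,N}(aenp^B,np^B)=\int_0^\infty F_{M,N}(x,np^B)\,J_{k-1}\Big(\frac{4\pi\sqrt{aexnp^B}}{c}\Big)J_g\Big(\frac{4\pi\sqrt{aenp^Bx}}{c}\Big)\,dx .
\]
Here $x\asymp M$ and $np^B\asymp N$ on the support of $F_{M,N}$. In the range $c>C>2\sqrt{aeMN}$ both Bessel arguments are $\ll \sqrt{aeMN}/c<1$. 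Lemma \ref{lem:Bessel} (with $j_1=0$) therefore gives
\[
J_{k-1}(z)\ll z^{k-1}, \qquad J_g(z)\ll z^{k'-1}.
\]
Combining these with $F_{M,N}\ll (MN)^{-1/2}$ from Lemma \ref{lem:F bound} (taking $A=A'=0$ there), and integrating over an $x$-interval of length $\asymp M$, I expect
\[
G_{M,N}(aenp^B,np^B)\ll \frac{M}{\sqrt{MN}}\Big(\frac{\sqrt{aeMN}}{c}\Big)^{k+k'-2}.
\]
The factor $(\log p^\nu)^{0}$ is harmless, and any $\log$ losses are absorbed into $p^{\nu\epsilon}$.

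Next I sum over $n$. The support condition $np^B\asymp N$ means $n\ll N p^{-B}$. Deligne's bound $|\lambda_g(n)\lambda_g(aenp^B)|\ll (aeNn)^{\epsilon}$ then gives a factor $N p^{-B}(aeN)^{\epsilon}\ll Np^{-B}(ae)^\epsilon p^{\nu\epsilon}$, using $N\ll p^{\nu(1+\epsilon)}$. For the $c$-sum, I write $c=p^{\nu-\delta}c'$, use $\phi(c)/c^2\le 1/c$, and bound
\[
\sum_{\substack{p^{\nu-\delta}\mid c\\ c>C}} c^{-(k+k'-1)}\ll p^{-(\nu-\delta)}\,C^{-(k+k'-2)} .
\]
This is a tail sum over $c'>C/p^{\nu-\delta}$, compared with an integral. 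If $C<p^{\nu-\delta}$ the first term $c'=1$ dominates instead, but then $c\ge p^{\nu-\delta}>C$ and the same bound still holds, since $c^{-(k+k'-1)}\le p^{-(\nu-\delta)}C^{-(k+k'-2)}$. This accounts for the factor $p^{\delta-\nu}$.

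Collecting the factors, the result is
\[
p^{\delta-\nu-B}p^{\nu\epsilon}(ae)^{\epsilon}\,\frac{MN}{\sqrt{MN}}\,\frac{(aeMN)^{\frac{k+k'-2}{2}}}{C^{k+k'-2}}
= p^{\delta-B}p^{-\nu+\nu\epsilon}(ae)^{-1+\epsilon}\frac{C^2}{\sqrt{MN}}\Big(\frac{\sqrt{aeMN}}{C}\Big)^{k+k'}.
\]
The identity on the right holds because
\[
MN\,(aeMN)^{\frac{k+k'-2}{2}}C^{-(k+k'-2)}=(ae)^{-1}C^{2}\Big(\frac{\sqrt{aeMN}}{C}\Big)^{k+k'}.
\]
This is exactly the claimed bound.

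The only delicate point is the $c$-tail estimate when $C$ is below $p^{\nu-\delta}$; I handle it as above by comparing with the first term. Everything else is routine bookkeeping of exponents.
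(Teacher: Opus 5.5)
Your proposal is correct and follows the paper's argument. The paper first rescales $x\to c^2x$ before bounding, but the ingredients are the same: Lemma~\ref{lem:F bound} with $A=A'=0$, the bounds $J_{k-1}(z)\ll z^{k-1}$ and $J_g(z)\ll z^{k'-1}$ from Lemma~\ref{lem:Bessel}, and trivial summation over $n\asymp Np^{-B}$ and over $c$.

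Two small remarks. The Bessel arguments are only bounded by $8\pi\sqrt{aeMN}/c<4\pi$, not by $1$; this is harmless, since Lemma~\ref{lem:Bessel} gives $J_j(z)\ll z^{j}$ for all $z>0$. Your separate treatment of the $c$-tail when $C<p^{\nu-\delta}$ is a detail the paper leaves implicit.
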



\begin{proof}
Using \eqref{eqn:G-def} and applying the change of variable $x\to c^2x$ we get
    \begin{align}\label{eqn:extend-all-c}
  & \sum_{\substack{p^{\nu-\delta} \mid c, \\ c> C}}
    \frac{\phi(c)}{c^2}
      \sum_{n \geq 1}  \lambda_g(n) \lambda_g(aenp^B)
G_{M,N}(aenp^B, np^B) \nonumber   \\&=    
        \sum_{np^B \asymp N}  \lambda_g(n) \lambda_g(aenp^B) 
   \sum_{\substack{c > C, \\ p^{\nu-\delta} \mid c}}
        \phi(c) \nonumber\\&\hspace{2em}\times\int_0^\infty 
         J_{k-1}\big(4\pi\sqrt{aenp^Bx}\big)
        J_g\big(4\pi\sqrt{aenp^Bx}\big)
       F_{M, N}(c^2x, np^B)
            \mathop{dx}.
    \end{align}
By Lemma \ref{lem:F bound}, we have
    \begin{align*}
    F_{M, N}(c^2x, np^B)
    \ll_{\eta} \frac{ 1 }{\sqrt{MN}}
    \Big(\frac{p^\nu}{c^2 x}\Big)^A
    \Big(\frac{p^\nu }{adnp^B}\Big)^{A'}.
    \end{align*}
Since $c^2x\asymp M$, $np^{B}\asymp N$, and $M,N\ll p^{\nu(1+\epsilon)}$, it is appropriate in this regime to take $A=A'=0$.
Using $x\leq  \frac{2M}{c^2}<\frac{2M}{C^2}$ and $C>2\sqrt{aeMN}$, we see that $x< (2aeN)^{-1}<(aenp^B)^{-1}$. By Lemma \ref{lem:Bessel}, it follows that
\[
     J_{k-1}\big(4\pi\sqrt{aenp^Bx}\big)
     \ll
    \frac{(4\pi\sqrt{aenp^Bx})^{k-1}}{(1+4\pi\sqrt{aenp^Bx})^{k-1/2}}
    \ll (aenp^Bx)^{\frac{k-1}{2}}.
\]
Similarly, $J_g\big(4\pi\sqrt{aenp^Bx}\big)\ll (aenp^Bx)^{\frac{k'-1}{2}}$. Using these bounds in \eqref{eqn:extend-all-c}, we obtain
    \begin{align*}
   & \sum_{\substack{p^{\nu-\delta} \mid c, \\ c> C}}
    \frac{\phi(c)}{c^2}
    \sum_{n \geq 1}  \lambda_g(n) \lambda_g(aenp^B)
G_{M,N}(aenp^B, np^B)  
   \\& \ll_{\epsilon,\eta} \, p^{\delta-B}p^{-\nu+\nu\epsilon} (ae)^{-1+\epsilon}\frac{C^{2}}{\sqrt{MN}}\left(\frac{\sqrt{aeMN}}{C}\right)^{k+k'}.
 \end{align*}
\end{proof}

Now, by using the following lemma, we will be able to extend the double sum over $M, N $  in \eqref{eq:M 11 OD} back to all $M, N$ at the cost of an admissible error term.


\begin{lem}\label{lem:extend MN}
Let $\delta=0, 1$, and suppose that $\ell < p^\nu$. For any positive real number $\mathcal C$, we have
    \begin{align*}
    &\sum_{p^{\nu-\delta} \mid c} \frac{\phi(c)}{c^2} 
    \sum_{\max\{M, N\} \gg p^{\nu(1+\epsilon)}}
    \sum_{n\geq 1}
    \lambda_g(n) \lambda_g(aenp^B) G_{M,N}(aenp^B, np^B)
    \ll_{ \mathcal C,\eta}   p^{-\nu \mathcal C} .
    \end{align*}
\end{lem}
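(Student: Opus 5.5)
We follow the pattern of Lemma \ref{restrict M N}, using the rapid decay of $V_{g,\omega_i}$ recorded in Lemma \ref{lem:F bound}, but now the $c$-sum is handled through the explicit integral $G_{M,N}$. As in the proof of Lemma \ref{lem:extend c}, we write $G_{M,N}(aenp^B,np^B)$ via \eqref{eqn:G-def} and substitute $x\to c^2x$. This gives an integral of $J_{k-1}(4\pi\sqrt{aenp^Bx})\,J_g(4\pi\sqrt{aenp^Bx})\,F_{M,N}(c^2x,np^B)$ against $c^2\,dx$, with the constraint $c^2x\asymp M$ and $np^B\asymp N$.

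By symmetry we treat the case $M\gg p^{\nu(1+\epsilon)}$; the case $N\gg p^{\nu(1+\epsilon)}$ is analogous with the roles of $A$ and $A'$ swapped. We bound $F_{M,N}$ by Lemma \ref{lem:F bound} with a large exponent $A$ in the $M$-variable and $A'=0$ (or small) in the $N$-variable, which gives
\[
F_{M,N}\ll (MN)^{-1/2}(p^\nu/M)^{A}.
\]
For the Bessel factors we use Lemma \ref{lem:Bessel} in the uniform form $|J_{k-1}(y)J_{k'-1}(y)|\ll \min(y^{k+k'-2},\,y^{-1})$, with $y=4\pi\sqrt{aenp^Bx}$. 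We then integrate over $x\in[M/(2c^2),2M/c^2]$ and sum over $c$ with $p^{\nu-\delta}\mid c$ and $\phi(c)\le c$. We split at $c\asymp\sqrt{aeMN}$:
\begin{itemize}
\item For $c\gg\sqrt{aeMN}$, the small-argument bound gives a convergent tail of size $\ll p^{-(\nu-\delta)}(aeMN)^{O(1)}$, since $k+k'>2$.
\item For $c\ll\sqrt{aeMN}$, the bound $y^{-1/2}$ for each Bessel factor gives at most $(aeMN)^{O(1)}$ after summing over $c$.
\end{itemize}
The $n$-sum is bounded trivially via Deligne, $|\lambda_g(n)\lambda_g(aenp^B)|\ll (aenN)^{\epsilon}$, over $n\asymp N/p^B$. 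Altogether each dyadic block contributes
\[
\ll (ae)^{O(1)}\,(MN)^{O(1)}\,(p^\nu/M)^{A}\,p^{\delta}.
\]

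To finish, we use $ae\le \ell<p^\nu$ to absorb the $(ae)^{O(1)}$ factor into a fixed power of $p^\nu$. Since $M\ge p^{\nu(1+\epsilon)}$, we have $(p^\nu/M)^{A}\le M^{-A\epsilon/(1+\epsilon)}$. We therefore choose $A$ so large relative to $\epsilon^{-1}$, $\mathcal C$, and the accumulated $O(1)$ exponents that the dyadic sum over $M\gg p^{\nu(1+\epsilon)}$ converges and is $\ll p^{-\nu\mathcal C}$. The sum over $N$ costs at most another power of $p^\nu$ when $N\ll p^{\nu(1+\epsilon)}$. When $N$ is also large, we take $A'$ large as well, noting $ad\le\ell$.

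The main obstacle is bookkeeping. We must check that every polynomial loss in $M$, $N$, $c$ and $ae$ from the Bessel and $c$-sum bounds is uniform, so that a single large choice of $A$ beats it. In particular, the $c\ll\sqrt{aeMN}$ range must not produce a loss growing faster than a fixed power of $M$; the uniform $y^{-1/2}$ Bessel bound guarantees this.
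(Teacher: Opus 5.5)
Your proposal is correct and follows essentially the paper's argument. Like the paper, you split the $c$-sum at $\sqrt{aeMN}$, use the two regimes of Lemma \ref{lem:Bessel} (product $\ll y^{-1}$ for small $c$, $\ll y^{k+k'-2}$ for large $c$), and bound $F_{M,N}$ by Lemma \ref{lem:F bound} with $A'=0$ and $A$ large relative to $\mathcal C/\epsilon$. You then bound the $n$-sum trivially via Deligne and absorb the $ae$-dependence using $\ell<p^\nu$. Your substitution $x\to c^2x$ is only a cosmetic rewriting of the paper's direct estimate of $G_{M,N}$.
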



\begin{proof}
The proof is similar to that of Lemma \ref{restrict M N}. We will only prove the claim for the case $M \gg p^{\nu(1+\epsilon)}$ and $N \ll p^{\nu(1+\epsilon)}$. The other cases have a contribution of at most the same size. 

First, we consider the sum over $c\leq \sqrt{aeNM}$. Recall that
    \begin{align*}
    G_{M,N}(aenp^B, np^B)
    = \int_0^\infty F_{M, N}(x, np^B)
    J_{k-1}\Big(\frac{4\pi \sqrt{aenp^B x}}{c}\Big)
    J_{g}\Big(\frac{4\pi \sqrt{aenp^B x}}{c}\Big)
    \mathop{dx}.
    \end{align*}
By Lemmas \ref{lem:Bessel} and \ref{lem:F bound}, we have 
    \[
     J_{k-1}\Big(\frac{4\pi \sqrt{aenp^B x}}{c}\Big)J_{g}\Big(\frac{4\pi \sqrt{aenp^B x}}{c}\Big)
  \ll
     \frac{(\sqrt{aenp^B x}/c)^{k+k'-2}}
     {(1+\sqrt{aenp^Bx}/c)^{k+k'-1}} \ll \bigg(\frac{\sqrt{aenp^Bx}}{c}\bigg)^{-1}
    \]
and    
    \[
    F_{M, N}(x, np^B)
    \ll_{\eta}  \frac{1}{\sqrt{MN}}
    \Big(\frac{p^\nu }{x}\Big)^{A}
    \Big(\frac{p^\nu}{adnp^B}\Big)^{A'} \,\, \text{for} \,\, x\in \Big[\frac{M}{2}, 2M\Big], np^B\in \Big[\frac{N}{2}, 2N\Big].
    \]
 It follows that       
 \[
    G_{M,N}(aenp^B, np^B)    \ll_{\eta}  c p^{\nu(A+A')}(ad)^{-A'}(ae)^{-\frac12}M^{-A}N^{-A'-1} , 
    \] 
    and so
    \begin{align*}
 & \sum_{\substack{M \gg p^{\nu(1+\epsilon)}\\N\ll p^{\nu(1+\epsilon)}}}   \sum_{\substack{p^{\nu-\delta} \mid c\\c\leq\sqrt{aeMN}}} \frac{\phi(c)}{c^2} 
    \sum_{n\geq 1}
    \lambda_g(n) \lambda_g(aenp^B) G_{M,N}(aenp^B, np^B)\\&\ll_{\epsilon,\eta}   
   p^{\nu(A+A')-B}(ad)^{-A'}(ae)^{-\frac12+\epsilon}  \sum_{\substack{M \gg p^{\nu(1+\epsilon)}\\N\ll p^{\nu(1+\epsilon)}}}   M^{-A}N^{-A'+\epsilon}\sum_{\substack{p^{\nu-\delta} \mid c\\c\leq\sqrt{aeMN}}} \frac{\phi(c)}{c} \\&\ll_{\epsilon,\eta}  p^{\delta-B} p^{\nu(A+A'-1)}(ad)^{-A'}(ae)^{\epsilon}  \sum_{\substack{M \gg p^{\nu(1+\epsilon)}\\N\ll p^{\nu(1+\epsilon)}}}   M^{\frac12-A}N^{\frac12-A'+\epsilon}.   
   \end{align*}
By choosing $A'=0$ and $A> \frac{\mathcal{C}+1}{\epsilon}+2$ for a given $\mathcal{C}>0$, we obtain
   \begin{align*}
  \sum_{\substack{M \gg p^{\nu(1+\epsilon)}\\N\ll p^{\nu(1+\epsilon)}}}   \sum_{\substack{p^{\nu-\delta} \mid c\\c\leq\sqrt{aeMN}}} \frac{\phi(c)}{c^2} 
    \sum_{n\geq 1}
    \lambda_g(n) \lambda_g(aenp^B) G_{M,N}(aenp^B, np^B)   \ll_{\mathcal{C},\eta}  p^{-\nu \mathcal{C}}.
     \end{align*}       
Following the same argument for $c > \sqrt{aeNM}$ while using the bound  
\[
J_{k-1}\Big(\frac{4\pi \sqrt{aenp^B x}}{c}\Big)J_{g}\Big(\frac{4\pi \sqrt{aenp^B x}}{c}\Big)
  \ll\left(\sqrt{aenp^B x}/c\right)^{k+k'-2}
  \]
  yields  
  \begin{align*}
  \sum_{\substack{M \gg p^{\nu(1+\epsilon)}\\N\ll p^{\nu(1+\epsilon)}}}   \sum_{\substack{p^{\nu-\delta} \mid c\\c>\sqrt{aeMN}}} \frac{\phi(c)}{c^2} 
    \sum_{n\geq 1}
    \lambda_g(n) \lambda_g(aenp^B) G_{M,N}(aenp^B, np^B)   \ll_{\mathcal{C},\eta} p^{-\nu \mathcal{C}},
     \end{align*}   
 for any $\mathcal{C}>0$.
  \end{proof}

By using the following lemma, we will be able to further simplify \eqref{eq:M 11 OD} by showing that the sum $\sum_{M, N}  \eta_M(x) \eta_N(adnp^B) $ can be approximated by $1$ on average.


\begin{lem}\label{lem:remove-dyadic}
Let $\delta= 0, 1$. Provided that $\ell< p^{\nu}$, we have for each $B=0, 1, 2$, 
    \begin{align*}
     \sum_{p^{\nu-\delta}  \mid c}
    &\frac{\phi(c)}{c^2}
    \sum_{n\geq 1}
    \lambda_g(n)\lambda_g(aenp^B) 
     \int_0^\infty 
    \frac{1}{\sqrt{xnp^B}} 
    V_{g, \omega_1}\Big(\tfrac{x}{p^\nu }\Big) 
    V_{g, \omega_2}\Big(\tfrac{adnp^B}{p^\nu }\Big)
    \\
    \times &
    \Big(1-\sum_{M, N}  \eta_M(x) \eta_N(np^B) \Big)
    J_{k-1}\Big(\tfrac{4\pi\sqrt{aenp^Bx}}{c}\Big)
    J_g\Big(\tfrac{4\pi\sqrt{aenp^Bx}}{c} \Big) 
    \mathop{dx} 
    \ll_{\epsilon, \eta}  (ae)^{\epsilon}p^{-\frac{\nu}{2}+\nu\epsilon}  .
    \end{align*}
\end{lem}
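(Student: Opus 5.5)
The key observation is that the partition of unity $\eta(x)=\sum_M\eta_M(x)$ equals $1$ for $x\geq 1$ and vanishes for $x\leq \frac12$. So the weight $1-\sum_{M,N}\eta_M(x)\eta_N(np^B)$ is supported where $x<1$ or $np^B<1$. Since $n\geq1$, the second case is empty. Hence the integrand is supported on $x\in(0,1)$, where $1-\eta(x)$ is bounded by $1$.

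I would bound everything absolutely. On $x<1$ the argument $y=4\pi\sqrt{aenp^Bx}/c$ of the Bessel functions is small precisely when $c\gg \sqrt{aenp^B}$. By Lemma \ref{lem:Bessel},
\[
J_{k-1}(y)J_g(y)\ll \min\big(y^{k+k'-2},\,y^{-1}\big).
\]
For $V_{g,\omega_1}(x/p^\nu)$ with $x<1$ I would use only that $V_{g,\omega_1}$ is bounded up to a logarithm near $0$. This follows from the residue expansion in Lemma \ref{V bounds}, which gives $V_{g,\omega_1}(y)\ll |y^{-\omega_1}|+1/|\omega_1|$; the factor $(p^\nu)^{|\Re\omega_1|}$ is $O(1)$ by \eqref{eqn:shift-condition}. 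The possible $1/\omega$ blow-up would be handled by keeping $\omega$ bounded away or absorbing it into the $\eta$-dependence. For $V_{g,\omega_2}(adnp^B/p^\nu)$ I would use the rapid decay from Lemma \ref{V bounds}, which effectively restricts to $n\ll p^{\nu(1+\epsilon)}/(adp^B)$ at negligible cost.

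Then I would estimate the $x$-integral over $x<1$ of $x^{-1/2}\min(y^{k+k'-2},y^{-1})$. Using $y^{-1}$ in the range $c\le\sqrt{aenp^B}$ gives
\[
\int_0^1 x^{-1/2}\,\frac{c}{\sqrt{aenp^Bx}}\,dx
\]
with a logarithmic singularity, which I would cut off where $y$ becomes small; there I switch to $y^{k+k'-2}$, whose $x$-integral converges. This yields roughly $\ll \min\big(c(aenp^B)^{-1/2}\log,\ (aenp^B/c^2)^{(k+k'-2)/2}\big)$. Next comes the $c$-sum with weight $\phi(c)/c^2\le 1/c$ over $p^{\nu-\delta}\mid c$. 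The large-$c$ range $c\gg\sqrt{aenp^B}$ gives a convergent tail dominated at its start, and the small range $c\ll\sqrt{aenp^B}$ contributes about $p^{-(\nu-\delta)}\cdot\#\{c\}\cdot c/\sqrt{aenp^B}$. Both are $\ll p^{-\nu+\delta}\,(\text{terms of size }1)$, up to the $n^{-1/2}$ factor.

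Finally, I sum over $n\ll p^{\nu(1+\epsilon)}/(adp^B)$ with weight $|\lambda_g(n)\lambda_g(aenp^B)|(np^B)^{-1/2}\ll (ae)^\epsilon n^{-1/2+\epsilon}$ by Deligne. This produces a factor $(p^{\nu(1+\epsilon)})^{1/2}$, which against the $p^{-\nu}$ from the $c$-sum leaves $(ae)^\epsilon p^{-\nu/2+\nu\epsilon}$; the condition $\ell<p^\nu$ keeps the $a,e$ powers harmless. The main obstacle is the bookkeeping of the Bessel product in the transition region $c\asymp\sqrt{aenp^B x}$ with $x$ near $0$. I must make sure the $c$-sum saves the full $p^{-\nu}$ (not just $p^{-\nu/2}$), i.e. that the number of $c\le\sqrt{aen p^B}$ divisible by $p^{\nu-\delta}$ is $\ll 1+\sqrt{aenp^B}/p^{\nu-\delta}$. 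I would also check that the "$1+$" term is controlled by the decay $y^{k+k'-2}$ for large $c$.
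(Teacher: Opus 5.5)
Your argument follows the paper's proof, which is only a three-line sketch: the weight forces $x\le 1$, then one splits the $c$- and $n$-ranges and applies Lemma~\ref{V bounds} and Lemma~\ref{lem:Bessel}. Your bookkeeping of that sketch is sound:
\begin{itemize}
\item Since $np^B\ge 1$, the weight is $1-\eta(x)$.
\item With $\alpha=4\pi\sqrt{aenp^B}/c$, the $x$-integral is $\ll\alpha^{k+k'-2}$ for $\alpha\le 1$ and $\ll\alpha^{-1}\log(2+\alpha)$ for $\alpha\ge 1$.
\item The $c$-sum is $\ll p^{-\nu+\delta}\log$. The number of multiples of $p^{\nu-\delta}$ up to $X$ is at most $X/p^{\nu-\delta}$, so the ``$1+$'' you worry about never occurs.
\item The $n$-sum up to $p^{\nu(1+\epsilon)}/(adp^B)$ gives back $p^{\nu/2+O(\nu\epsilon)}$.
\end{itemize}

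The one step that does not work as written is your bound for $V_{g,\omega_1}(x/p^\nu)$ near $0$. The same issue arises for $V_{g,\omega_2}(adnp^B/p^\nu)$ when $n$ is small, which you do not address.

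Both residues in Lemma~\ref{V bounds} are individually of size $1/|\omega|$; the one at $s=\omega$ gets this through $\zeta_{p^\nu}(1+2\omega)$. Neither of your proposed remedies is admissible. The parameter $\eta$ only bounds $|\omega|$ from above, so $1/|\omega|$ cannot be absorbed into it. Keeping $\omega$ away from $0$ is also not allowed, because the paper needs uniformity as $\omega\to 0$ to derive the corollaries.

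The repair is that the two residues cancel to leading order. Since $H_{g,\omega}(0)=1+O(|\omega|)$ and $\zeta_{p^\nu}(1+2\omega)=\frac{1-1/p}{2\omega}+O(1)$, their sum is
\[
\Bigl(1-\frac1p\Bigr)\frac{t^{-\omega}-1}{2\omega}+O\bigl(1+t^{-\Re\omega}\bigr)\ll \log(2/t)\,t^{-|\Re\omega|}\qquad (0<t\le 1),
\]
uniformly in $\omega$. Alternatively, shift the contour in \eqref{eqn:V-g-omega} only to $\Re s=\epsilon$. This crosses no poles and gives $V_{g,\omega}(t)\ll_\epsilon t^{-\epsilon}$.

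With $t=x/p^\nu$ and $|\Re\omega|<1/\log p^\nu$, this becomes $\ll\log(2p^\nu/x)\,x^{-|\Re\omega|}$. That is integrable against your Bessel bound and costs only the $p^{\nu\epsilon}$ already allowed in the claim. With this fix your proof goes through.
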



\begin{proof}
Observe that the integral in the given expression is supported over $[0, 1]$, the only interval where the integrand is possibly nonzero. We split the sums over $c$ and $n$ into suitable ranges so that Lemma \ref{V bounds} and Lemma \ref{lem:Bessel} may be applied uniformly. The claimed bound then follows by applying these estimates on each range.   
\end{proof}


\subsection{Main Term in the Off-diagonal Contribution}\label{sec:main term OD}

By Lemmas \ref{lem:extend c}, \ref{lem:extend MN}, and \ref{lem:remove-dyadic}, we can now write   \begin{equation}\label{eq:M1OD setup}
    \begin{split}
    &M^{OD}_1(B) = M^{OD}_{1, 1}(B)+M^{OD}_{1, 2}(B)
    \\
    = \, & \frac{2\pi i^{-k}}{p^{\nu(\omega_1+\omega_2)}}
    \sum_{de=\ell}
    \frac{1}{\sqrt d}
    \sum_{ab=d}  \frac{\mu(a) \lambda_g(b)}{\sqrt{a}}
    \sum_n  \lambda_g(n)  \lambda_g(aenp^B)
    \Big\{\sum_{p^\nu \mid c} \frac{\phi(c)}{c^2}
    -\frac{1}{p} \sum_{p^{\nu-1} \mid c} \frac{\phi(c)}{c^2}\Big\}
    \\
    &\hspace{2em} \times \int_0^\infty \frac{1}{\sqrt{xnp^B}}
    V_{g, \omega_1}\Big(\frac{x}{p^\nu }\Big)
    V_{g, \omega_2}\Big(\frac{adnp^B}{p^\nu }\Big)
        J_{k-1}\Big(\frac{4\pi \sqrt{aenp^B x}}{c}\Big)
        J_{g}\Big(\frac{4\pi \sqrt{aenp^B x}}{c}\Big)
        \mathop{dx} \\
        &\hspace{3em}+ O\bigg(p^{1-\nu(1-\epsilon)}
    \sum_{\substack{de=\ell\\ab=d}}  \frac{(ae)^{-1+\epsilon}b^{\epsilon}}{\sqrt{ad}}\sum_{M,N\ll p^{\nu(1+\epsilon)}}\frac{C^2}{\sqrt{MN}}\Big(\frac{\sqrt{aeMN}}{C}\Big)^{k+k'}\bigg)\\&\hspace{3em}+O\left(\ell^{\epsilon}p^{-\nu\left(\frac{1}{2}+\epsilon\right)}\right).
    \end{split}
    \end{equation}    
Using the change of variable
$x=\frac{c^2y^2}{(4\pi)^2 a e np^B}$, we get
    \begin{align*}
  & \int_0^\infty \frac{1}{\sqrt{xnp^B}}
   V_{g, \omega_1}\Big(\frac{x}{p^\nu}\Big)
   V_{g, \omega_2}\Big(\frac{adnp^B}{p^\nu}\Big)
    J_{k-1}\Big(\frac{4\pi \sqrt{aenp^Bx}}{c}\Big)
    J_{g}\Big(\frac{4\pi \sqrt{aenp^B x}}{c}\Big) \mathop{dx}
\\&=    \int_0^\infty 
  \frac{c} {2\pi \sqrt{ae} np^B}  
   V_{g, \omega_1}\Big(\frac{c^2y^2}{(4\pi)^2 ae np^B p^\nu }\Big) 
  V_{g, \omega_2}\Big(\frac{adnp^B}{p^\nu }\Big)
    J_{k-1}(y) 
    J_g( y) \mathop{dy} .
    \end{align*} 
Introducing the notation 
    \begin{align}\label{eqn:fancyZ}
    \mathcal{Z}(u)
    &:= 2\pi i^{-k}
    \int_0^\infty J_{k-1}(y) J_g(y)\nonumber \\
    &\times \bigg\{\sum_{p^\nu \mid c} 
    \frac{\phi(c)}{c} V_{g, \omega_1}\left(\frac{c^2y^2}{(4\pi)^2 ae u p^\nu  }\right)
    - \frac{1}{p}\sum_{p^{\nu-1} \mid c}
    \frac{\phi(c)}{c} V_{g, \omega_1}\left(\frac{c^2y^2}{(4\pi)^2 ae u p^\nu }\right)
    \bigg\}
    \mathop{dy}
    \end{align}
in \eqref{eq:M1OD setup}, we can write
    \begin{align}\label{eq:M1OD} 
    M^{OD}_1(B)
   & =   
    \frac{p^{-\nu(\omega_1+\omega_2)}}{2\pi}
    \sum_{de=\ell}
    \frac{1}{\sqrt{\ell}}
    \sum_{ab=d} \frac{\mu(a)  \lambda_g(b)}{a}
    \sum_n  \frac{\lambda_g(n)  \lambda_g(aenp^B)}{np^B  }
    V_{g, \omega_2} \Big(\frac{adnp^B}{p^\nu }\Big)
    \mathcal{Z}(np^B)\nonumber 
    \\&\hspace{2em}+ O\bigg(p^{1-\nu(1-\epsilon)}
    \sum_{\substack{de=\ell\\ab=d}}  \frac{(ae)^{-1+\epsilon}b^{\epsilon}}{\sqrt{ad}}\sum_{M,N\ll p^{\nu(1+\epsilon)}}\frac{C^2}{\sqrt{MN}}\Big(\frac{\sqrt{aeMN}}{C}\Big)^{k+k'}\bigg)\nonumber
    \\&\hspace{2em}+O\Big(\ell^{\epsilon}p^{-\nu\left(\frac{1}{2}+\epsilon\right)}\Big).
    \end{align}
Going back to the expression $\mathcal{Z}(u)$, we substitute \eqref{eqn:V-g-omega} into \eqref{eqn:fancyZ} and observe that 
 \begin{align*}
    \sum_{p^\nu \mid c} 
    \frac{\phi(c)}{c^{1+2s}}
    -\frac{1}{p} \sum_{p^{\nu-1} \mid c}
      \frac{\phi(c)}{c^{1+2s}}
      =\frac{1-\frac1{p}}{p^{2\nu s}} \frac{1-p^{2s-1}}{1-p^{-2s-1}}  \frac{\zeta(2s)}{ \zeta(1+2s) } .
    \end{align*}
It follows that
    \begin{align*}
    &\mathcal{Z}(u)= 
     \frac{2\pi i^{-k} (4\pi^2)^{\omega_1}(1-\frac1{p})}{\xi(\frac12)^5 P_g(\omega_1) \Gamma_g\big(\frac12+\omega_1\big)}
    \int_0^\infty J_{k-1}(y) J_g(y) 
     \\
    &\times \frac{1}{2\pi i}  \int_{(3)} 
    \Gamma_g\Big(\frac12+s\Big) (1-p^{2s-1}) \zeta(2s)
 \xi\Big(\frac12+s-\omega_1\Big)^5 \left(\frac{4aeu}{y^2p^\nu}\right)^s
    P_g(s)
    \frac{\mathop{ds}}{s-\omega_1}
    \mathop{dy}.
    \end{align*}
By \cite[p.~403]{Watson}, we know that
    \begin{align*}
    &\int_0^\infty 
    J_{k-1}(y) J_g(y) y^{-2s} \mathop{dy}
    = i^k \sqrt{\pi} 
    \frac{\Gamma_g(\frac12-s) \Gamma(s)}{\Gamma_g(\frac12+s) \Gamma(\frac12-s)} 
    \end{align*}
 holds for $\Re(k+k'-1)>\Re(2s) >0$. Hence,
    \begin{align*}
    \mathcal{Z}(u)
    &= 
    \frac{2\pi \sqrt{\pi} (4\pi^2)^{\omega_1} \left(1-\frac{1}{p}\right)}{P_g(\omega_1) \Gamma_g(\frac12+\omega_1) \xi(\frac12)^5} 
     \frac{1}{2\pi i} \int_{(3)} 
     \frac{\Gamma_g(\frac12-s) \Gamma(s)}{\Gamma(\frac12-s) }
     (1-p^{2s-1}) \zeta(2s) 
    \\
    &\hspace{3em} \times  \xi\Big(\frac12+s-\omega_1\Big)^5
    \left(\frac{4aeu}{p^\nu}\right)^{s}  P_g(s)  
    \frac{\mathop{ds}}{s-\omega_1} .
    \end{align*}
Then, by using the functional equation of $\zeta(s)$ in the form $ \displaystyle 
\Gamma(s)\zeta(2s)= \pi^{-\frac12+2s} \Gamma\big(1/2-s\big) \zeta(1-2s)$, we can rewrite $\mathcal{Z}(u)$ as 
    \begin{align}\label{eqn:Z(u)}
    \mathcal{Z}(u)
    &=
    \frac{2\pi(4\pi^2)^{\omega_1}  \big(1-\frac{1}{p}\big)}{P_g(\omega_1) \Gamma_g(\frac12+\omega_1) \xi(\frac12)^5} 
     \frac{1}{2\pi i} \int_{(3)} 
    \Gamma_g\Big(\frac12-s\Big)
     (1-p^{2s-1}) \zeta(1-2s)   \nonumber    \\
    &\hspace{3em} \times 
    \xi\Big(\frac12+s-\omega_1\Big)^5
   \left(\frac{4\pi^2aeu}{p^\nu}\right)^{s} 
  P_g(s)  
   \frac{\mathop{ds}}{s-\omega_1} .
    \end{align}
We now move the contour of integration in \eqref{eqn:Z(u)} to $\Re(s) =-3$ encountering residues of the integrand at $s=0$ and $s=\omega_1$. This yields
    \begin{equation}\label{eq:Zu-1}
    \begin{split}
    \mathcal{Z}(u)
    = &\, \frac{\pi}{\omega_1}    \Big(1-\frac{1}{p}\Big)^2
    \frac{(4\pi^2)^{\omega_1}P_g(0) \Gamma_g\big(\frac12\big)  \xi\big(\frac12-\omega_1\big)^5}{P_g(\omega_1) \Gamma_g(\frac12+\omega_1) \xi(\frac12)^5}  
     \\
    & + 2\pi   \Big(1-\frac{1}{p}\Big)
    \frac{(4\pi^2)^{\omega_1}\Gamma_g(\frac12-\omega_1)}{\Gamma_g(\frac12+\omega_1) } 
     \Big(\frac{4\pi^2 ae  u}{p^\nu}\Big)^{\omega_1} 
    \zeta(1-2\omega_1)  (1-p^{-1+2\omega_1})
    \\
    &+2\pi   \Big(1-\frac{1}{p}\Big)
    \frac{(4\pi^2)^{\omega_1}}{P_g(\omega_1) \Gamma_g(\frac12+\omega_1) \xi(\frac12)^5} 
     \frac{1}{2\pi i}  \int_{(-3)} 
    \Gamma_g\Big(\frac12-s\Big)
     (1-p^{2s-1}) \zeta(1-2s)       \\
    &\hspace{3em} \times 
    \xi\Big(\frac12+s-\omega_1\Big)^5
   \left(\frac{4\pi^2aeu}{p^\nu}\right)^{s} 
  P_g(s)  
   \frac{\mathop{ds}}{s-\omega_1}  .
    \end{split}
    \end{equation}
 From the change of variables $s\to -s$, the evenness of $P_g$ and the symmetry of $\xi$, it easily follows that
    \begin{align*}
    &2\pi   \Big(1-\frac{1}{p}\Big)
    \frac{(4\pi^2)^{\omega_1}}{P_g(\omega_1) \Gamma_g(\frac12+\omega_1) \xi(\frac12)^5} 
     \frac{1}{2\pi i}  \int_{(-3)} 
    \Gamma_g\Big(\frac12-s\Big)
     (1-p^{2s-1}) \zeta(1-2s)       \\
    &\hspace{3em} \times 
    \xi\Big(\frac12+s-\omega_1\Big)^5
   \left(\frac{4\pi^2aeu}{p^\nu}\right)^{s} 
  P_g(s)  
   \frac{\mathop{ds}}{s-\omega_1}  .   \\&   
    =-2\pi    \Big(1-\frac1{p}\Big)
    \frac{(4\pi^2)^{2\omega_1} \Gamma_g(\frac12-\omega_1)}{ \Gamma_g(\frac12+\omega_1) } 
    V_{g, -\omega_1}\Big(\frac{aeu}{p^\nu}\Big) ,
    \end{align*} where the last equality follows from applying \eqref{eqn:V-g-omega}. 
Therefore, by \eqref{eq:Zu-1} we have shown that
    \begin{equation}\label{eq:Zu-final}
    \begin{split}
    \mathcal{Z}(u)
    = &\, \frac{\pi}{\omega_1}    \Big(1-\frac{1}{p}\Big)^2
    \frac{(4\pi^2)^{\omega_1}P_g(0) \Gamma_g\big(\frac12\big)  \xi\big(\frac12-\omega_1\big)^5}{P_g(\omega_1) \Gamma_g(\frac12+\omega_1) \xi(\frac12)^5}  
     \\
    & + 2\pi   \Big(1-\frac{1}{p}\Big)
    \frac{(4\pi^2)^{2\omega_1}\Gamma_g(\frac12-\omega_1)}{\Gamma_g(\frac12+\omega_1) } 
     \Big(\frac{ ae  u}{p^\nu}\Big)^{\omega_1} 
    \zeta(1-2\omega_1)  (1-p^{-1+2\omega_1})
    \\
    &-2\pi    \Big(1-\frac1{p}\Big)
    \frac{(4\pi^2)^{2\omega_1} \Gamma_g(\frac12-\omega_1)}{ \Gamma_g(\frac12+\omega_1) } 
    V_{g, -\omega_1}\Big(\frac{aeu}{p^\nu}\Big) .
    \end{split}
    \end{equation}
 Applying \eqref{eq:Zu-final} in \eqref{eq:M1OD}  and plugging the resulting expression into to the identity \[M^{OD}_1=M^{OD}_1(0)-\lambda_f(p)M_{1}^{OD}(1)+M_{1}^{OD}(2)\] gives
    \begin{align}\label{eq:M1OD residues} 
    M^{OD}_1
    &=  
    \frac{p^{-\nu(\omega_1+\omega_2)}}{2\pi}
    \sum_{de=\ell}
    \frac{1}{\sqrt{\ell}}
    \sum_{ab=d} \frac{\mu(a)  \lambda_g(b)}{a}
    \sum_{n: (n,p)=1}  \frac{\lambda_g(n)  \lambda_g(aen)}{n }
    V_{g, \omega_2} \Big(\frac{adn}{p^\nu }\Big)
  \nonumber  \\
    &\hspace{2em}\times \bigg\{
    \frac{\pi}{\omega_1}    \Big(1-\frac{1}{p}\Big)^2
    \frac{(4\pi^2)^{\omega_1}P_g(0) \Gamma_g\big(\frac12\big)  \xi\big(\frac12-\omega_1\big)^5}{P_g(\omega_1) \Gamma_g(\frac12+\omega_1) \xi(\frac12)^5}  
\nonumber    \\
    & \hspace{3em} + 2\pi   \Big(1-\frac{1}{p}\Big)
    \frac{(4\pi^2)^{2\omega_1}\Gamma_g(\frac12-\omega_1)}{\Gamma_g(\frac12+\omega_1) } 
    \Big(\frac{ ae n}{p^\nu}\Big)^{\omega_1} 
    \zeta(1-2\omega_1)  (1-p^{-1+2\omega_1})
  \nonumber  \\
    &\hspace{3em} -2\pi    \Big(1-\frac1{p}\Big)
    \frac{(4\pi^2)^{2\omega_1} \Gamma_g(\frac12-\omega_1)}{ \Gamma_g(\frac12+\omega_1) } 
    V_{g, -\omega_1}\Big(\frac{aen}{p^\nu}\Big) 
    \bigg\}
    \\&\hspace{1em} + O\bigg(p^{1-\nu(1-\epsilon)}
    \sum_{\substack{de=\ell\\ab=d}}  \frac{(ae)^{-1+\epsilon}b^{\epsilon}}{\sqrt{ad}}\sum_{M,N\ll p^{\nu(1+\epsilon)}}\frac{C^2}{\sqrt{MN}}\Big(\frac{\sqrt{aeMN}}{C}\Big)^{k+k'}\bigg)\nonumber
    \\&\hspace{1em}+O\Big(\ell^{\epsilon}p^{-\nu\left(\frac{1}{2}+\epsilon\right)}\Big) . \nonumber
   \end{align}
For convenience, we set
   \begin{align*}
   M^{OD}_1 &= \mathscr{M}^{OD}_{1;1} +\mathscr{M}^{OD}_{1;2}+\mathscr{M}^{OD}_{1;3}\\&\hspace{2em} 
   + O\bigg(p^{1-\nu(1-\epsilon)}
    \sum_{\substack{de=\ell\\ab=d}}  \frac{(ae)^{-1+\epsilon}b^{\epsilon}}{\sqrt{ad}}\sum_{M,N\ll p^{\nu(1+\epsilon)}}\frac{C^2}{\sqrt{MN}}\Big(\frac{\sqrt{aeMN}}{C}\Big)^{k+k'}\bigg)
    \\&\hspace{2em}+O\Big(\ell^{\epsilon}p^{-\nu\left(\frac{1}{2}+\epsilon\right)}\Big). 
 \end{align*}
 Here, 
 \begin{align*}
 \mathscr{M}^{OD}_{1;1}&=  \frac{p^{-\nu(\omega_1+\omega_2)}}{2\pi} \frac{\pi}{\omega_1}    \Big(1-\frac{1}{p}\Big)^2
    \frac{(4\pi^2)^{\omega_1}P_g(0) \Gamma_g\big(\frac12\big)  \xi\big(\frac12-\omega_1\big)^5}{P_g(\omega_1) \Gamma_g(\frac12+\omega_1) \xi(\frac12)^5}   \nonumber  \\
    &\hspace{2em}\times
\sum_{de=\ell}
    \frac{1}{\sqrt{\ell}}
    \sum_{ab=d} \frac{\mu(a)  \lambda_g(b)}{a}
    \sum_{n: (n,p)=1}  \frac{\lambda_g(n)  \lambda_g(aen)}{n }
    V_{g, \omega_2} \Big(\frac{adn}{p^\nu }\Big),
    \\
     \mathscr{M}^{OD}_{1;2}
     &= p^{-\nu(\omega_1+\omega_2)}p^{-\nu\omega_1}  \Big(1-\frac{1}{p}\Big)
    \frac{(4\pi^2)^{2\omega_1}\Gamma_g(\frac12-\omega_1)}{\Gamma_g(\frac12+\omega_1) } 
    \zeta_{p^{\nu}}(1-2\omega_1)  \nonumber\\&\hspace{2em}\times
    \sum_{de=\ell}
    \frac{1}{\sqrt{\ell}}
    \sum_{ab=d} \frac{\mu(a)  \lambda_g(b)(ae)^{\omega_1}}{a}
    \sum_{n: (n,p)=1}  \frac{\lambda_g(n)  \lambda_g(aen)}{n^{1-\omega_1} }
    V_{g, \omega_2} \Big(\frac{adn}{p^\nu }\Big),
 \end{align*}
 and 
 \begin{align*}
  \mathscr{M}^{OD}_{1;3}
  &=p^{-\nu(\omega_1+\omega_2)}   \Big(1-\frac1{p}\Big)
    \frac{(4\pi^2)^{2\omega_1} \Gamma_g(\frac12-\omega_1)}{ \Gamma_g(\frac12+\omega_1) } 
 \\
    &\hspace{2em}\times  \sum_{de=\ell}
    \frac{1}{\sqrt{\ell}}
    \sum_{ab=d} \frac{\mu(a)  \lambda_g(b)}{a}
    \sum_{n: (n,p)=1}  \frac{\lambda_g(n)  \lambda_g(aen)}{n }
    V_{g, \omega_2} \Big(\frac{adn}{p^\nu }\Big)   V_{g, -\omega_1}\Big(\frac{aen}{p^\nu}\Big).
 \end{align*}

A similar decomposition can be obtained for $M_{2}^{OD}$,  $M_{3}^{OD}$ and  $M_{4}^{OD}$. 
In analyzing $\mathscr{M}_{1;1}^{OD}$, we open up $V_{g, \omega_2} \big(\frac{adn}{p^\nu }\big)$ using its definition in \eqref{eqn:V-g-omega}. We move the contour of integration from $\Re(s)=3$ to $\Re(s)=-\frac12$ and collect the residues of the integrand at $s=0$ and $s= \omega_2$ which we denote by $\mathscr{M}_{1;1}^{OD,(0)}$ and $\mathscr{M}_{1;1}^{OD,(\omega_2)}$ respectively.  This  allows us to write $\mathscr{M}_{1;1}^{OD}$ as 
\[\mathscr{M}_{1;1}^{OD,(0)}+\mathscr{M}_{1;1}^{OD,(\omega_2)}+\mathscr{E}_{1;1}^{OD}.\] Here, the term $\mathscr{E}_{1;1}^{OD}$ is the resulting contour integral along $\Re(s)=-\frac12$. The corresponding results for  $\mathscr{M}_{2;1}^{OD}, \mathscr{M}_{3;1}^{OD}$ and $\mathscr{M}_{4;1}^{OD}$, which appear in $M_{2}^{OD}$,  $M_{3}^{OD}$ and  $M_{4}^{OD}$, respectively, can be obtained by observing the symmetry between the expressions $\mathscr{M}_{j;1}^{OD}$. For example, in light of \eqref{eq:Mg setup}, we notice that $\mathscr{M}^{OD}_{2;1}$ can be obtained by applying the swap $\omega_1 \to -\omega_1$ to all terms in $\mathscr{M}_{1;1}^{OD}$ except $p^{-\nu(\omega_1+\omega_2)}$. A straightforward computation shows that 
\[
 \mathscr{M}_{1;1}^{OD,(0)}
     +\ve_{\omega_1}(f\otimes g)\mathscr{M}_{2;1}^{OD,(0)}
  +\ve_{\omega_2}(f\otimes g)\mathscr{M}_{3;1}^{OD,(0)}
      +\ve_{\omega_1}(f\otimes g)\ve_{\omega_2}(f\otimes g) \mathscr{M}_{4;1}^{OD,(0)}=0.
\]
Hence, the contribution of the residues at $s=0$ to the off-diagonal expression $M^{OD}$ given in \eqref{eqn:off-diag-contr} is zero. It follows that the contribution to $M^{OD}$ coming from the $\mathscr{M}_{j;1}^{OD}$ for $j=1,\cdots,4$ is
\begin{equation}\label{eqn:fancyMOD11}
\begin{split}
&\mathscr{M}_{1;1}^{OD}
     +\ve_{\omega_1}(f\otimes g)\mathscr{M}_{2;1}^{OD}
  +\ve_{\omega_2}(f\otimes g)\mathscr{M}_{3;1}^{OD}
      +\ve_{\omega_1}(f\otimes g)\ve_{\omega_2}(f\otimes g) \mathscr{M}_{4;1}^{OD} \\
      &= \mathscr{M}_{1;1}^{OD,(\omega_2)}
     +\ve_{\omega_1}(f\otimes g)\mathscr{M}_{2;1}^{OD,(\omega_2)}
  +\ve_{\omega_2}(f\otimes g)\mathscr{M}_{3;1}^{OD,(-\omega_2)}
  \\
     &\qquad +\ve_{\omega_1}(f\otimes g)\ve_{\omega_2}(f\otimes g) \mathscr{M}_{4;1}^{OD,(-\omega_2)} 
    +O\Big(\max_{j}\left|\mathscr{E}_{j;1}^{OD}\right|\Big).
\end{split}
\end{equation}
Observe that $\mathscr{E}_{j;1}^{OD}=O(\ell^{\epsilon}p^{-\frac{\nu}{2}})$ for each $j$. This can be seen by using the following lemma regarding the series \[\sum_{n, (n, p)=1}  \frac{\lambda_g(n)  \lambda_g(aen)}{n^{z}}\] which we encounter when interchanging summation and integration in $\mathscr{E}_{j;1}^{OD}$.

\begin{lem}
For $\Re(z)>1$, we have 
 \begin{equation*}
    \sum_{n, (n, p)=1}  \frac{\lambda_g(n)  \lambda_g(aen)}{n^{z}}
    =
    \Upsilon(z, p, ae) \frac{L(z, g\otimes g)}{\zeta(2z)},
    \end{equation*}
where \[
    \Upsilon(z, p, ae)
    =  \Big( \sum_{j=0}^\infty
    \frac{\lambda_g(p^j)^2}{p^{zj}}
    \Big)^{-1}
        \prod_{q\mid ae} 
         \Big( \sum_{j=0}^\infty
    \frac{\lambda_g(q^j)^2}{q^{zj}}
    \Big)^{-1}
     \Big( \sum_{j=0}^\infty
    \frac{\lambda_g(q^j) \lambda_g(q^{\nu_{q}(ae)+j})}{q^{zj}}
    \Big)
    \]
is holomorphic and satisfies  $\left|\Upsilon(z, p, ae)\right|\ll_{\epsilon} (ae)^\epsilon$ in every fixed domain  $\Re(z)\geq\epsilon>0$. 
\end{lem}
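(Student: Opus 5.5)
The plan is to factor the Dirichlet series into an Euler product using multiplicativity, and then to compare it prime by prime with the known Euler product of $L(z,g\otimes g)/\zeta(2z)$. Since $g\in H^*_{k'}(1)$, the coefficients $\lambda_g$ are multiplicative. For fixed $ae$, the function $n\mapsto \lambda_g(n)\lambda_g(aen)$ is multiplicative in the twisted sense: writing $n=\prod_q q^{j_q}$, we have $\lambda_g(aen)=\prod_q \lambda_g(q^{\nu_q(ae)+j_q})$. Hence, for $\Re(z)>1$, where absolute convergence follows from Deligne's bound,
\[
\sum_{(n,p)=1}\frac{\lambda_g(n)\lambda_g(aen)}{n^{z}}
=\prod_{q\mid ae}\Big(\sum_{j\ge0}\frac{\lambda_g(q^j)\lambda_g(q^{\nu_q(ae)+j})}{q^{zj}}\Big)
\prod_{q\nmid aep}\Big(\sum_{j\ge0}\frac{\lambda_g(q^j)^2}{q^{zj}}\Big).
\]
Here we use that $p\nmid ae$, because $(\ell,p)=1$ and $ae\mid\ell$.

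Next, recall the definition of the Rankin--Selberg $L$-function from the introduction, with $N=D=1$:
\[
L(z,g\otimes g)=\zeta(2z)\sum_{n}\frac{\lambda_g(n)^2}{n^{z}}.
\]
This gives $\sum_n \lambda_g(n)^2 n^{-z}=L(z,g\otimes g)/\zeta(2z)=\prod_q \sum_j \lambda_g(q^j)^2q^{-zj}$. Multiply and divide the product over $q\nmid aep$ by the missing local factors at $q=p$ and at $q\mid ae$. This produces exactly $\Upsilon(z,p,ae)\,L(z,g\otimes g)/\zeta(2z)$, which proves the identity for $\Re(z)>1$.

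For holomorphy and the bound in $\Re(z)\ge\epsilon$, I would compute each local factor explicitly. By the Hecke relations (Lemma \ref{Hecke relation}), with Satake parameters $\alpha_q,\beta_q$ of modulus $1$ (Deligne), the sum $\sum_j \lambda_g(q^j)^2X^j$ is the rational function $(1+X)/\big((1-\alpha^2X)(1-X)^2(1-\beta^2X)\big)$ with $X=q^{-z}$. Similarly, $\sum_j\lambda_g(q^j)\lambda_g(q^{r+j})X^j$ equals a polynomial of degree at most $1$ in $X$, with coefficients $\ll r+1$, divided by the same denominator. This follows from the identity $\lambda(q^{r+j})=\lambda(q^r)\lambda(q^j)-\lambda(q^{r-1})\lambda(q^{j-1})$, which reduces it to the two generating series $\sum\lambda(q^j)^2X^j$ and $\sum\lambda(q^j)\lambda(q^{j-1})X^j$, and both have explicit rational forms. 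Therefore the ratio at $q\mid ae$ is $\big(P_r(X)\big)/(1+X)$ with $|P_r(X)|\ll (r+1)$. The inverse factor at $p$ is $(1-\alpha^2X)(1-X)^2(1-\beta^2X)/(1+X)$, which is holomorphic and bounded for $\Re z\ge\epsilon$, since $|X|\le p^{-\epsilon}<1$. Each $q\mid ae$ factor is holomorphic except possibly where $1+q^{-z}=0$, which requires $\Re z=0$, so it is excluded. The factor is bounded by $\ll (\nu_q(ae)+1)/(1-q^{-\epsilon})\le C_\epsilon(\nu_q(ae)+1)$.

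The product over $q\mid ae$ is then $\le \prod_{q\mid ae}C_\epsilon(\nu_q(ae)+1)\le C_\epsilon^{\omega(ae)}\tau(ae)\ll_\epsilon (ae)^{\epsilon}$, by the standard divisor-type bounds. The main obstacle is the explicit computation of the twisted local generating series $\sum_j\lambda(q^j)\lambda(q^{r+j})X^j$, with uniform control of its numerator in $r$. I would handle it through the reduction above together with induction on $r$ via the Hecke recursion, which gives a numerator of the form $\lambda(q^r)-\lambda(q^{r-2})X$, up to sign conventions. This is bounded by $2(r+1)$ by Deligne's bound $|\lambda(q^m)|\le m+1$.
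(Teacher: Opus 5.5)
Your proof of the identity for $\Re(z)>1$ is the same as the paper's. You factor the sum over $(n,p)=1$ into local factors using $p\nmid ae$, then compare with the Euler product of $\sum_n\lambda_g(n)^2n^{-z}=L(z,g\otimes g)/\zeta(2z)$.

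The paper's proof stops there. It gives no argument for the holomorphy of $\Upsilon$ or for the bound $\ll_\epsilon (ae)^\epsilon$ in $\Re(z)\ge\epsilon$. Your local computation supplies that missing part, and it is correct. With $X=q^{-z}$ and $r=\nu_q(ae)\ge 1$, the $q$-factor of $\Upsilon$ is $(\lambda_g(q^r)-\lambda_g(q^{r-2})X)/(1+X)$, with $\lambda_g(q^{-1})=0$. The inverted $p$-factor is a bounded polynomial divided by $1+X$. Since $|1+X|\ge 1-2^{-\epsilon}$, the divisor-type bound $\prod_{q\mid ae}C_\epsilon(\nu_q(ae)+1)\ll_\epsilon (ae)^\epsilon$ follows, uniformly in $p$.

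There is one slip in your closed form. The correct local factor is $\sum_j\lambda_g(q^j)^2X^j=\frac{1+X}{(1-\alpha^2X)(1-X)(1-\beta^2X)}$, equivalently $1-X^2$ over your denominator with $(1-X)^2$. This is what $L(z,g\otimes g)/\zeta(2z)$ forces. Your version gives $X$-coefficient $\lambda_g(q)^2+1$ instead of $\lambda_g(q)^2$. As written, your two claims are also inconsistent. Over the denominator with $(1-X)^2$, the twisted numerator would be $(1-X)(\lambda_g(q^r)-\lambda_g(q^{r-2})X)$, which has degree $2$, not $\le 1$. Your reduction via $\lambda_g(q^{r+j})=\lambda_g(q^r)\lambda_g(q^j)-\lambda_g(q^{r-1})\lambda_g(q^{j-1})$ does give exactly $\lambda_g(q^r)-\lambda_g(q^{r-2})X$ over the correct denominator. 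The common denominator cancels in the ratio at $q\mid ae$, and the $p$-factor changes only by the bounded factor $1-X$. So none of your conclusions are affected.
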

\begin{proof}
For $\Re(z)>1$, we write   \begin{align*}
    \sum_{n, (n, p)=1}  \frac{\lambda_g(n)  \lambda_g(aen)}{n^{z}}
    &= \sum_{n}  \frac{\lambda_g(n)  \lambda_g(aen)}{n^{z}}
    \Big( \sum_{j=0}^\infty
    \frac{\lambda_g(p^j) \lambda_g(p^{\nu_p(ae)+j})}{p^{zj}}
    \Big)^{-1} \\&=  \sum_{n}  \frac{\lambda_g(n)  \lambda_g(aen)}{n^{z}}
    \Big( \sum_{j=0}^\infty
    \frac{\lambda_g(p^j)^2 }{p^{zj}}
    \Big)^{-1}   \end{align*}
since  $\nu_p(ae)=0$ provided that $(\ell, p)=1$. We further write    \[
     \sum_{n}  \frac{\lambda_g(n)  \lambda_g(aen)}{n^{z}}
     =\prod_{(q, ae)=1} 
     \Big( \sum_{j=0}^\infty
    \frac{\lambda_g(q^j)^2}{q^{zj}}
    \Big)
    \prod_{q\mid ae} 
     \Big( \sum_{j=0}^\infty
    \frac{\lambda_g(q^j) \lambda_g(q^{\nu_{q}(ae)+j})}{q^{zj}}
    \Big).
    \]
Now the definition of the $L$-function combined with this identity give
    \begin{align*}
    L(z, g\otimes g)
   & = \zeta(2z) \sum_{n=1}^\infty \frac{\lambda_g(n)^2}{n^z}
    \\
    &= \zeta(2z) 
    \prod_{q \mid ae} 
     \Big( \sum_{j=0}^\infty
    \frac{\lambda_g(q^j)^2}{q^{zj}}
    \Big)
    \prod_{(q, ae)=1} 
     \Big( \sum_{j=0}^\infty
    \frac{\lambda_g(q^j)^2}{q^{zj}}
    \Big)
    \\
    &= \zeta(2z) 
    \prod_{q \mid ae} 
     \Big( \sum_{j=0}^\infty
    \frac{\lambda_g(q^j)^2}{q^{zj}}
    \Big)
    \prod_{q\mid ae} 
     \Big( \sum_{j=0}^\infty
    \frac{\lambda_g(q^j) \lambda_g(q^{\nu_{q}(ae)+j})}{q^{zj}}
    \Big)^{-1}
     \sum_{n}  \frac{\lambda_g(n)  \lambda_g(aen)}{n^{z}} .
    \end{align*}
Thus, for $\Re(z) >1$, we have
    \begin{align*}
       & \sum_{n, (n, p)=1}  \frac{\lambda_g(n)  \lambda_g(aen)}{n^{z}}
       \\
        & = 
        \frac{L(z, g\otimes g)}{\zeta(2z) } \Big( \sum_{j=0}^\infty
    \frac{\lambda_g(p^j)^2}{p^{zj}}
    \Big)^{-1}
        \prod_{q\mid ae} 
         \Big( \sum_{j=0}^\infty
    \frac{\lambda_g(q^j)^2}{q^{zj}}
    \Big)^{-1}
     \Big( \sum_{j=0}^\infty
    \frac{\lambda_g(q^j) \lambda_g(q^{\nu_{q}(ae)+j})}{q^{zj}}
    \Big).
    \end{align*}
\end{proof}

Similarly, for each $\mathscr{M}^{OD}_{j;2}$, $1\leq j \leq 4$, we move the contour $\Re(s)=3$ to $\Re(s)=-\frac12$ and collect the residues at $s=0, \pm \omega_2$. The  contribution of the residues at $s=0$ to  $M^{OD}$ is also zero. It follows that the contribution to $M^{OD}$ coming from $\mathscr{M}_{j;2}^{OD}$ for $j=1,\cdots,4$ is
\begin{equation}\label{eqn:fancyMOD12}
\begin{split}
&\mathscr{M}_{1;2}^{OD}
     +\ve_{\omega_1}(f\otimes g)\mathscr{M}_{2;2}^{OD}
  +\ve_{\omega_2}(f\otimes g)\mathscr{M}_{3;2}^{OD}
      +\ve_{\omega_1}(f\otimes g)\ve_{\omega_2}(f\otimes g) \mathscr{M}_{4;2}^{OD}\\
      &= \mathscr{M}_{1;2}^{OD,(\omega_2)}
     +\ve_{\omega_1}(f\otimes g)\mathscr{M}_{2;2}^{OD,(\omega_2)}
  +\ve_{\omega_2}(f\otimes g)\mathscr{M}_{3;2}^{OD,(-\omega_2)}
  \\
      &\qquad+\ve_{\omega_1}(f\otimes g)\ve_{\omega_2}(f\otimes g) \mathscr{M}_{4;2}^{OD,(-\omega_2)}
      +O\big(\ell^{\epsilon}p^{-\frac{\nu}{2}}\big).
\end{split}
\end{equation}
Now, recall that we set 
   \begin{align}\label{eqn:MOD13}
    \mathscr{M}^{OD}_{1;3}
    &=   -  p^{-\nu(\omega_1+\omega_2)}  \Big(1-\frac1{p}\Big)
    \frac{(4\pi^2)^{2\omega_1} \Gamma_g(\frac12-\omega_1)}{ \Gamma_g(\frac12+\omega_1) }\nonumber
    \\
     &\quad \times   \sum_{de=\ell}
        \frac{1}{\sqrt{\ell}}
        \sum_{ab=d} \frac{\mu(a)  \lambda_g(b)}{a}
        \sum_{n, (n,p)=1}  \frac{\lambda_g(n)  \lambda_g(aen)}{n }
     V_{g, -\omega_1}\Big(\frac{aen}{p^\nu}\Big)  V_{g, \omega_2} \Big(\frac{adn}{p^\nu }\Big)  .
    \end{align}
Using symmetry and simplifying the resulting expressions we get
    \begin{align}\label{eqn:MOD23}
    \varepsilon_1(f\otimes g )
    \mathscr{M}^{OD}_{2; 3}
    =&- 
    p^{-\nu(\omega_1+\omega_2)}  \Big(1-\frac1{p}\Big)\nonumber\\&\hspace{2em}\times
     \sum_{de=\ell}
    \frac{1}{\sqrt{\ell}}
    \sum_{ab=d} \frac{\mu(a)  \lambda_g(b)}{a}
    \sum_{n, (n,p)=1}  \frac{\lambda_g(n)  \lambda_g(aen)}{n }
  V_{g, \omega_1}\Big(\frac{aen}{p^\nu}\Big)  V_{g, \omega_2} \Big(\frac{adn}{p^\nu }\Big)    ,
    \end{align}
    \begin{align}\label{eqn:MOD33}
    \varepsilon_2(f\otimes g )
    \mathscr{M}^{OD}_{3;3}
    =&- p^{-\nu(\omega_1+\omega_2)}  \Big(1-\frac1{p}\Big) (4\pi^2)^{2(\omega_1+\omega_2)} \frac{\Gamma_g(1/2-\omega_2)}{ \Gamma_g(1/2+\omega_2)}
    \frac{ \Gamma_g(\frac12-\omega_1)}{ \Gamma_g(\frac12+\omega_1) }\nonumber
    \\
    &\quad \times   \sum_{de=\ell}
    \frac{1}{\sqrt{\ell}}
    \sum_{ab=d} \frac{\mu(a)  \lambda_g(b)}{a}
    \sum_{n, (n,p)=1}  \frac{\lambda_g(n)  \lambda_g(aen)}{n }
   V_{g, -\omega_1}\Big(\frac{aen}{p^\nu}\Big)  V_{g, -\omega_2} \Big(\frac{adn}{p^\nu }\Big)     ,
    \end{align}
and    \begin{align}\label{eqn:MOD43}
      \varepsilon_1(f\otimes g ) \varepsilon_2(f\otimes g )
    \mathscr{M}^{OD}_{4;3}
    &=- p^{-\nu(\omega_1+\omega_2)}  \Big(1-\frac1{p}\Big)\frac{(4\pi^2)^{2\omega_2} \Gamma_g(1/2-\omega_2)}{ \Gamma_g(1/2+\omega_2)}
   \nonumber \\
     \times  & \sum_{de=\ell}
    \frac{1}{\sqrt{\ell}}
    \sum_{ab=d} \frac{\mu(a)  \lambda_g(b)}{a}
    \sum_{n, (n,p)=1}  \frac{\lambda_g(n)  \lambda_g(aen)}{n }
    V_{g, \omega_1}\Big(\frac{aen}{p^\nu}\Big)  V_{g, -\omega_2} \Big(\frac{adn}{p^\nu }\Big) .
    \end{align}
 
 Finally, we make the crucial observation that 
    \begin{align}\label{eqn:MDcancellation}
    M^D_1
    &+ \mathscr{M}^{OD}_{1;3}
    +\varepsilon_1(f\otimes g ) \big(M^D_2+ \mathscr{M}^{OD}_{2;3}\big)
    +\varepsilon_2(f\otimes g ) \big(M^D_3+ \mathscr{M}^{OD}_{3;3}\big)
    \\
    &+\varepsilon_1(f\otimes g ) \varepsilon_2(f\otimes g )\big(M^D_4+ \mathscr{M}^{OD}_{4;3}\big)
    =0\nonumber.
    \end{align}
which follows directly from comparing the expressions in  \eqref{eq:MD1},  \eqref{eq:MD2},  \eqref{eq:MD3},  \eqref{eq:MD4}, \eqref{eqn:MOD13}, \eqref{eqn:MOD23},  \eqref{eqn:MOD33}, and  \eqref{eqn:MOD43}.

We set
    \begin{align*}
     &M^D  + M^{OD}
     = \big(M^D_{1}  + M^{OD}_{1}\big)
       +  \epsilon_{\omega_1}(f\otimes g)  \big(M^D_{2}  + M^{OD}_{2}\big)
       \\
      &  +  \epsilon_{\omega_2}(f\otimes g)  \big(M^D_{3}  + M^{OD}_{3}\big)
         + \epsilon_{\omega_1}(f\otimes g)  \epsilon_{\omega_2}(f\otimes g) 
          \big(M^D_{4}  + M^{OD}_{4}\big).
    \end{align*}
In view of \eqref{eqn:MDcancellation}, we see that once we add the total diagonal contribution $M^D$ to the off-diagonal contribution $M^{OD}$, only the contributions of the terms $\mathscr{M}^{OD}_{j;1}, \mathscr{M}^{OD}_{j;2}$ for $1\leq j\leq 4$ survive. These are given in \eqref{eqn:fancyMOD11} and \eqref{eqn:fancyMOD12}. By simplifying the remaining eight  terms and handling cancellations, we arrive at the following proposition.


    \begin{prop}\label{prop:MD plus MOD}
We have
    \begin{align*}
    M^D  + M^{OD}
   &=\, p^{-\nu(\omega_1+\omega_2)} 
   \Big( \mathfrak{m}(\omega_1, \omega_2 ) 
   + \epsilon_{\omega_1}(f\otimes g) \mathfrak{m}(-\omega_1, \omega_2 ) 
   +\epsilon_{\omega_2}(f\otimes g) \mathfrak{m}(\omega_1, -\omega_2 ) 
   \\
   &\qquad \qquad \qquad+\epsilon_{\omega_1}(f\otimes g)\epsilon_{\omega_2}(f\otimes g)
    \mathfrak{m}(-\omega_1, -\omega_2 ) 
   \Big)\\
   &\hspace{2em}+O\bigg(p^{1-\nu(1-\epsilon)}
    \sum_{\substack{de=\ell\\ab=d}}  \frac{(ae)^{-1+\epsilon}b^{\epsilon}}{\sqrt{ad}}\sum_{M,N\ll p^{\nu(1+\epsilon)}}\frac{C^2}{\sqrt{MN}}\Big(\frac{\sqrt{aeMN}}{C}\Big)^{k+k'}\bigg)\nonumber
    \\&\hspace{2em}+O\left(\ell^{\epsilon}p^{-\frac{\nu}{2}}\right), 
    \end{align*}
where
       \begin{align*}
    &\mathfrak{m}(\omega_1, \omega_2 )
    \\
    =&  \left(1-\frac{1}{p}\right)p^{-\nu\omega_1+\nu\omega_2}\frac{(4\pi^2)^{2\omega_1} }{2\omega_1}  \frac{\Gamma_g(1/2-\omega_1) }{\Gamma_g(1/2+\omega_1)}\frac{L(1-\omega_1+\omega_2, g\otimes g)}{\zeta(2-2\omega_1+2\omega_2)}
    \zeta_{p^\nu}(1-2\omega_1)\zeta_{p^\nu}(1+2\omega_2)\\&\hspace{2em}\times \sum_{de=\ell} 
        \frac{ e^{\omega_1} d^{-\omega_2}}{\sqrt{\ell}}
      \sum_{ab=d} \frac{\mu(a)  \lambda_g(b)}{a^{1-\omega_1+\omega_2} } 
      \Upsilon(1-\omega_1+\omega_2, p, ae) .
    \end{align*}
\end{prop}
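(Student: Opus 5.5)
The plan is to assemble the surviving pieces already isolated above. By \eqref{eqn:MDcancellation}, once $M^D$ is added to $M^{OD}$, every diagonal term $M^D_j$ cancels against the corresponding $\mathscr{M}^{OD}_{j;3}$. What remains is the eight residue terms in \eqref{eqn:fancyMOD11} and \eqref{eqn:fancyMOD12}, plus the error terms carried along: the Lemma \ref{lem:extend c} error, the $O(\ell^\epsilon p^{-\nu/2})$ terms from Lemma \ref{lem:remove-dyadic}, and the shifted-contour integrals $\mathscr{E}^{OD}_{j;i}$. The error terms appear in the statement exactly as accumulated, so the work is to evaluate the residues explicitly and show they combine into the four $\mathfrak{m}$ terms.

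\textbf{Step 1: the residue in $\mathscr{M}^{OD}_{1;1}$.} I would compute $\mathscr{M}^{OD,(\omega_2)}_{1;1}$ directly. Open $V_{g,\omega_2}(adn/p^\nu)$ via \eqref{eqn:V-g-omega}. The $n$-sum becomes $\sum_{(n,p)=1}\lambda_g(n)\lambda_g(aen)n^{-1-s}$, which by the preceding lemma equals $\Upsilon(1+s,p,ae)L(1+s,g\otimes g)/\zeta(2+2s)$. Taking the residue at $s=\omega_2$ uses $H_{g,\omega_2}(\omega_2)=1$ and produces the factor $(ad/p^\nu)^{-\omega_2}\zeta_{p^\nu}(1+2\omega_2)$ together with the $L/\zeta$ ratio at $1+\omega_2$.

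\textbf{Step 2: the residue in $\mathscr{M}^{OD}_{1;2}$.} The same computation applies, except the $n$-sum carries $n^{-1+\omega_1-s}$. The residue at $s=\omega_2$ therefore gives $L(1-\omega_1+\omega_2,g\otimes g)/\zeta(2-2\omega_1+2\omega_2)$ and $\Upsilon(1-\omega_1+\omega_2,p,ae)$. It also produces the prefactor
\[(1-\tfrac1p)(4\pi^2)^{2\omega_1}\frac{\Gamma_g(\frac12-\omega_1)}{\Gamma_g(\frac12+\omega_1)}\zeta_{p^\nu}(1-2\omega_1)p^{-\nu\omega_1+\nu\omega_2}\zeta_{p^\nu}(1+2\omega_2).\]
Finally, the arithmetic factor $(ae)^{\omega_1}(ad)^{-\omega_2}a^{-1}$ regroups as $e^{\omega_1}d^{-\omega_2}a^{-(1-\omega_1+\omega_2)}$. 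This is exactly the structure of $\mathfrak{m}(\omega_1,\omega_2)$, apart from the factor $1/(2\omega_1)$.

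\textbf{Step 3: recombining.} The $1/(2\omega_1)$ must come from combining the $\mathscr{M}_{j;1}$ residues, which carry $\pi/\omega_1$ and the $\xi(\frac12-\omega_1)^5$ term, with the $\mathscr{M}_{j;2}$ residues across the four sign patterns. I would apply the maps $\omega_1\mapsto-\omega_1$ and $\omega_2\mapsto-\omega_2$, weighted by the root-number factors $\varepsilon_{\omega_i}$, to Steps 1 and 2. Then I would group the eight terms by their $(\pm\omega_1,\pm\omega_2)$ dependence in the $L(1\mp\omega_1\pm\omega_2)$ and $\Upsilon$ arguments.

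The \emph{main obstacle} is verifying that the $\xi^5/\Gamma_g(\frac12)$ terms from $\mathscr{M}_{j;1}$ cancel in pairs, or fold into the $\mathscr{M}_{j;2}$ terms. This should use $\varepsilon_{\omega}\varepsilon_{-\omega}=1$, the evenness of $P_g$, and $\xi(s)=\xi(1-s)$, in the same way the $s=0$ residues were shown to cancel. I would first check this cancellation at the pair $(\omega_1,\omega_2)\leftrightarrow(-\omega_1,\omega_2)$, which involves $\mathscr{M}_{1;1}$ and $\varepsilon_{\omega_1}\mathscr{M}_{2;1}$, since the remaining pairs follow by the same symmetry. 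The normalisation $1/(2\omega_1)$ in $\mathfrak{m}$ is the thing to confirm carefully there.

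\textbf{Step 4: the remaining integrals.} The $\mathscr{E}$ integrals along $\Re s=-\frac12$ are bounded by the $\Upsilon\ll(ae)^\epsilon$ estimate together with convexity for $L(\frac12+it,g\otimes g)$. Their factor $(adn/p^\nu)^{1/2}$ then gives $O(\ell^\epsilon p^{-\nu/2})$.
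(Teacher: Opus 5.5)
Your Steps 1, 2 and 4 follow the paper's route: the diagonal cancels against $\mathscr{M}^{OD}_{j;3}$, the $s=0$ residues cancel, and the residues at $s=\pm\omega_2$ are evaluated via the $\Upsilon$-lemma. Your Step 2 residue is computed correctly.

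The genuine problem is Step 3. The factor $\frac{1}{2\omega_1}$ cannot arise from combining the $\mathscr{M}^{OD}_{j;1}$ residues with the $\mathscr{M}^{OD}_{j;2}$ ones, because the $\mathscr{M}^{OD}_{j;1}$ terms cancel exactly in pairs. In fact they cancel identically, before any contour shift. Apart from the retained $p^{-\nu(\omega_1+\omega_2)}$, the only $\omega_1$-dependence of $\mathscr{M}^{OD}_{1;1}$ is through
\[
\Phi(\omega_1)=\frac{\pi}{\omega_1}\,\frac{(4\pi^2)^{\omega_1}\,\xi\big(\tfrac12-\omega_1\big)^5}{P_g(\omega_1)\,\Gamma_g\big(\tfrac12+\omega_1\big)} .
\]
By $\xi(\tfrac12+\omega_1)=\xi(\tfrac12-\omega_1)$ and the evenness of $P_g$, one has $\varepsilon_{\omega_1}(f\otimes g)\,\Phi(-\omega_1)=-\Phi(\omega_1)$. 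Hence $\mathscr{M}^{OD}_{1;1}+\varepsilon_{\omega_1}(f\otimes g)\mathscr{M}^{OD}_{2;1}=0$, and likewise for the pair $j=3,4$. This is the ``handling cancellations'' in the paper. It also spares you from bounding $\mathscr{E}^{OD}_{j;1}$, whose individual bounds carry a factor $|\omega_1|^{-1}$.

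These terms could not ``fold into'' the others either. Their residues involve $L(1\pm\omega_2,g\otimes g)$ and $\Upsilon(1\pm\omega_2,p,ae)$, not the arguments $1\mp\omega_1\pm\omega_2$. Moreover, adding terms cannot create a multiplicative factor on another term.

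Consequently, what survives is exactly $p^{-\nu(\omega_1+\omega_2)}$ times your Step 2 expression, together with its images under $\omega_1\mapsto-\omega_1$ and $\omega_2\mapsto-\omega_2$, weighted by $\varepsilon_{\omega_1}$ and $\varepsilon_{\omega_2}$. That is the displayed formula with the factor $\frac{1}{2\omega_1}$ deleted from $\mathfrak{m}$. The factor in the statement is a misprint; it looks like a remnant of the prefactor $\frac{1}{2\pi}\cdot\frac{\pi}{\omega_1}$ of the cancelled $\mathscr{M}^{OD}_{j;1}$ terms.

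The paper itself confirms this. In the proof of Corollary \ref{cor:thm 2nd moment}, it writes $M^D+M^{OD}$ for $\ell=1$ as the four $\mathscr{M}^{OD}_{j;2}$ residues with no such factor. The only poles are then the simple ones of $\zeta_{p^\nu}(1\mp2\omega_1)$, $\zeta_{p^\nu}(1\pm2\omega_2)$ and $L(1\mp\omega_1\pm\omega_2,g\otimes g)$, which is what makes the limit a cubic in $\nu\log p$.

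So the check you planned ``to confirm the normalisation'' would end in a contradiction, not a confirmation. The correct conclusion of your argument is Steps 1--2 plus the exact pairwise cancellation, which gives the proposition with $\mathfrak{m}$ corrected.
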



\subsection{Bounding the Off-off-diagonal Contribution} \label{sec:ood}

We now estimate $M^{OOD}$ given by \eqref{eqn:off-off-diag-contr}. This estimation is crucial for determining  the error term in Theorem \ref{thm:2nd moment}. Recall that for each $j=1,2,3,4$, we set
    \begin{equation}\label{eqn:MOODj1}
     M^{OOD}_{j, 1}
     =  M^{OOD}_{1, 1}(0)
     -\lambda_g(p)  M^{OOD}_{j, 1}(1)
     + M^{OOD}_{j, 1}(2)
    \end{equation}
and
    \begin{equation}\label{eqn:MOODj2}
     M^{OOD}_{j, 2}
     =  M^{OOD}_{j, 2}(0)
     -\lambda_g(p)  M^{OOD}_{j, 2}(1)
     + M^{OOD}_{j, 2}(2),
    \end{equation}
    so that $M^{OOD}_j=M^{OOD}_{j, 1}+M^{OOD}_{j, 2}$.
    
We will work only on bounding $M^{OOD}_{1, 1}(B) $ for     $B=0, 1, 2$,  All the other terms can be bounded similarly. We have by \eqref{eq:M 11 OOD} 
     \begin{equation}\label{eq:M11OOD}
     \begin{split}
    M^{OOD}_{1, 1}(B) 
    & = 
     2\pi i^{-k} p^{-\nu(\omega_1+\omega_2)}
    \sum_{de=\ell} \frac{1}{\sqrt d} \sum_{ab=d} 
   \frac{ \mu(a) \lambda_g(b)}{\sqrt a}
    \sum_{M, N \ll p^{\nu(1+\epsilon)}}    \sum_{\substack{c\leq C,\\ p^\nu \mid c}} \frac{1}{c^2}
    \sum_{h\neq 0} S(0, h; c) T_{h}(c, B) .
    \end{split}
   \end{equation}
Recall that 
    \begin{align*}
    T_{h}(c, B)
    =  \sum_{n\geq 1}  
    \lambda_g(n) \lambda_g(aenp^B - h)
    G_{M,N}\left(aenp^B-h, np^B\right) ,
    \end{align*}
where
    \[
    G_{M,N}(z, y) =
    \int_0^\infty F_{M, N}(x, y) 
    J_{k-1}\Big(\frac{4\pi\sqrt{aexy}}{c}\Big)
    J_g\bigg(\frac{4\pi\sqrt{zx}}{c} \bigg) 
    \mathop{dx}.
    \]
Note that the expression for $T_{h}(c, B)$ is in the form \eqref{eq:DFI smoothed sum}.  We will thus estimate it by applying Theorem \ref{DFI type result} similarly to the discussion in~\cite[Section 7.2.1]{KMV RS}. To this end, we need to check that $G$ satisfies  condition  \eqref{eq:DFI H condition}. We choose
    \[
    H(z, y)= G_{M,N}\Big(z, \frac{y}{ae}\Big), 
    \quad
    \text{so that}
    \quad
    T_h(c, B)
    = \sum_{m-aenp^B  = -h} \lambda_g(m) \lambda_g(n) H(m, aenp^B) .
    \]
We also set the parameters
    \[
    P=1+\frac{\sqrt{aep^B MN}}{c}, 
    \quad 
    Z= \frac{c^2P^2}{M} ,
    \quad
    Y= aep^BN ,
    \]
where $Z>Y$. Then by~\cite[p. 153]{KMV RS} the function
    \[
    \frac{cH(z, y)}{\sqrt{ae}MP^{-\frac32}}
    \]
satisfies \eqref{eq:DFI H condition}. Then by Theorem \ref{DFI type result}, we have
    \begin{equation}\label{eq:Th bound}
    \begin{split}
    T_h(c, B)
    &\ll_{g, \epsilon}  \frac{\sqrt{ae}MP^{-\frac32}}{c} P^{\frac54} (Z+Y)^{\frac14} (YZ)^{\frac14+\epsilon}
    \ll_{g, \epsilon}  \frac{\sqrt{ae}M}{c} P^{-\frac14} Z^{\frac12+\epsilon} Y^{\frac14+\epsilon}
    \\
    &\ll_{g, \epsilon} (ae)^{3/4} c^{2\epsilon} P^{\frac34+2\epsilon} M^{\frac12-\epsilon}
    N^{\frac14+\epsilon} .
    \end{split}
    \end{equation}
    Next, we note the following elementary bound     \[
    S(0, h; c) = \frac{\mu(c) \phi(c)}{\phi\big(\frac{c}{(h, c)}\big)}
    \ll (h, c) .
    \]
Then by \eqref{eq:Th bound}, we have
    \begin{align}\label{eq:bound h leq Z}
    \sum_{|h|\ll Z^{1+\epsilon}} S(0, h ; c) T_h(c, B)
    &\ll
      (ae)^{\frac34} P^{\frac34+2\epsilon} M^{\frac12-\epsilon}
    N^{\frac14+\epsilon} c^{2\epsilon} 
    \sum_{h\ll Z^{1+\epsilon}} (h, c) \nonumber\\&\ll  (ae)^{\frac34}c^{3\epsilon} 
 P^{\frac34+2\epsilon} M^{\frac12-\epsilon}
    N^{\frac14+\epsilon}Z^{1+\epsilon}  .   \end{align}

For $|h| \gg Z^{1+\epsilon}$, we can assume without loss of generality that $m \gg Z^{1+\epsilon}$. Since
    \[
    H(m, aenp^B)
    \ll \frac{\sqrt{ae}MP^{-3/2}}{c}
    \Big(1+\frac{m}{Z}\Big)^{-A},
    \]
    $S(0, h; c) \leq \phi(c)$, and $\lambda_g(n) \ll_{\epsilon} n^{\epsilon}$, we have 
    \begin{equation}\label{eq:bound h geq Z} 
    \begin{split}
    \sum_{|h| \gg Z^{1+\epsilon}}  S(0, h; c) T_{h}(c, B)
      &\ll \frac{\phi(c)}{c}\sqrt{ae}MP^{-\frac32}
    \sum_{n\in [N/2, 2N]} 
    \sum_{\substack{m \gg Z^{1+\epsilon}, \\ m \in[M/2, 2M]}} m^{\epsilon}  
    n^{\epsilon} 
    \Big(\frac{m}{Z}\Big)^{-A} 
    \\ 
   & \ll  \sqrt{ae}MP^{-\frac32} N^{1+\epsilon}Z^{-100},
    \end{split}
    \end{equation}
    where the last bound is achieved by choosing $A$ large enough.

Since the bound in \eqref{eq:bound h geq Z} is negligible with respect to that obtained in \eqref{eq:bound h leq Z}, we get the estimate
    \begin{align*}
  \frac{1}{c^2}  \sum_{h\neq 0} S(0,h;c) T_h(c, B)
    &
    \ll (ae)^{\frac34} P^{\frac{11}4+4\epsilon} M^{-\frac12-2\epsilon}
    N^{\frac14+\epsilon} c^{5\epsilon} \\&
\ll  (ae)^{\frac34}  M^{-\frac12-2\epsilon}
    N^{\frac14+\epsilon} c^{5\epsilon}
 +  p^{B(\frac{11}{4}+2\epsilon)}(ae)^{\frac{17}{8}+\epsilon}  M^{\frac{7}{8}}
    N^{\frac{13}{8}+3\epsilon} c^{-\frac{11}{4}+\epsilon}  .
\end{align*}
Summing the above expression over $c$ gives
\[
\sum_{\substack{p^\nu \mid c, \\ c\leq C}}  \frac{1}{c^2} 
 \sum_{h\neq 0} S(0,h;c) T_h(c, B) 
 \ll 
 \frac{C^{1+5\epsilon}}{p^\nu} (ae)^{\frac34}  M^{-\frac12-2\epsilon}
    N^{\frac14+\epsilon} 
    +  (ae)^{\frac{17}{8}+\epsilon}  M^{\frac{7}{8}}
    N^{\frac{13}{8}+3\epsilon} p^{\nu(-\frac{11}{4}+\epsilon)}.
\]  
Comparing the first term on the right-hand side of the above inequality with the bound in Lemma \ref{restrict C} we see that the optimal choice for $C$ is 
\begin{equation}\label{eqn:choiceC}
C\asymp p^{\frac{\nu}{k-2\theta+4\epsilon}} (ae)^{\frac{2k-5}{4k-8\theta+16\epsilon}}M^{\frac{1}{2}} N^{\frac{2k-3-4\theta-4\epsilon}{4k-8\theta+16\epsilon}} .
\end{equation}
Assuming $\ell\ll p^{\nu\left(\frac{1}{5-4\theta}-O(\epsilon)\right)}$ guarantees that $C>\sqrt{aeMN}$. Hence,
\[
M_{1,1}^{OOD}(B) \ll\sum_{M, N \ll p^{\nu(1+\epsilon)}}   
\sum_{\substack{p^\nu \mid c, \\ c\leq C}}  \frac{1}{c^2} 
 \sum_{h\neq 0} S(0,h;c) T_h(c, B) 
 \ll 
\ell^{17/8}p^{-\nu\left(\frac14-O(\epsilon)\right)}
+
\ell^{\frac{5k-5-6\theta}{4k-8\theta}}
p^{-\nu\left(\frac{k-1-2\theta}{4k-8\theta}-O(\epsilon)\right)}.\]
The same bound holds for $M^{OOD}_{j,1}(B)$ and $M^{OOD}_{j,2}(B)$ for $j=2,3,4$. Hence, 
\begin{equation}\label{eqn:bound-MOOD}
M^{OOD}\ll \ell^{17/8}p^{-\nu\left(\frac14-O(\epsilon)\right)}
+
\ell^{\frac{5k-5-6\theta}{4k-8\theta}}
p^{-\nu\left(\frac{k-1-2\theta}{4k-8\theta}-O(\epsilon)\right)}.
\end{equation}

\section{Proof of Theorem \ref{thm:2nd moment}}\label{sec:proof-main}

Recall that 
\[
M_g(p^\nu, \ell, \omega;2)
= M^D+M^{OD}+M^{OOD}
+O_{\epsilon, v}\bigg( p^{\epsilon}{ \sum_{\substack{ab=d,  \\ de=\ell}} \frac{(ae)^{\theta}b^{\epsilon}}{\sqrt{ad}}} 
\sum_{M, N \ll p^{\nu(1+\epsilon)}}C^{\epsilon}\Big(\frac{\sqrt{aeMN}}{C}\Big)^{k-1-2\theta} \bigg).
\]
With the choice of $C$ given in \eqref{eqn:choiceC}, we get
\begin{equation}\label{eqn:M-total-error}
M_g(p^\nu, \ell, \omega;2)
= M^D+M^{OD}+M^{OOD}
+O\Big(\ell^{\left(\frac54-\theta\right)\frac{k+k'-2}{k-2\theta}}
p^{-\nu\left(\frac{k+k'-2}{4(k-2\theta)}-O(\epsilon)\right)}\Big).
\end{equation}
Applying Proposition \ref{prop:MD plus MOD} and plugging the off-off-diagonal bound \eqref{eqn:bound-MOOD} into  \eqref{eqn:M-total-error} end the proof.


\section{Proof of Corollary \ref{cor:thm 2nd moment}}\label{sec:cor}

We now derive Corollary \ref{cor:thm 2nd moment} from Theorem \ref{thm:2nd moment}.  We prove the corollary only in the case $\ell=1$, since the general case is entirely analogous but notationally much heavier. The essential cancellation phenomenon already appears in the untwisted case.

By Proposition \ref{prop:MD plus MOD} and its proof in Section \ref{sec:main term OD}, we have
    \begin{align*}
     &M_g^{D}(p^\nu, 1,\omega;2) +M_g^{OD}(p^\nu, 1, \omega;2)
     \\
    &=\, \mathscr{M}^{OD}_{1;2}
    +   \varepsilon_1(f\otimes g ) \mathscr{M}^{OD}_{2;2}
    +   \varepsilon_2(f\otimes g ) \mathscr{M}^{OD}_{3;2}
    +   \varepsilon_1(f\otimes g )   \varepsilon_2(f\otimes g ) 
    \mathscr{M}^{OD}_{4;2}
    \\
    &=  \Big(1-\frac{1}{p}\Big)
    \frac{(4\pi^2)^{2\omega_1}\Gamma_g(\frac12-\omega_1)}{\Gamma_g(\frac12+\omega_1) }
    p^{-\nu(\omega_1+\omega_2)}  p^{-\nu\omega_1+\nu\omega_2}  
    \\
    &\qquad \times 
    \zeta_{p^\nu}(1-2\omega_1)    
    \zeta_{p^\nu}(1+2\omega_2)
      \Upsilon(1-\omega_1+\omega_2, p, 1) \frac{L(1-\omega_1+\omega_2, g\otimes g)}{\zeta(2-2\omega_1+2\omega_2)}
    \\
    &+\,\,     \Big(1-\frac{1}{p}\Big)
    p^{-\nu(\omega_1+\omega_2)}  p^{\nu\omega_1+\nu\omega_2}  
    \\
    &\qquad \times 
    \zeta_{p^\nu}(1+2\omega_1)
    \zeta_{p^\nu}(1+2\omega_2)
            \Upsilon(1+\omega_1+\omega_2, p, 1) \frac{L(1+\omega_1+\omega_2, g\otimes g)}{\zeta(2+2\omega_1+2\omega_2)}
    \\   
    &+\,\,   \Big(1-\frac{1}{p}\Big)   \frac{(4\pi^2)^{2\omega_2}\Gamma_g(\frac12-\omega_2)}{\Gamma_g(\frac12+\omega_2) }    
    \frac{(4\pi^2)^{2\omega_1}\Gamma_g(\frac12-\omega_1)}{\Gamma_g(\frac12+\omega_1)}
    p^{-\nu(\omega_1+\omega_2)}  p^{-\nu\omega_1-\nu\omega_2} 
    \\ 
    &\qquad \times 
    \zeta_{p^\nu}(1-2\omega_1)
    \zeta_{p^\nu}(1-2\omega_2) 
            \Upsilon(1-\omega_1-\omega_2, p, 1) \frac{L(1-\omega_1-\omega_2, g\otimes g)}{\zeta(2-2\omega_1-2\omega_2)}
    \\
    &+\,\,  \Big(1-\frac{1}{p}\Big)  \frac{(4\pi^2)^{2\omega_2}\Gamma_g(\frac12-\omega_2)}{\Gamma_g(\frac12+\omega_2) }   
    p^{-\nu(\omega_1+\omega_2)}  p^{\nu\omega_1-\nu\omega_2} 
    \\ 
    &\qquad \times 
    \zeta_{p^\nu}(1+2\omega_1)
    \zeta_{p^\nu}(1-2\omega_2)
      \Upsilon(1+\omega_1-\omega_2, p, 1) \frac{L(1+\omega_1-\omega_2, g\otimes g)}{\zeta(2+2\omega_1-2\omega_2)} .
    \end{align*}
Next, we combine the terms according to their poles at $\frac{1}{\omega_1-\omega_2}$ or $\frac{1}{\omega_1+\omega_2}$. We define
    \begin{align*}
    \mathscr{F}_1(\omega_1, \omega_2)
    &= \mathscr{M}^{OD}_{1;2}
     +   \varepsilon_1(f\otimes g )   \varepsilon_2(f\otimes g ) 
    \mathscr{M}^{OD}_{4;2}
    \\
    \mathscr{F}_2(\omega_1, \omega_2) 
    &=  \varepsilon_1(f\otimes g ) \mathscr{M}^{OD}_{2;2}
    +   \varepsilon_2(f\otimes g ) \mathscr{M}^{OD}_{3;2} ,
    \end{align*}
so that     
    \[
    M^D+M^{OD}
    =\mathscr{F}_1(\omega_1, \omega_2)+\mathscr{F}_2(\omega_1, \omega_2) .
    \]
Firstly, we consider $\mathscr{F}_1$. Set 
    \begin{align*}
    f_1(\omega):=\Big(\frac{4\pi^2}{p^{\nu}}\Big)^{2\omega},
    \qquad & f_2(\omega):=\frac{\Gamma_g\left(\frac12-\omega\right)}{\Gamma_g\left(\frac12+\omega\right)},
    \\
    f_{3}(\omega):=\omega \, \zeta_{p^\nu}\left(1+\omega\right),
    \qquad  &
     f_4(\omega):=\omega L(1+\omega, g\otimes g) ,
    \\
    f_5(\omega):=\Upsilon(1+\omega, p,1), 
    \; \text{and}& \quad f_6(\omega):=\frac{1}{\zeta(2+2\omega)}.
    \end{align*}
Using this notation, we can write
    \begin{align*}
    \left(1-\frac{1}{p}\right)^{-1}
    \mathscr{F}_1(\omega_1,\omega_2)
    &=\frac{\mathcal{G}_1(\omega_1)\mathcal{G}_2(\omega_2)}{-4\omega_1\omega_2(\omega_2-\omega_1)}\mathcal{H}(\omega_2-\omega_1)+\frac{\mathcal{G}_1(\omega_2)\mathcal{G}_2(\omega_1)}{-4\omega_1\omega_2(\omega_1-\omega_2)}\mathcal{H}(\omega_1-\omega_2),
    \end{align*}
with 
    \[
    \mathcal{G}_1(\omega)=f_1(\omega)f_2(\omega)f_3(-2\omega), 
    \quad \mathcal{G}_2(\omega)=f_3(2\omega) \quad
    \text{and}\quad \mathcal{H}(\omega)=f_4(\omega)f_5(\omega)f_6(\omega).
    \]
We expand the entire function $\mathcal{H}(\omega)$ into a Taylor series near 0 as follows.
    \[
    \mathcal{H}(\omega)=h_0+h_{1}\omega+\sum_{j\geq2}h_j\omega^{j}.
    \]
Then by direct substitution, we find that
    \begin{equation}\label{eq:A1 before lemma}
    \begin{split}
    &\left(1-\frac{1}{p}\right)^{-1}\mathscr{F}_1(\omega_1,\omega_2)
    \\
    &=\frac{\mathcal{G}_1(\omega_1)\mathcal{G}_2(\omega_2)}{4\omega_1\omega_2(\omega_1-\omega_2)}
    \bigg(h_0+h_1(\omega_2-\omega_1)+\sum_{j\geq2}h_j(\omega_2-\omega_1)^j\bigg)
    \\& \hspace{1cm} -\frac{\mathcal{G}_1(\omega_2)\mathcal{G}_2(\omega_1)}{4\omega_1\omega_2(\omega_1-\omega_2)} \bigg(h_0+h_1(\omega_1-\omega_2)+\sum_{j\geq2}h_j(\omega_1-\omega_2)^j\bigg)\\
    &=\frac{h_0}{4\omega_1\omega_2}\, \frac{\mathcal{G}_1(\omega_1)\mathcal{G}_2(\omega_2)-\mathcal{G}_1(\omega_2)
    \mathcal{G}_2(\omega_1)}{\omega_1-\omega_2}
    -\frac{h_1}{4\omega_1\omega_2}\big(\mathcal{G}_1(\omega_1)\mathcal{G}_2(\omega_2)+\mathcal{G}_1(\omega_2)\mathcal{G}_2(\omega_1)\big)\\
    &\quad -\frac{\mathcal{G}_1(\omega_1)\mathcal{G}_2(\omega_2)}{4\omega_1\omega_2}\sum_{j\geq2}h_j(\omega_2-\omega_1)^{j-1}-\frac{\mathcal{G}_1(\omega_2)\mathcal{G}_2(\omega_1)}{4\omega_1\omega_2}\sum_{j\geq2}h_j(\omega_1-\omega_2)^{j-1} .
    \end{split}
    \end{equation}
At this stage, we can apply the following lemma, which is rather straightforward to prove. 


\begin{lem}
Let $F_1$ and $F_2$ be entire functions. Then 
        \[
        \lim_{z_2\to z_1} \frac{F_1(z_1)F_2(z_2)-F_1(z_2) F_2(z_1)}{z_1-z_2}
         =   F_1'(z_1) F_2(z_1) -F_1(z_1) F_2'(z_1).
         \]
 \end{lem}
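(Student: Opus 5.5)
The plan is to add and subtract a mixed term so that the difference quotient splits into two pieces, each of which is the difference quotient of a single entire function. Concretely, for $z_2\neq z_1$ we write
\[
F_1(z_1)F_2(z_2)-F_1(z_2)F_2(z_1)
= F_1(z_1)\bigl(F_2(z_2)-F_2(z_1)\bigr) - F_2(z_1)\bigl(F_1(z_2)-F_1(z_1)\bigr).
\]
Dividing by $z_1-z_2 = -(z_2-z_1)$ turns the quotient into
\[
-F_1(z_1)\,\frac{F_2(z_2)-F_2(z_1)}{z_2-z_1} + F_2(z_1)\,\frac{F_1(z_2)-F_1(z_1)}{z_2-z_1}.
\]

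In this expression $z_1$ is fixed, so $F_1(z_1)$ and $F_2(z_1)$ are constants. Since $F_1$ and $F_2$ are entire, they are complex differentiable at $z_1$. Hence, as $z_2\to z_1$ through complex values, the two difference quotients converge to $F_2'(z_1)$ and $F_1'(z_1)$ respectively. The limit is therefore $F_1'(z_1)F_2(z_1)-F_1(z_1)F_2'(z_1)$, which is the claimed identity.

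No step here is a genuine obstacle; the only point requiring care is the sign bookkeeping when flipping $z_1-z_2$ to $z_2-z_1$. We also note that only differentiability at $z_1$ is used, so holomorphy in a neighbourhood of $z_1$ would suffice. This matters for the application in \eqref{eq:A1 before lemma}, where $\mathcal{G}_1$ and $\mathcal{G}_2$ are holomorphic near $0$ rather than literally entire. There we will apply the lemma with $F_1=\mathcal{G}_1$, $F_2=\mathcal{G}_2$, $z_1=\omega_1$, $z_2=\omega_2$, which removes the apparent singularity at $\omega_1=\omega_2$.
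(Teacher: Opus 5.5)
Your proof is correct. The paper omits the proof, calling the lemma ``rather straightforward'', and your add-and-subtract reduction to two single-variable difference quotients is the routine argument it has in mind. Your remark that only complex differentiability at $z_1$ is needed is also apt: in the application, $\mathcal{G}_1$ contains the factor $f_2(\omega)=\Gamma_g(\frac12-\omega)/\Gamma_g(\frac12+\omega)$ and is therefore only meromorphic, though holomorphic near $0$, whereas $\mathcal{G}_2=f_3(2\omega)$ is genuinely entire.
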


By using this lemma on the last line of \eqref{eq:A1 before lemma}, we obtain
    \begin{align*}
    \lim_{\omega_2\to\omega_1}\left(1-\frac{1}{p}\right)^{-1}\mathscr{F}_1(\omega_1,\omega_2)
    &=\frac{1}{\omega_1^2}\mathscr{G}_1(\omega_1),
    \end{align*}
where 
    \begin{align*}
    &\mathscr{G}_1(\omega_1)
    =\frac{h_0}{4}\Big\{\mathcal{G}'_1(\omega_1)\mathcal{G}_2(\omega_1)-\mathcal{G}_1(\omega_1)\mathcal{G}'_2(\omega_1)\Big\}
    -\frac{h_1}{2}\mathcal{G}_1(\omega_1)\mathcal{G}_2(\omega_1)
    \\&=\frac{h_0}{4}f_3(2\omega_1)
    \Big\{f'_1(\omega_1)f_2(\omega_1)f_3(-2\omega_1)+f_1(\omega_1)f'_2(\omega_1)f_{3}(-2\omega_1)-2f_1(\omega_1)f_2(\omega_1)f_{3}'(-2\omega_1)\Big\}
    \\&
    \hspace{1em}
    -\frac{h_0}{2}f_1(\omega_1)f_2(\omega_1)f_3(-2\omega_1)f'_3(2\omega_1)-\frac{h_1}{2}f_1(\omega_1)f_2(\omega_1)f_3(-2\omega_1)f_{3}(2\omega_1).
    \end{align*}
Moreover, from the definitions of $\mathcal{G}_1$ and $\mathcal{G}_2$, we find that
    \begin{align*}
    \lim_{\omega_2\to\omega_1}\left(1-\frac{1}{p}\right)^{-1}\mathscr{F}_2(\omega_1,\omega_2)
    =\frac{1}{\omega_1^3}\mathscr{G}_2(\omega_1),
    \end{align*}
where 
    \begin{align*}
    \mathscr{G}_2(\omega_1)
    &=-\frac{1}{8}f_1(2\omega_1)f^2_{2}(\omega_1)f^2_3(-2\omega_1)f_{4}(-2\omega_1)f_{5}(-2\omega_1)f_6(-2\omega_1)
    \\
    &\hspace{1em}+\frac{1}{8}f^2_{3}(2\omega_1)f_{4}(2\omega_1)f_5(2\omega_1)f_6(2\omega_1)\\
    &=-\frac{1}{8}f_1(2\omega_1)f^2_{2}(\omega_1)f^2_3(-2\omega_1)\bigg(h_0-2h_1 \omega_1+4h_2\omega_1^2-8h_3\omega_1^3+\sum_{j\geq4}h_j(-2\omega_1)^{j}\bigg)\\
    &\hspace{1em}+\frac{1}{8}f^2_{3}(2\omega_1)\Big(h_0+2h_1 \omega+4h_2\omega_1^2+8h_3\omega_1^3+\sum_{j\geq4}h_j(2\omega_1)^{j}\Big).
    \end{align*}
Observe that both $\mathscr{G}_1$ and $\mathscr{G}_2$ are entire functions in $\omega_1$. Using Maple, we verify that 
    \[
    \mathscr{G}_2(0)
    =\mathscr{G}_1(0)+\mathscr{G}_2'(0)
    =\mathscr{G}'_1(0)+\frac12\mathscr{G}_2''(0)=0.
    \] 
Hence,
    \[
    \lim_{\omega_1\to 0}\frac{1}{\omega_1^2}\mathscr{G}_1(\omega_1)
    +\frac{1}{\omega_1^3}\mathscr{G}_2(\omega_1)
    =\frac12\mathscr{G}_1''(0) +\frac16\mathscr{G}_2'''(0).
    \]
One can verify using Maple that $\frac12\mathscr{G}_1''(0)+\frac16\mathscr{G}_2'''(0)$ is a cubic polynomial in $\nu$ with leading coefficient
    \[
   \frac{2}{\pi^2} \Upsilon(1,p,1)\frac{(p-1)^2 \log^3{p}}{p^2}\mathrm{Res}_{s=0}L(1+s, g\otimes g).
    \]
The remaining coefficients were also computed in Maple, but we omit their explicit expressions here due to their length.


\end{document}